\pdfoutput=1
% RECOMMENDED %%%%%%%%%%%%%%%%%%%%%%%%%%%%%%%%%%%%%%%%%%%%%%%%%%%
\documentclass[11pt]{article}
\usepackage{authblk}
\usepackage[toc,page]{appendix}
\usepackage[top=2cm, bottom=2cm, left=2cm, right=2cm]{geometry}

% choose options for [] as required from the list
% in the Reference Guide

\usepackage{color}
\usepackage{helvet}         % selects\textbf{\textbf{•}} Helvetica as sans-serif font
\usepackage{courier}        % selects Courier as typewriter font
\usepackage{type1cm}        % activate if the above 3 fonts are
                            % not available on your system

\usepackage{framed}
\usepackage{tikz}
\usepackage{makeidx}         % allows index generation
\usepackage{graphicx}        % standard LaTeX graphics tool
                             % when including figure files
\usepackage{multicol}        % used for the two-column index
\usepackage[bottom]{footmisc}% places footnotes at page bottom

\usepackage{amsmath}
\usepackage{amssymb}
\usepackage{bbold}
\usepackage{amsthm}
\usepackage{subcaption}
\usepackage{sidecap}
\usepackage{floatrow}
\usepackage{pdflscape}
\usepackage{pdflscape}
\usepackage{comment}
\usepackage[font=small]{caption}
\usepackage{enumitem}
\usepackage{esint}

%\usepackage{yhmath}
%\usepackage{thmtools}
%\usepackage{thm-restate}
%\usepackage{setspace}

%Eve: added these packages
\usepackage{scalerel}

\newtheorem{theorem}{Theorem}
\newtheorem{corollary}[theorem]{Corollary}
\newtheorem{lemma}[theorem]{Lemma}
\newtheorem{conjecture}[theorem]{Conjecture}
\newtheorem{proposition}[theorem]{Proposition}
\newtheorem{proposition*}{Proposition}
\newtheorem{lemma*}{Lemma}

\theoremstyle{remark}

\newtheorem{remark}[theorem]{\bf Remark}
\newtheorem{definition}[theorem]{\bf Definition}

%these are related to numbering:
\numberwithin{theorem}{section}
%\numberwithin{corollary}{section}
%\numberwithin{condition}{section}
%\numberwithin{lemma}{section}
%\numberwithin{conjecture}{section}
%\numberwithin{proposition}{section}
\numberwithin{question}{section}
%\numberwithin{speculation}{section}
%\numberwithin{remark}{section}
\numberwithin{figure}{section}
\numberwithin{equation}{section}

% see the list of further useful packages
% in the Reference Guide

% see the list of further useful packages
% in the Reference Guide

%\makeindex             % used for the subject index
                       % please use the style svind.ist with
                       % your makeindex program

%%%%%%%%%%%%%%%%%%%%%%%%%%%%%%%%%%%%%%%%%%%%%%%%%%%%%%%%%%%%%%%%%%%%%%%%%%%%%%%%%%%%%%%%%

%%%%%%%%%%%%%%%%%%%%%%%%%%%%%%%%%%%%%%%%%%%%%%%%%%%%%%%%%%%%%%%%%%%%%%%%%%%%%%%%%%%%%%%%%

\begin{document}

\title{Radial BPZ equations and partition functions of FK-Ising interfaces conditional on one-arm event}
\bigskip{}
\author[1]{Yu Feng\thanks{yufeng\_proba@163.com}}
\author[1]{Hao Wu\thanks{hao.wu.proba@gmail.com. Supported by Beijing Natural Science Foundation (JQ20001).}}
\affil[1]{Tsinghua University, China}
\date{}
% Use \authorrunning{Short Title} for an abbreviated version of
% your contribution title if the original one is too long%

%

% Use the package "url.sty" to avoid
% problems with special characters
% used in your e-mail or web address
%

\maketitle

\begin{center}
\begin{minipage}{0.95\textwidth}
\abstract{
Radial BPZ equations come naturally when one solves Dub\'{e}dat's commutation relation in the radial setting. We construct positive solutions to radial BPZ equations and show that partition functions of FK-Ising interfaces in a polygon conditional on a one-arm event are positive solutions to radial BPZ equations. 
 }

\bigskip{}

\noindent\textbf{Keywords:} commutation relation, BPZ equations, random-cluster model \\ 

\noindent\textbf{MSC:} 60J67
\end{minipage}
\end{center}

%\tableofcontents
%\newpage
\global\long\def\w{\mathrm{w}}
\global\long\def\d{\mathrm{d}}
\global\long\def\f{\mathrm{f}}
\global\long\def\t{\theta}
\global\long\def\vt{\vartheta}
\global\long\def\CR{\mathrm{CR}}
\global\long\def\ST{\mathrm{ST}}
\global\long\def\SF{\mathrm{SF}}
\global\long\def\cov{\mathrm{cov}}
\global\long\def\dist{\mathrm{dist}}
\global\long\def\SLE{\mathrm{SLE}}
\global\long\def\hSLE{\mathrm{hSLE}}
\global\long\def\CLE{\mathrm{CLE}}
\global\long\def\GFF{\mathrm{GFF}}
\global\long\def\inte{\mathrm{int}}
\global\long\def\ext{\mathrm{ext}}
\global\long\def\inrad{\mathrm{inrad}}
\global\long\def\outrad{\mathrm{outrad}}
\global\long\def\dimH{\mathrm{dim}}
\global\long\def\capa{\mathrm{cap}}
\global\long\def\diam{\mathrm{diam}}
\global\long\def\free{\mathrm{free}}
\global\long\def\hF{{}_2\mathrm{F}_1}
\global\long\def\ghF{{}_3\mathrm{F}_2}
\global\long\def\simple{\mathrm{simple}}
\global\long\def\even{\mathrm{even}}
\global\long\def\odd{\mathrm{odd}}
\global\long\def\st{\mathrm{ST}}
\global\long\def\LZalphawired{\LZ^{(\mathfrak{r})}}
\global\long\def\LQalphawired{\LQ^{(\mathfrak{r},\mathrm{w})}}
%\global\long\def\ust{\mathrm{UST}}
\global\long\def\usf{\mathrm{USF}}
\global\long\def\Leb{\mathrm{Leb}}
\global\long\def\LP{\mathrm{LP}}
\global\long\def\coulomb{\LH}
\global\long\def\coulombnew{\LG}
\global\long\def\kfunc{p}
\global\long\def\OO{\mathcal{O}}
\global\long\def\Dist{\mathrm{Dist}}
\global\long\def\ball{D}
\global\long\def\cst{\mathrm{C}_{\kappa}^{(\mathfrak{r})}}

\global\long\def\eps{\epsilon}
\global\long\def\ov{\overline}
\global\long\def\U{\mathbb{U}}
\global\long\def\T{\mathbb{T}}
\global\long\def\HH{\mathbb{H}}
\global\long\def\LA{\mathcal{A}}
\global\long\def\LB{\mathcal{B}}
\global\long\def\LC{\mathcal{C}}
\global\long\def\LD{\mathcal{D}}
\global\long\def\LF{\mathcal{F}}
\global\long\def\LK{\mathcal{K}}
\global\long\def\LE{\mathcal{E}}
\global\long\def\LG{\mathcal{G}}
\global\long\def\LI{\mathcal{I}}
\global\long\def\LJ{\mathcal{J}}
\global\long\def\LL{\mathcal{L}}
\global\long\def\LM{\mathcal{M}}
\global\long\def\LN{\mathcal{N}}
\global\long\def\LQ{\mathcal{Q}}
\global\long\def\LR{\mathcal{R}}
\global\long\def\LT{\mathcal{T}}
\global\long\def\LS{\mathcal{S}}
\global\long\def\LU{\mathcal{U}}
\global\long\def\LV{\mathcal{V}}
\global\long\def\LW{\mathcal{W}}
\global\long\def\LX{\mathcal{X}}
\global\long\def\LY{\mathcal{Y}}
\global\long\def\LZ{\mathcal{Z}}
\global\long\def\LZtwo{\mathcal{Z}_{\includegraphics[scale=0.2]{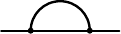}}}
\newcommand{\Etwo}{\mathbb{E}_{\includegraphics[scale=0.2]{figures/link-0}}}
\newcommand{\QQtwo}{\QQ_{\includegraphics[scale=0.2]{figures/link-0}}}
\newcommand{\LZtwor}{\LZtwo^{(\mathfrak{r})}}

\global\long\def\LZfoura{\mathcal{Z}_{\includegraphics[scale=0.2]{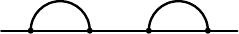}}}
\global\long\def\LZfouraone{\mathcal{Z}_{\includegraphics[scale=0.2]{figures/link-1};1}}
\global\long\def\LZfouratwo{\mathcal{Z}_{\includegraphics[scale=0.2]{figures/link-1};2}}
\global\long\def\LZfourathree{\mathcal{Z}_{\includegraphics[scale=0.2]{figures/link-1};3}}

\global\long\def\LZfourb{\mathcal{Z}_{\includegraphics[scale=0.2]{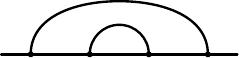}}}
\global\long\def\LZfourbfusionl{\mathcal{Z}_{\includegraphics[scale=0.6]{figures/link211}}}
\global\long\def\LZfourbfusionr{\mathcal{Z}_{\includegraphics[scale=0.6]{figures/link112}}}
\global\long\def\LEfourb{\mathbb{E}_{\includegraphics[scale=0.2]{figures/link-2}}}
\global\long\def\LPfourb{\mathbb{P}_{\includegraphics[scale=0.2]{figures/link-2}}}

\global\long\def\LEfourbfusionl{\mathbb{E}_{\includegraphics[scale=0.6]{figures/link211}}}
\global\long\def\LEfourbfusionr{\mathbb{E}_{\includegraphics[scale=0.6]{figures/link112}}}
\global\long\def\LEfourbfusion{\mathbb{E}_{\includegraphics[scale=0.5]{figures/link22}}}

\global\long\def\LZfourbone{\mathcal{Z}_{\includegraphics[scale=0.2]{figures/link-2};1}}
\global\long\def\LZfourbtwo{\mathcal{Z}_{\includegraphics[scale=0.2]{figures/link-2};2}}
\global\long\def\LZfourbthree{\mathcal{Z}_{\includegraphics[scale=0.2]{figures/link-2};3}}

\global\long\def\LFfoura{\mathcal{F}_{\includegraphics[scale=0.2]{figures/link-1}}}
\global\long\def\LFfourb{\mathcal{F}_{\includegraphics[scale=0.2]{figures/link-2}}}

\global\long\def\LZalphar{\mathcal{Z}_{\alpha}^{(\mathfrak{r})}}
\global\long\def\QQalphar{\mathbb{Q}_{\alpha}^{(\mathfrak{r})}}
\global\long\def\LZalpharwired{\mathcal{Z}_{\alpha; \mathrm{w}}^{(\mathfrak{r})}}
\global\long\def\QQalpharwired{\mathbb{Q}_{\alpha; \mathrm{w}}^{(\mathfrak{r})}}
\global\long\def\LH{\mathcal{H}}
\global\long\def\LJ{\mathcal{J}}
\global\long\def\R{\mathbb{R}}
\global\long\def\C{\mathbb{C}}
\global\long\def\N{\mathbb{N}}
\global\long\def\Z{\mathbb{Z}}
\global\long\def\E{\mathbb{E}}
\global\long\def\PP{\mathbb{P}}
\global\long\def\QQ{\mathbb{Q}}
\global\long\def\A{\mathbb{A}}
\global\long\def\one{\mathbb{1}}
\global\long\def\bn{\mathbf{n}}
\global\long\def\MR{MR}
\global\long\def\cond{\,|\,}
\global\long\def\la{\langle}
\global\long\def\ra{\rangle}
\global\long\def\tree{\Upsilon}
\global\long\def\prob{\mathbb{P}}
\global\long\def\hm{\mathrm{Hm}}
\global\long\def\cross{\mathrm{Cross}}

\global\long\def\sf{\mathrm{SF}}
\global\long\def\wr{\varrho}

%Eve: I like these notations better:
\global\long\def\Im{\operatorname{Im}}
\global\long\def\Re{\operatorname{Re}}
%\renewcommand{\Im}{\mathrm{Im}}
%\renewcommand{\Re}{\mathrm{Re}}

%Eves macros
\global\long\def\ud{\mathrm{d}}
\global\long\def\pder#1{\frac{\partial}{\partial#1}}
\global\long\def\pdder#1{\frac{\partial^{2}}{\partial#1^{2}}}
\global\long\def\der#1{\frac{\ud}{\ud#1}}

\global\long\def\bZnn{\mathbb{Z}_{\geq 0}}

\global\long\def\Vfunc{\LG}%{\LH}%{\LV}
\global\long\def\gfunc{g^{(\rr)}}
\global\long\def\hfunc{h^{(\rr)}}

\global\long\def\SimplexInt{\rho}
\global\long\def\CubeInt{\widetilde{\rho}}

\global\long\def\ii{\mathfrak{i}}
\global\long\def\ee{\mathrm{e}}
\global\long\def\rr{\mathfrak{r}}
\global\long\def\chamber{\mathfrak{X}}
\global\long\def\Wchamber{\mathfrak{W}}

\global\long\def\SimplexIntKappa8{\SimplexInt}

\global\long\def\nested{\boldsymbol{\underline{\Cap}}}
\global\long\def\unnested{\boldsymbol{\underline{\cap\cap}}}
%\global\long\def\unnested{\boldsymbol{\underline{\cap\scaleobj{0.85}{\,\cdots}\cap}}}
%\global\long\def\removeLink{/}
\global\long\def\unnested{\boldsymbol{\underline{\cap\cap}}}

\global\long\def\acycle{\vartheta}
\global\long\def\bcycle{\tilde{\acycle}}
\global\long\def\Gloop{\Theta}

\global\long\def\metric{\mathrm{dist}}

\global\long\def\adj#1{\mathrm{adj}(#1)}

\global\long\def\bs{\boldsymbol}

\global\long\def\edge#1#2{\langle #1,#2 \rangle}
\global\long\def\graph{G}

\newcommand{\conn}{\vartheta}
\newcommand{\hatconn}{\widehat{\vartheta}_{\mathrm{RCM}}}%{\LA}
\newcommand{\realpt}{\smash{\mathring{x}}}
\newcommand{\corrind}{\LC}
\newcommand{\bssymb}{\pi}%{\Pi}%{\zeta}
\newcommand{\PRCM}{\mu}%{\QQ}
\newcommand{\coeff}{p}
\newcommand{\MainConst}{C}

\global\long\def\removeLink{/}

\section{Introduction}

To describe the scaling limit of random interfaces in planar critical lattice models, Schramm realized that it is equivalent to classifying random planar curves with conformal invariance and domain Markov property. In a simply connected domain with two marked boundary points, these properties impose that the chordal Loewner driving function of such a random curve has to be a multiple of Brownian motion, and this gives the definition of chordal SLE~\cite{SchrammFirstSLE}. 

After classifying the random curves with conformal invariance and domain Markov property in a simply connected domain with two marked points on the boundary, it is natural to try to classify random curves with these properties in a simply connected domain with more marked points which correspond to the scaling limit of 
random interfaces in a polygon. We say that $(\Omega; x_1, \ldots, x_p)$ is a polygon if $\Omega\subsetneq\C$ is simply connected and $x_1, \ldots, x_p\in\partial\Omega$ are distinct points lying counterclockwise along the boundary. We assume that $\partial\Omega$ is locally connected. We say $(\Omega; x_1, \ldots, x_{p})$ is a nice polygon if the marked boundary points $x_1, \ldots, x_p$ lie on sufficiently regular boundary segments, e.g. $C^{1+\eps}$ for some $\eps>0$. The most often used polygon is the upper half-plane $(\HH; x_1, \ldots, x_p)$ with $x_1<\cdots<x_p$. Dub\'{e}dat analyzed random curves in polygons with conformal invariance, domain Markov property, and a technical requirement ``absolute continuity" in~\cite{DubedatCommutationSLE}. These properties give a commutation relation on the infinitesimal generators of the curves. In particular, such commutation relation results in a system of chordal Belavin-Polyakov-Zamolodchikov (BPZ) equations: for all $1\le j\le p$, 
	\begin{align}\label{eqn::chordalBPZ}
		\frac{\kappa}{2} \frac{\partial_j^2\LZ}{\LZ}
		+ \sum_{\ell\neq j} \left( \frac{2}{x_{\ell}-x_{j}}\frac{\partial_{\ell}\LZ}{\LZ}
		- \frac{2h}{(x_{\ell}-x_{j})^{2}} \right) =  0.
	\end{align}
To classify random curves in a polygon with conformal invariance and domain Markov property, one needs to understand positive solutions to chordal BPZ equations~\eqref{eqn::chordalBPZ}. Since~\cite{DubedatCommutationSLE}, there has been active research on the classification of solutions to chordal BPZ equations and on their relation to planar critical lattice models~\cite{GrahamSLE,LawlerPartitionFunctionsSLE,FloresKlebanPDE2,FloresKlebanPDE3,
KytolaPeltolaPurePartitionFunctions,FloresSimmonsKlebanZiffCrossingProbaPolygon,
IzyurovIsingMultiplyConnectedDomains,
PeltolaWuGlobalMultipleSLEs,WuHyperSLE,FSKconnectivityweights,IzyurovMultipleFKIsing,
AngHoldenSunYu2023,PeltolaWuCrossingProbaIsing,SunYuMutipleSLE2023,
LiuPeltolaWuUST,FengPeltolaWuConnectionProbaFKIsing,FengLiuPeltolaWu2024}.

In contrast to the chordal setting, commutation relation in the radial setting is less explored.  We say $(\Omega; x_1, \ldots, x_p; z)$ is a polygon with an interior point if $(\Omega; x_1, \ldots, x_p)$ is a polygon and $z\in\Omega$. 
The most often used polygon with an interior point is the unit disc $(\U; \exp(\ii\theta_1), \ldots, \exp(\ii\theta_p); 0)$ with 
$\bs{\theta}=(\theta_1, \ldots, \theta_p)\in \LX_p$ where 
\[\LX_p=\{\bs{\theta}=(\theta_1, \ldots, \theta_p)\in\R^p: \theta_1<\theta_2<\cdots<\theta_p<\theta_1+2\pi\}.\]
In~\cite{DubedatCommutationSLE}, Dub\'{e}dat also derived commutation relation in such radial setting. In particular, the commutation relation in the radial setting also gives a system of radial BPZ equations~\cite{DubedatCommutationSLE, WangWuCommutationRelation, ZhangMultipleRadialSLE0}: there exists a constant $\aleph\in\R$, for all $1\le j\le p$, 
\begin{align}\label{eqn::radialBPZ}	
	\frac{\kappa}{2}\frac{\partial_{j}^2\LZ}{\LZ}+\sum_{\ell\neq j}\left(\cot((\theta_{\ell}-\theta_j)/2)\frac{\partial_{\ell}\LZ}{\LZ}-\frac{(6-\kappa)/(4\kappa)}{\left(\sin((\theta_{\ell}-\theta_j)/2)\right)^2}\right)=\aleph. 
	\end{align}
Different from the chordal setting, the system of radial BPZ equations~\eqref{eqn::radialBPZ} has one more degree of freedom on the choice of the constant $\aleph$ on the right-hand side. It is an interesting question to classify solutions to radial BPZ equations~\eqref{eqn::radialBPZ} and to understand the role of the constant $\aleph$. 
The authors in~\cite{HealeyLawlerNSidedRadialSLE} analyzed multi-sided radial $\SLE_{\kappa}$ whose partition function is a solution to the system of radial BPZ equations~\eqref{eqn::radialBPZ} with $\aleph=\frac{1-N^2}{2\kappa}$. 
The authors in~\cite{WangWuCommutationRelation} studied commutation relation and found all solutions to the system of radial BPZ equations~\eqref{eqn::radialBPZ} with $N=1$ and $\aleph\in\R$. 
The author in~\cite{ZhangMultipleRadialSLE0} analyzed solutions to the semi-classical limit ($\kappa=0$) of radial BPZ equations~\eqref{eqn::radialBPZ}. See~\cite{CardyCalogeroSutherlandModel,DoyonCardyCalogeroSutherland,FSKZ11,FKZ12} for other results about radial BPZ equations.

As we mentioned above, commutation relation comes naturally when one investigates the scaling limit of planar critical lattice models. Thus, solutions to radial BPZ equations should have a connection to planar critical lattice models. This is the focus of this article. 
It turns out that the scaling limit of interfaces of planar critical random-cluster model conditional on one-arm event corresponds to a partition function which is a positive solution to radial BPZ equations~\eqref{eqn::radialBPZ} with a specific constant $\aleph$ that we describe below.  

\subsection{Random-cluster model}
We fix a polygon with an interior point $(\Omega; x_1, \ldots, x_{2N}; z)$. Suppose $(\Omega^{\delta}; x_1^{\delta}, \ldots, x_{2N}^{\delta}; z^{\delta})$ is a sequence of discrete domains on $\delta\Z^2$ converges to $(\Omega; x_1, \ldots, x_{2N}; z)$ in the close-Carath\'{e}odory sense (see Definition~\ref{def:closeCara}).  
We denote by 
\[\PP^{\delta}_{\Omega}=\PP^{\delta}_{(\Omega; x_1, \ldots, x_{2N})}\]
the law of critical random-cluster model in $(\Omega^{\delta}; x_1^{\delta}, \ldots, x_{2N}^{\delta})$ with cluster-weight $q\in [1,4)$ and alternating boundary condition: 
\begin{equation}\label{eqn::bc}
(x_{2j-1}^{\delta}x_{2j}^{\delta})\text{ is wired},\qquad \text{for all }j\in\{1,\ldots, N\},
\end{equation}
and these $N$ wired arcs are not wired outside of $\Omega^{\delta}$ (see details in Section~\ref{subsec::pre_rcm}). 
Under such boundary condition, there exist $N$ interfaces $(\eta^{(1,\delta)},\ldots,\eta^{(N,\delta)})$ on the medial graph $\Omega^{\delta,\diamond}$ connecting the marked points $\{x_1^{\delta, \diamond}, \ldots, x_{2N}^{\delta, \diamond}\}$ pairwise. The goal of this article is to derive the scaling limit of these interfaces conditional on the following one-arm event: 
\begin{equation}\label{eqn::onearm_event_def}
\LA^{\delta}=\LA^{\delta}(\Omega^{\delta}; x_1^{\delta}, \ldots, x_{2N}^{\delta}; z^{\delta})=\{\exists \text{ an open path connecting }z^{\delta}\text{ to }\cup_{j=1}^N(x_{2j-1}^{\delta}x_{2j}^{\delta})\}.
\end{equation}

\begin{conjecture} \label{conj::cvg_interfaces}
Fix a polygon with an interior point $(\Omega; x_1, \ldots, x_{2N}; z)$ and suppose a sequence of medial domains $(\Omega^{\delta, \diamond}; x_1^{\delta, \diamond}, \ldots, x_{2N}^{\delta, \diamond}; z^{\delta, \diamond})$ converges to $(\Omega; x_1, \ldots, x_{2N}; z)$ in the close-Carath\'{e}odory sense. Consider critical random-cluster model on the primal domain $(\Omega^{\delta}; x_1^{\delta}, \ldots, x_{2N}^{\delta})$ with alternating boundary condition~\eqref{eqn::bc}. 
The cluster-weight $q$ and parameter $\kappa$ are related through
\begin{equation}\label{eqn::qkappa}
q=4\cos^2(4\pi/\kappa)\in [1,4), \quad \kappa\in (4,6]. 
\end{equation}
Let $\eta_j^{\delta}$ be the interface starting from $x_j^{\delta,\diamond}$. Let $\varphi$ be any conformal map from $\Omega$ onto $\U$ such that $\varphi(z)=0$ and denote $\exp(\ii\theta_j)=\varphi(x_j)$ for $j\in\{1, \ldots, 2N\}$ such that $\theta_1<\cdots<\theta_{2N}<\theta_1+2\pi$. Then the law of $\eta_j^{\delta}$ conditional on the one-arm event $\LA^{\delta}$ in~\eqref{eqn::onearm_event_def} converges weakly under the topology induced by $\dist$ in~\eqref{eq::curve_metric} to the image under $\varphi^{-1}$ of the radial Loewner chain with the following driving function, up to the first time either $\varphi(x_{j-1})$ or $\varphi(x_{j+1})$ is disconnected from the origin:
\begin{align} \label{eqn::driving_function_FK_conditional}
\begin{cases}
\ud \xi_t=\sqrt{\kappa}\ud B_t+\kappa(\partial_j\log \LG^{(\mathfrak{r}_1)})(V_t^{(1)}, \ldots, V_t^{(j-1)}, \xi_t, V_t^{(j+1)}, \ldots, V_t^{(2N)})\ud t,\quad \xi_0=\theta_j;\\
\ud V_t^{(i)}=\cot((V_t^{(i)}-\xi_t)/2)\ud t, \quad V_0^{(i)}=\theta_i, \quad\text{for }i\in\{1, \ldots, j-1, j+1, \ldots, 2N\},
\end{cases}
\end{align}
where $\mathfrak{r}_1=\mathfrak{r}_1(\kappa)$ is the one-arm exponent for conformal loop ensemble~\cite{SchrammSheffieldWilsonConformalRadii}:
\[\mathfrak{r}_1(\kappa):=\frac{(3\kappa-8)(8-\kappa)}{32\kappa},\]
the partition function $\LG^{(\mathfrak{r}_1)}$ is defined in Definition~\ref{def::partition_random_cluster} and is a solution to the system of radial BPZ equations~\eqref{eqn::radialBPZ} with $\aleph=\frac{16-\kappa^2}{32\kappa}$: for all $1\le j\le 2N$, 
\begin{align}\label{eqn::radialBPZ_onearm}	
	\frac{\kappa}{2}\frac{\partial_{j}^2\LG^{(\mathfrak{r}_1)}}{\LG^{(\mathfrak{r}_1)}}+\sum_{\ell\neq j}\left(\cot((\theta_{\ell}-\theta_j)/2)\frac{\partial_{\ell}\LG^{(\mathfrak{r}_1)}}{\LG^{(\mathfrak{r}_1)}}-\frac{(6-\kappa)/(4\kappa)}{\left(\sin((\theta_{\ell}-\theta_j)/2)\right)^2}\right)=\frac{16-\kappa^2}{32\kappa}. 
	\end{align}
\end{conjecture}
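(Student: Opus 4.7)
My plan is to approach Conjecture~\ref{conj::cvg_interfaces} in three stages: first establish tightness of the conditional interface laws, then identify subsequential limits as radial Loewner chains, and finally compute their driving functions via a martingale/Girsanov argument using $\LG^{(\mathfrak{r}_1)}$ as a Radon--Nikodym derivative.

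For tightness, I would invoke the RSW-type crossing estimates available for critical random-cluster models with $q\in[1,4)$, together with the Kemppainen--Smirnov framework, to conclude that the family $\{\eta_j^{\delta}\}$ is tight under the unconditional measure $\PP^{\delta}_{\Omega}$ in the topology induced by $\dist$. Since $\PP^{\delta}_{\Omega}[\,\cdot\mid\LA^{\delta}]$ is obtained from $\PP^{\delta}_{\Omega}$ by multiplication by a non-negative Radon--Nikodym derivative that is integrable against the unconditional law of $\eta_j^{\delta}$ up to any stopping time at macroscopic distance from $z$, tightness of the conditional laws follows once one has the sharp asymptotic $\PP^{\delta}_{\Omega}[\LA^{\delta}]\asymp \delta^{2\mathfrak{r}_1}$ with constant depending only on $(\Omega;x_1,\ldots,x_{2N};z)$.

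The main content is then a martingale argument. Writing $\LZ$ for the unconditional interface partition function, the natural candidate martingale is
\[
M^{\delta}_{\tau} \;=\; \frac{\PP^{\delta}_{\Omega}[\LA^{\delta}\mid \eta_j^{\delta}[0,\tau]]}{\PP^{\delta}_{\Omega}[\LA^{\delta}]},
\]
and the target is to show that $M^{\delta}_{\tau}$ converges, after appropriate renormalization, to $(\LG^{(\mathfrak{r}_1)}/\LZ)(V_{\tau}^{(1)},\ldots,\xi_{\tau},\ldots,V_{\tau}^{(2N)})$ up to a normalizing constant, with $\xi_{\tau}$ placed in position $j$. For $q=2$, this convergence-of-observables statement should be attacked via Smirnov's discrete fermionic observable combined with Beffara--Duminil-Copin arm estimates and mixing near $z^{\delta}$. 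Granted such convergence, a Girsanov tilt of the unconditional driving SDE, whose drift is $\kappa\,\partial_j\log\LZ$, by the ratio $\LG^{(\mathfrak{r}_1)}/\LZ$ adds $\kappa\,\partial_j\log(\LG^{(\mathfrak{r}_1)}/\LZ)$, so the total drift becomes $\kappa\,\partial_j\log\LG^{(\mathfrak{r}_1)}$, matching~\eqref{eqn::driving_function_FK_conditional}. The PDE~\eqref{eqn::radialBPZ_onearm} with constant $\aleph=(16-\kappa^2)/(32\kappa)$ is then equivalent to the local martingality of $\LG^{(\mathfrak{r}_1)}$ under the radial Loewner SDE driving any one of the $2N$ interfaces, with the specific value of $\aleph$ forced by the conformal weight $\mathfrak{r}_1$ attached to the interior marked point~$z$.

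The hard part will be the observable-level convergence of the renormalized one-arm probability $M^{\delta}_{\tau}$ to an explicit limit. A cleaner workaround is to characterize $\LG^{(\mathfrak{r}_1)}$ first as the unique (up to positive scaling) positive solution to~\eqref{eqn::radialBPZ_onearm} with prescribed boundary behaviour near collisions $\theta_{\ell}\to\theta_{\ell\pm 1}$ and near the origin, and then show that every subsequential limit of $M^{\delta}_{\tau}\cdot\LZ$ is positive and solves that PDE, forcing it to be a multiple of $\LG^{(\mathfrak{r}_1)}$. For general $q\in[1,4)$, the absence of a discrete holomorphic observable and of a precise one-arm asymptotics remains the fundamental obstruction, and one should not expect the full conjecture to be accessible outside the FK-Ising regime with current techniques.
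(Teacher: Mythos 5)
What you propose is a plan for a statement the paper does \emph{not} prove: Conjecture~\ref{conj::cvg_interfaces} is stated as a conjecture, and only the FK-Ising case $q=2$, $\kappa=16/3$ is established (Theorem~\ref{thm::FKIsing_cvg}). You correctly acknowledge this at the end. Judged as a strategy for the $q=2$ case, your route is genuinely different from the paper's.

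You propose the classical discrete-observable method: set $M_{\tau}^{\delta}=\PP_{\Omega}^{\delta}[\LA^{\delta}\mid\eta_j^{\delta}[0,\tau]]/\PP_{\Omega}^{\delta}[\LA^{\delta}]$, prove it converges (via Smirnov's fermionic observable and arm estimates) to $\LG^{(\mathfrak{r}_1)}/\LZ$ at the Loewner configuration, and apply Girsanov; or alternatively characterize $\LG^{(\mathfrak{r}_1)}$ as the unique positive solution to~\eqref{eqn::radialBPZ_onearm} with prescribed collision asymptotics and match a subsequential limit against it. The paper avoids constructing any discrete martingale observable for the conditioned curve and does not invoke a uniqueness theorem for radial BPZ solutions (no such theorem is available for $N>1$). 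Instead it assembles the result globally: (i) the unconditional convergence of the discrete multi-interface collection to global $N$-$\SLE_{16/3}$, together with the link-pattern probabilities, imported as black boxes from~\cite{BeffaraPeltolaWuUniqueness,FengPeltolaWuConnectionProbaFKIsing} (Propositions~\ref{prop::cvg_global_interfaces} and~\ref{prop::cvg_proba}); (ii) the convergence of the normalized one-arm probability to $\CR(\Omega;z)^{-1/8}$ (Lemma~\ref{lem::limit_one_point}), proved via the CLE-convergence result (Proposition~\ref{prop::CLE}) and an inward coupling (Lemma~\ref{lem::inward_coupling}) \emph{rather than} the fermionic observable, precisely so the argument has a chance to port to other $q\in[1,4)$; (iii) the domain Markov decomposition~\eqref{eqn::discrete_one_arm_decom}, which passes the conditional law through to a $\CR$-weighted global multiple SLE; and (iv) a purely continuum martingale computation (Lemma~\ref{lem::LZalpharwired}) to read off the marginal driving SDE. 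Your Girsanov heuristic agrees with step (iv) in the continuum, but the paper identifies the conditional limit without ever proving convergence of your $M_{\tau}^{\delta}$.

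Two concrete slips. First, the discrete one-arm probability satisfies $\PP_{\Omega}^{\delta}[\LA^{\delta}]\asymp\delta^{\mathfrak{r}_1}$, not $\delta^{2\mathfrak{r}_1}$: for $\kappa=16/3$, $\mathfrak{r}_1=1/8$, matching~\eqref{eqn::asy_one_arm}. Second, tightness of the conditional curve laws does not follow from integrability of a Radon--Nikodym derivative alone; the paper's Lemma~\ref{lem::FKIsing_tightness} exploits that the one-arm event is increasing and that FKG holds, so the Kemppainen--Smirnov crossing bounds survive conditioning.
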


The partition function $\LG^{(\mathfrak{r}_1)}$ in Conjecture~\ref{conj::cvg_interfaces} is not explicit in general, but it has a simple explicit expression when $N=1$: up to a multiplicative constant, 
\begin{equation}
\LG^{(\mathfrak{r}_1)}(\theta_1, \theta_2)=\left(\sin\left(\left(\theta_2-\theta_1\right)/2\right)\right)^{1-6/\kappa}\left(\sin\left(\left(\theta_2-\theta_1\right)/4\right)\right)^{8/\kappa-1}.
\end{equation}

\begin{theorem}\label{thm::FKIsing_cvg}
Conjecture~\ref{conj::cvg_interfaces} holds for FK-Ising model $q=2$ with $\kappa=16/3$. 
\end{theorem}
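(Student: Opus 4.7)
The strategy is to reduce the conditional convergence to the (known) unconditional scaling limit of FK-Ising interfaces via a Girsanov / Doob $h$-transform argument, with $h=\LG^{(\mathfrak{r}_1)}$ playing the role of the tilting martingale. Once $\LG^{(\mathfrak{r}_1)}$ is shown (in the partition-function part of the paper) to solve the radial BPZ system~\eqref{eqn::radialBPZ_onearm}, the process
\[
M_t \;:=\; \LG^{(\mathfrak{r}_1)}\bigl(V_t^{(1)},\ldots,\xi_t,\ldots,V_t^{(2N)}\bigr)
\]
is, by It\^o's formula, a local martingale for the unconditional radial driving SDE of $\eta_j^\delta$ transported to the disc via $\varphi$, and weighting by $M_t$ produces exactly the drift $\kappa\,\partial_j\log\LG^{(\mathfrak{r}_1)}$ appearing in~\eqref{eqn::driving_function_FK_conditional}.

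First, I would establish tightness of $\{\eta_j^\delta\mid \LA^\delta\}_{\delta>0}$ in $\dist$. The Kemppainen--Smirnov condition (C2) for the unconditional FK-Ising interface is known thanks to the uniform RSW estimates of Duminil-Copin--Sidoravicius--Tassion for $q\in[1,4)$. Since $\PP_\Omega^\delta[\LA^\delta]\asymp r^{2\mathfrak{r}_1}$ for a macroscopic conformal radius $r$ of $\Omega$ seen from $z$ (known FK-Ising one-arm exponent), conditioning on $\LA^\delta$ inflates annulus-crossing probabilities by at most $\PP_\Omega^\delta[\LA^\delta]^{-1}$, which is absorbed by the stronger polynomial decay in (C2) as soon as the annulus stays bounded away from $z$ and is small. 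Near $z$, quasi-multiplicativity of arm events combined with the single-arm decay gives the required control. This yields tightness of the conditional laws and subsequential convergence both of the curves and of their radial Loewner driving data.

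Second, for a stopping time $\tau$ before $\varphi(x_{j\pm 1})$ disconnects from the origin, I would introduce the discrete Radon--Nikodym derivative
\[
M_\tau^\delta \;:=\; \frac{\PP_\Omega^\delta[\LA^\delta\mid \LF_\tau^\delta]}{\PP_\Omega^\delta[\LA^\delta]},
\]
a nonnegative $(\LF_\tau^\delta)$-martingale under $\PP_\Omega^\delta$ by the FK domain Markov property, and by construction the density of the conditional law with respect to the unconditional one on $\LF_\tau^\delta$. Assuming the convergence $M_\tau^\delta\to M_\tau/M_0$ in a mode strong enough to apply dominated convergence, Girsanov's theorem applied to the unconditional radial driving SDE (whose drift is known in terms of the pure multi-interface partition function) yields the stated driving function~\eqref{eqn::driving_function_FK_conditional}: all pure-partition-function contributions combine with the tilting logarithmic derivative of $M$ into $\kappa\partial_j\log\LG^{(\mathfrak{r}_1)}$, precisely because the function $\LZ_{\text{pure}}\cdot h=\LG^{(\mathfrak{r}_1)}$ is the conditional partition function. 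SDE uniqueness in the regular region before $\tau$ then upgrades subsequential to full convergence of the driving functions, and the standard radial Loewner machinery together with a priori regularity of the curves promotes this to convergence in $\dist$.

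The main obstacle is the convergence $M_\tau^\delta\to M_\tau/M_0$. This requires proving that for discrete polygons $(\Omega^\delta;x_1^\delta,\ldots,x_{2N}^\delta;z^\delta)$ converging in the close-Carath\'eodory sense, the normalized one-arm probability $\PP_\Omega^\delta[\LA^\delta]/c^\delta$ converges to $\LG^{(\mathfrak{r}_1)}(\bs\theta)$ \emph{uniformly} as the polygon degenerates along a radial Loewner slit (equivalently, after some portion of $\eta_j^\delta$ has been revealed and the residual domain is the complement of a hull in $\Omega^\delta$). For FK-Ising this rests on quasi-multiplicativity and uniform convergence of arm-event probabilities (available via Smirnov's fermionic observable, discrete harmonic-measure estimates, and RSW), combined with continuity of $\LG^{(\mathfrak{r}_1)}$ in the boundary data under close-Carath\'eodory convergence (established in the partition-function part of the paper). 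The conditioning-to-Girsanov reduction is the conceptual heart of the proof; the uniform control of the normalized one-arm probability across the Loewner flow is the technical heart.
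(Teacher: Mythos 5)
The paper takes a different, \emph{global} route: it first uses the unconditional scaling-limit inputs (Propositions~\ref{prop::cvg_global_interfaces} and~\ref{prop::cvg_proba}) to identify the weak limit of the \emph{entire} conditioned interface collection as a linear combination of global $N$-$\SLE_{16/3}$'s weighted by $\one\{\LW(\bs\eta;z)\}\CR(\Omega\setminus\bs\eta;z)^{-1/8}$ (Corollary~\ref{coro::limit_one_point} and claim~\eqref{eqn::FKIsing_cvg_aux1}), and then reads off the marginal driving SDE from the abstract martingale characterization of that weighted measure (Lemma~\ref{lem::LZalpharwired}). This only requires the normalized one-arm asymptotics of Lemma~\ref{lem::limit_one_point} at a \emph{single} $\delta\to 0$ limit in a fixed (non-degenerating) domain. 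Your proposal instead tilts the law of a single interface by tracking the Radon--Nikodym density $M_\tau^\delta=\PP^\delta_\Omega[\LA^\delta\mid\LF_\tau^\delta]/\PP^\delta_\Omega[\LA^\delta]$ along the Loewner flow and converting it to a Girsanov drift in the continuum. This is a genuinely different decomposition: you do the conditioning ``locally along the curve,'' whereas the paper does it ``globally once.''

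The decisive gap you yourself flag is real and is precisely what the paper's route is designed to avoid: your step $M_\tau^\delta\to M_\tau/M_0$ is not a soft consequence of tightness but requires that the normalized one-arm probability for the \emph{residual} domain $\Omega^\delta\setminus\eta^\delta_{[0,\tau]}$ (a slit, rough-boundary domain developing along the exploration) converges \emph{uniformly over the Loewner flow} to the continuum quantity; the Karrila-type close-Carath\'eodory machinery and Lemma~\ref{lem::limit_one_point} only deliver this for a fixed target domain, not uniformly over slit domains parameterized by $\tau$. That uniform control was neither established in the paper nor is it in the cited literature; merely listing RSW, quasi-multiplicativity and fermionic observables is not a proof sketch for it. Without it, you cannot pass to the limit in the discrete martingale argument, and the whole Girsanov reduction stalls. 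Two smaller defects: (i) the process $M_t$ as you wrote it, $\LG^{(\mathfrak{r}_1)}(V_t^{(1)},\ldots,\xi_t,\ldots,V_t^{(2N)})$ with no conformal-derivative prefactors, is \emph{not} a local martingale — compare~\eqref{eqn::mart_multipleSLE_CR_wired}, which carries the factors $g_t'(0)^{\mathfrak{r}-\tilde h}\prod_j\phi_t'(\theta_j)^h$; and the correct density of conditional over unconditional is $g_t'(0)^{\mathfrak{r}_1}\,\bigl(\LG^{(\mathfrak{r}_1)}/\LF\bigr)(\xi_t,\ldots)\big/\bigl(\LG^{(\mathfrak{r}_1)}/\LF\bigr)(\bs\theta)$, involving the \emph{ratio} to the unconditional FK partition function $\LF$ of Proposition~\ref{prop::cvg_proba}, not $\LG^{(\mathfrak{r}_1)}$ alone; (ii) the exponent in ``$\PP^\delta_\Omega[\LA^\delta]\asymp r^{2\mathfrak{r}_1}$'' is off — the macroscopic dependence on the conformal radius is $r^{-\mathfrak{r}_1}=r^{-1/8}$ via~\eqref{eqn::limit_one_point}.
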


The proof of Theorem~\ref{thm::FKIsing_cvg} relies on three inputs: 
1st. a general construction of positive solutions to radial BPZ equations in Proposition~\ref{prop::solutions_evenr2}; 
2nd. the asymptotic analysis of probabilities of one-arm events of the FK-Ising model in Lemma~\ref{lem::limit_one_point} and 
3rd. the convergence of FK-Ising interfaces without conditioning~\cite{BeffaraPeltolaWuUniqueness, FengPeltolaWuConnectionProbaFKIsing}. 
We will explain the construction of positive solutions in Section~\ref{subsec::intro_solutions} and then explain the strategy of the proof of Theorem~\ref{thm::FKIsing_cvg} in Section~\ref{subsec::intro_strategy}. 

\subsection{Positive solutions for radial BPZ equations}
\label{subsec::intro_solutions}
In this section, we construct positive solutions to radial BPZ equations~\eqref{eqn::radialBPZ} using global multiple $\SLE_{\kappa}$. To this end, we first introduce global multiple SLEs.

For $\kappa\in (0,8)$, a chordal $\SLE_{\kappa}$ 
is a random continuous non-self-crossing curve in a simply connected domain $\Omega$ connecting two prime ends $x_1, x_2$ on the boundary $\partial\Omega$. We call such process chordal $\SLE_{\kappa}$ in $(\Omega; x_1, x_2)$ for short.
Suppose $(\Omega; x_1, \ldots, x_{2N})$ is a polygon.
We consider curves $(\eta^{(1)}, \ldots, \eta^{(N)})$ in $\overline{\Omega}$ each of which connects two distinct points among $\{x_1, x_2, \ldots, x_{2N}\}$ in such a way that they do not cross each other. The curves $(\eta^{(1)}, \ldots, \eta^{(N)})$ can have various planar connectivity patterns, which we describe in terms of planar link patterns
$\alpha=\{\{a_1, b_1\}, \ldots, \{a_N, b_N\}\}$ where $\{a_1, b_1, \ldots, a_N, b_N\}=\{1, 2, \ldots, 2N\}$. For convenience, we choose the following ordering:
\begin{equation}\label{eqn::linkpattern_order}
a_s<b_s,\text{ for all }s\in \{1, \ldots, N\}, \quad\text{and }a_1<a_2<\cdots<a_N. 
\end{equation}
We denote by $\LP_N$ the set of such planar link patterns. 
Note that $\#\LP_N$ is given by $N$:th Catalan number $\frac{1}{N+1}\binom{2N}{N}$. 
For each $\alpha=\{\{a_1, b_1\}, \ldots, \{a_N, b_N\}\}\in\LP_N$, we denote by $\chamber_{\alpha}(\Omega; x_1, \ldots, x_{2N})$ the collection of curves $(\eta^{(1)}, \ldots, \eta^{(N)})$ such that, for each $j\in\{1, \ldots, N\}$, the curve $\eta^{(j)}$ is a continuous non-self-crossing curves in $\Omega$ connecting $x_{a_j}$ and $x_{b_j}$ and $\eta^{(j)}$ does not disconnect any two points $x_{a_s}$ and $x_{b_s}$ for $s\neq j$. 
\begin{definition}
Global $N$-$\SLE_{\kappa}$ associated to the link pattern $\alpha\in\LP_N$ in the polygon $(\Omega; x_1, \ldots, x_{2N})$ is the unique probability measure on $\chamber_{\alpha}(\Omega; x_1, \ldots, x_{2N})$ with resampling property: for each $j\in\{1, \ldots, N\}$, the conditional law of the curve $\eta_j$ given $\{\eta^{(1)}, \ldots, \eta^{(N)}\}\setminus\{\eta^{(j)}\}$ is chordal $\SLE_{\kappa}$ connecting $x_{a_j}$ and $x_{b_j}$ in the connected component of the domain $\Omega\setminus\cup_{s\neq j}\eta^{(s)}$ having the end points $x_{a_j}$ and $x_{b_j}$ on its boundary.  
We denote its law by 
\[\QQ_{\alpha}=\QQ_{\alpha}(\Omega; x_1, \ldots, x_{2N}).\]
\end{definition}
There is extensive literature about the existence and uniqueness of global $N$-$\SLE_{\kappa}$ for different ranges of $\kappa$: \cite{KozdronLawlerMultipleSLEs, MillerSheffieldIG1, MillerSheffieldIG2, PeltolaWuGlobalMultipleSLEs, WuHyperSLE, BeffaraPeltolaWuUniqueness, AngHoldenSunYu2023, ZhanExistenceUniquenessMultipleSLE, FengLiuPeltolaWu2024}. 
They guarantee the existence and uniqueness of global $N$-$\SLE_{\kappa}$ for $\kappa\in (0,8)$. 
Its law is encoded by pure partition functions (see Section~\ref{subsec::pre_ppf}) 
\[\LZ_{\alpha}(\Omega; x_1, \ldots, x_{2N}).\]
In particular, when $\Omega=\HH$, they are solutions to chordal BPZ equations~\eqref{eqn::chordalBPZ}. 
%In particular, for $(\theta_1, \ldots, \theta_{2N})\in\LX_{2N}$, if we write 
%$\LZ_{\alpha}(\theta_1, \ldots, \theta_{2N})=\LZ_{\alpha}(\U; \exp(\ii\theta_1), \ldots, \exp(\ii\theta_{2N}))$,
%then $\LZ_{\alpha}: \LX_{2N}\to \R_{>0}$ satisfies the system of radial BPZ equations~\eqref{eqn::radialBPZ} with
%$\aleph=\frac{(6-\kappa)(\kappa-2)}{8\kappa}$. 

\begin{proposition}\label{prop::solutions_evenr2}
Fix $N\ge 1, \alpha\in\LP_N$ and a polygon with an interior point $(\Omega; x_1, \ldots, x_{2N}; z)$. Fix $\kappa\in (0,6]$ and suppose $(\eta^{(1)}, \ldots, \eta^{(N)})\sim \QQ_{\alpha}$ is the global $N$-$\SLE_{\kappa}$ associated to $\alpha$ in $(\Omega; x_1, \ldots, x_{2N})$. 
We write $\bs{\eta}=\cup_{j=1}^N\eta^{(j)}$, 
and denote by $\CR\left(\Omega\setminus\bs{\eta}; z\right)$ the conformal radius of the connected component of $\Omega\setminus\bs{\eta}$ containing $z$. For $\mathfrak{r}\in\R$, we define 
\begin{equation}\label{eqn::solutions_evenr2}
\LZalphar(\Omega; x_1, \ldots, x_{2N}; z)=\LZ_{\alpha}(\Omega; x_1, \ldots, x_{2N})\E_{\alpha}\left[\CR\left(\Omega\setminus\bs{\eta}; z\right)^{-\mathfrak{r}}\right]. 
\end{equation}
Then the expectation in~\eqref{eqn::solutions_evenr2} is finite when $\mathfrak{r}<1-\kappa/8$. 
Furthermore, for $(\theta_1, \ldots, \theta_{2N})\in\LX_{2N}$, if we write 
\begin{equation}\label{eqn::LZalphar_radialcoordinates}
\LZalphar(\theta_1, \ldots, \theta_{2N})=\LZalphar(\U; \exp(\ii\theta_1), \ldots, \exp(\ii\theta_{2N});0),
\end{equation}
then $\LZalphar: \LX_{2N}\to \R_{>0}$ satisfies the system of radial BPZ equations~\eqref{eqn::radialBPZ} with
$\aleph=\frac{(6-\kappa)(\kappa-2)}{8\kappa}-\mathfrak{r}>-\frac{3}{2\kappa}$: for all $1\le j\le 2N$, 
\begin{equation}\label{eqn::radialBPZ_evenr2}
	\frac{\kappa}{2}\frac{\partial_{j}^2\LZalphar}{\LZalphar}+\sum_{\ell\neq j}\left(\cot((\theta_{\ell}-\theta_j)/2)\frac{\partial_{\ell}\LZalphar}{\LZalphar}-\frac{(6-\kappa)/(4\kappa)}{\left(\sin((\theta_{\ell}-\theta_j)/2)\right)^2}\right)=\frac{(6-\kappa)(\kappa-2)}{8\kappa}-\mathfrak{r}. 
\end{equation}
\end{proposition}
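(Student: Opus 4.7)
The strategy has three pieces: finiteness of the moment, the construction of a radial-Loewner martingale via conformal covariance of $\CR$, and an It\^o-plus-BPZ argument to extract the PDE.

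\emph{Step 1 (finiteness).} Pass to $(\U;e^{\ii\theta_1},\ldots,e^{\ii\theta_{2N}};0)$ and set $\CR:=\CR(\U\setminus\bs{\eta};0)$. A layer-cake decomposition reduces the finiteness claim to the tail bound
\[\QQ_\alpha[\CR<\varepsilon]\;\lesssim\;\varepsilon^{1-\kappa/8},\]
which in turn follows from Beffara's one-point estimate for chordal $\SLE_\kappa$ (the probability of entering a ball of radius $\varepsilon$ around an interior point is $\asymp \varepsilon^{1-\kappa/8}$), transferred to the $\QQ_\alpha$-setting via a bounded Radon--Nikodym comparison between the marginal of a single curve under $\QQ_\alpha$ and a chordal $\SLE_\kappa$ (away from the endpoints), combined with Koebe's $1/4$ theorem to pass from hitting radius to conformal radius.

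\emph{Step 2 (martingale).} Fix $j\in\{1,\ldots,2N\}$ and let $j^*$ index the curve with endpoint $e^{\ii\theta_j}$. Parametrize $\eta^{(j^*)}$ by radial capacity with respect to $0$, with driving function $\xi_t$ and images $V_t^{(i)}$ of the other angles. Writing $\tilde\LZ_\alpha(\bs\theta):=\LZ_\alpha(\U;e^{\ii\theta_1},\ldots,e^{\ii\theta_{2N}})$ for the radial pure partition function, standard arguments relating multiple-SLE marginals to pure partition functions (see e.g.~\cite{PeltolaWuGlobalMultipleSLEs, FengLiuPeltolaWu2024}) give
\begin{equation*}
\ud\xi_t=\sqrt{\kappa}\,\ud B_t+\kappa\,\partial_j\log\tilde\LZ_\alpha(V_t^{(1)},\ldots,\xi_t,\ldots,V_t^{(2N)})\,\ud t,\quad \ud V_t^{(i)}=\cot\bigl((V_t^{(i)}-\xi_t)/2\bigr)\,\ud t.
\end{equation*}
Conditionally on $\eta^{(j^*)}|_{[0,t]}$, iterated resampling together with the chordal domain Markov property identifies the remainder of $\bs{\eta}$, pushed forward by the radial Loewner map $g_t$ (which fixes $0$ with $g_t'(0)=e^t$), as a global $N$-$\SLE_\kappa$ in $\U$ with link pattern $\alpha$ and marked points $V_t^{(1)},\ldots,\xi_t,\ldots,V_t^{(2N)}$. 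Since $\CR$ at $0$ transforms by the factor $|g_t'(0)|^{-1}=e^{-t}$,
\begin{equation*}
\E_{\QQ_\alpha}\bigl[\CR^{-\mathfrak{r}}\cond \LF_t\bigr]=e^{\mathfrak{r} t}\,\frac{\LZalphar}{\tilde\LZ_\alpha}\bigl(V_t^{(1)},\ldots,\xi_t,\ldots,V_t^{(2N)}\bigr),
\end{equation*}
which is an $L^1$-martingale by Step 1.

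\emph{Step 3 (It\^o and conclusion).} Applying It\^o's formula to the martingale of Step 2 and setting the drift to zero produces a linear PDE for $F:=\LZalphar/\tilde\LZ_\alpha$ involving $\partial_j^2 F$, the cross-term $\kappa(\partial_j\log\tilde\LZ_\alpha)(\partial_j F)$, the ODE drifts $\cot((\theta_\ell-\theta_j)/2)\,\partial_\ell F$, and the zeroth-order $\mathfrak{r} F$ from the exponential prefactor. Separately, $\tilde\LZ_\alpha$ itself satisfies the radial BPZ equation~\eqref{eqn::radialBPZ} with the ``base'' constant $\aleph_0=\frac{(6-\kappa)(\kappa-2)}{8\kappa}$: this is derived from the chordal BPZ~\eqref{eqn::chordalBPZ} on $\HH$ via a M\"obius conformal covariance with boundary weight $h=(6-\kappa)/(2\kappa)$ and a routine trigonometric computation of the Jacobian cross-terms (checkable directly in the explicit $N=1$ case $\tilde\LZ_\alpha\propto(\sin((\theta_2-\theta_1)/2))^{-2h}$, and available in~\cite{WangWuCommutationRelation}). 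Substituting $\LZalphar=\tilde\LZ_\alpha F$ and adding the two PDEs produces exactly~\eqref{eqn::radialBPZ_evenr2} with $\aleph=\aleph_0-\mathfrak{r}$; positivity is immediate from $\LZ_\alpha>0$ and $\CR>0$.

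The main obstacle is Step 2: the radial-Loewner SDE for $\eta^{(j^*)}$ under $\QQ_\alpha$ must be established rigorously (combining the resampling characterization, existence/uniqueness of global multiple $\SLE_\kappa$, and conformal covariance of the pure partition function), and the martingale identity must be controlled up to hitting times when two of the points $V_t^{(i)},\xi_t$ collide. The It\^o calculation of Step 3 and the comparison with the un-weighted radial BPZ for $\tilde\LZ_\alpha$ are then essentially mechanical.
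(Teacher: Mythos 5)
Your proposal is correct and follows essentially the same route as the paper: it splits the claim into finiteness (layer-cake plus a tail bound for $\CR$ obtained from a single-curve estimate transferred via a bounded Radon--Nikodym comparison, bounded by the power-law bound and Poisson-kernel monotonicity), a domain-Markov/resampling identification of the conditional expectation as $e^{\mathfrak{r}t}\LZalphar/\LZ_\alpha$ along the radial Loewner flow, and an It\^o computation forcing the drift to vanish. The only cosmetic differences are that the paper derives the tail estimate from the Euler hypergeometric ODE of~\cite{WangWuCommutationRelation} rather than invoking Beffara's one-point estimate directly, and that it applies It\^o to $M_t(\LZalphar)$ in one step rather than factoring out $F=\LZalphar/\LZ_\alpha$ and adding the two PDEs --- an algebraically equivalent reorganization; the paper also records a hypoellipticity argument to upgrade the weak solution to a smooth one, a technicality your proposal leaves implicit.
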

Proposition~\ref{prop::solutions_evenr2} with $N=1$ is proved in~\cite{WangWuCommutationRelation}. If we set $\mathfrak{r}=0$, then $\LZalphar$ in Proposition~\ref{prop::solutions_evenr2} becomes pure partition function $\LZ_{\alpha}$ and radial BPZ equations~\eqref{eqn::radialBPZ} with $\aleph=\frac{(6-\kappa)(\kappa-2)}{8\kappa}$ is equivalent to the chordal BPZ equations~\eqref{eqn::chordalBPZ} written in radial coordinates~\eqref{eqn::LZalphar_radialcoordinates}. 

\subsection{Relation between Theorem~\ref{thm::FKIsing_cvg} and Proposition~\ref{prop::solutions_evenr2}}
\label{subsec::intro_strategy}
For the FK-Ising model, we have the following asymptotic behavior of probabilities of one-arm events.
\begin{lemma} \label{lem::limit_one_point}
Fix a bounded simply connected domain $\Omega$  and suppose that a sequence of admissible medial domains $\Omega^{\delta,\diamond}$  converges to $\Omega$ in the Carath\'{e}odory sense (see details in Section~\ref{subsec::pre_rcm}). Suppose that $z^{\delta}\to z$ as $\delta\to 0$. Consider the critical FK-Ising model on the primal domain $\Omega^{\delta}$ with the wired boundary condition. Then we have
	\begin{equation} \label{eqn::limit_one_point}
\lim_{\delta\to 0 }\frac{\mathbb{P}^{\delta}_{\Omega,\mathrm{w}}[
	\exists\text{ an open path connecting } z^{\delta} \text{ to }\partial\Omega^{\delta}]}{\mathbb{P}^{\delta}_{\mathbb{U},\mathrm{w}}[\exists\text{ an open path connecting } 0 \text{ to }\partial\mathbb{U}^{\delta}]}= \CR(\Omega;z)^{-1/8}.
	\end{equation}
\end{lemma}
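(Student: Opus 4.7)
My approach rests on two ingredients: the Edwards--Sokal coupling between the wired FK-Ising measure and the plus-boundary Ising measure, and the established conformally covariant scaling limit of the one-point Ising magnetization in bounded simply connected domains.

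The first step is to rewrite the FK-Ising one-arm probability as an Ising spin expectation. Under the Edwards--Sokal coupling, the wired boundary condition for FK-Ising corresponds to plus boundary condition for Ising: the unique wired cluster is forced to spin $+1$, while every other cluster is independently assigned $\pm 1$ with equal probability. Taking the expectation of $\sigma_{z^{\delta}}$ and using the $\pm 1$ symmetry on the non-wired clusters yields the identity
\[
\mathbb{E}^{\delta,+}_{\Omega}[\sigma_{z^{\delta}}] \;=\; \mathbb{P}^{\delta}_{\Omega,\mathrm{w}}[\exists\text{ an open path connecting }z^{\delta}\text{ to }\partial\Omega^{\delta}],
\]
and similarly with $(\Omega,z^{\delta})$ replaced by $(\mathbb{U},0)$. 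Thus the ratio in~\eqref{eqn::limit_one_point} is precisely a ratio of Ising one-point functions with plus boundary conditions.

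The second step is to invoke the scaling limit of the Ising magnetization with plus boundary condition (due to Chelkak--Hongler--Izyurov, building on earlier half-plane results of Hongler--Smirnov): there exists a universal constant $\mathcal{C}>0$ such that, for any bounded simply connected $\Omega$ with admissible medial discretizations $\Omega^{\delta,\diamond}\to\Omega$ in the Carathéodory sense and $z^{\delta}\to z\in\Omega$,
\[
\lim_{\delta\to 0}\delta^{-1/8}\,\mathbb{E}^{\delta,+}_{\Omega}[\sigma_{z^{\delta}}] \;=\; \mathcal{C}\cdot\mathrm{CR}(\Omega;z)^{-1/8}.
\]
The exponent $1/8$ is the bulk scaling dimension of the Ising spin field, and the factor $\mathrm{CR}(\Omega;z)^{-1/8}$ is the conformal covariance of the continuum one-point function under the map $\varphi:\Omega\to\mathbb{U}$ with $\varphi(z)=0$, for which $|\varphi'(z)|=\mathrm{CR}(\Omega;z)^{-1}$.

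Applying this convergence separately in numerator and denominator of~\eqref{eqn::limit_one_point}, the universal prefactor $\mathcal{C}\,\delta^{1/8}$ cancels, leaving
\[
\lim_{\delta\to 0}\frac{\mathbb{P}^{\delta}_{\Omega,\mathrm{w}}[\exists\text{ open path from }z^{\delta}\text{ to }\partial\Omega^{\delta}]}{\mathbb{P}^{\delta}_{\mathbb{U},\mathrm{w}}[\exists\text{ open path from }0\text{ to }\partial\mathbb{U}^{\delta}]} \;=\; \frac{\mathrm{CR}(\Omega;z)^{-1/8}}{\mathrm{CR}(\mathbb{U};0)^{-1/8}} \;=\; \mathrm{CR}(\Omega;z)^{-1/8},
\]
since $\mathrm{CR}(\mathbb{U};0)=1$. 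The only nontrivial input is the scaling-limit theorem for the Ising magnetization; the remaining steps are the Edwards--Sokal identity and a ratio computation. The main possible technical obstacle is matching the hypotheses of the cited convergence with the Carathéodory-convergence framework used here; should the input require slightly stronger boundary regularity, one can first apply it on a regular approximating subdomain and then appeal to uniform FK-Ising RSW estimates at $q=2$ to extend to the general admissible discretization.
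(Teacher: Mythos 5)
Your proof is correct, and in fact the paper itself signals this route: immediately after stating the lemma in Section~1.3, the authors remark that the stronger asymptotic
\(\mathbb{P}^{\delta}_{\Omega,\mathrm{w}}[z^{\delta}\leftrightarrow\partial\Omega^{\delta}]=C\,\delta^{1/8}\,\CR(\Omega;z)^{-1/8}(1+o(1))\)
follows from the Edwards--Sokal coupling together with~\cite[Theorem~1.2]{ChelkakHonglerIzyurovConformalInvarianceCorrelationIsing}, which is exactly your argument. However, the paper deliberately does \emph{not} prove the lemma this way. The proof given in Appendix~A is genuinely different: it first establishes an inward coupling (Lemma~A.1) showing that conditional on the arm reaching a small ball $B_{a}(z)$, the configuration seen from $z$ becomes insensitive to the outer domain, so the conditional one-arm probability ratio tends to $1$; it then identifies the outer factor as a CLE$_{16/3}$ event (via the full-scaling-limit convergence in Proposition~A.2), and finally extracts the exponent $1/8$ and the conformal-radius covariance by a telescoping/Ces\`aro argument using scale invariance of CLE and the cited SSW one-arm estimate. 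What your approach buys is a sharper, cleaner statement (actual asymptotics with an explicit normalization, in one stroke); what the paper's approach buys is that it relies only on (i) CLE-convergence and (ii) RSW/FKG, and so is the argument that could, in principle, extend to other cluster weights $q\in[1,4)$ where no analogue of the Chelkak--Hongler--Izyurov magnetization theorem is available -- this motivation is spelled out in the paper's discussion around~\eqref{eqn::asy_one_arm} and in Remark~\ref{rem::rcm}. One small point worth being careful about in your write-up: you should verify that the class of ``admissible medial domains converging in the Carath\'eodory sense'' used here indeed satisfies the hypotheses of the CHI theorem; you flag this yourself at the end, and your proposed mitigation (interior approximation plus RSW) is reasonable, but it is precisely the kind of technicality the paper's self-contained CLE-based proof sidesteps.
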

Indeed, it follows from~\cite[Theorem~1.2]{ChelkakHonglerIzyurovConformalInvarianceCorrelationIsing} and the Edwards-Sokal coupling that
\begin{equation} \label{eqn::asy_one_arm}
	\mathbb{P}^{\delta}_{\Omega,\mathrm{w}} [\exists\text{ an open path connecting } z^{\delta} \text{ to }\partial\Omega^{\delta}]=C\delta^{1/8}\CR(\Omega;z)^{-1/8}(1+o(1)),\quad \text{as }\delta\to 0, 
\end{equation} 
for some $C\in (0,\infty)$, which is stronger than Lemma~\ref{lem::limit_one_point}. We will give an alternative proof of Lemma~\ref{lem::limit_one_point} in Appendix~\ref{app::technical} based on the $\mathrm{CLE}$-convergence result stated in Proposition~\ref{prop::CLE} for the following two reasons: first, the weaker result Lemma~\ref{lem::limit_one_point} is sufficient for us to prove Theorem~\ref{thm::FKIsing_cvg}; second, while generalizing the arguments in~\cite{ChelkakHonglerIzyurovConformalInvarianceCorrelationIsing} to other critical models seems to be out of reach\footnote{One does not have an analogue of~\eqref{eqn::asy_one_arm} for percolation, for example.}, the recent groundbreaking work~\cite{DuminilCopinKozlowskiKrachunManolescuOulamaraRotationInvariance} suggests that a proof of the $\mathrm{CLE}$-convergence for critical random-cluster models with $q\in [1,4)$ other than $2$ may be more realistic. 

Let us explain the reason for radial BPZ equations~\eqref{eqn::radialBPZ_onearm} in Theorem~\ref{thm::FKIsing_cvg}. Assume the same setup as in Theorem~\ref{thm::FKIsing_cvg}, without conditioning, the law of the scaling limit $\bs{\eta}=(\eta^{(1)}, \ldots, \eta^{(N)})$ of the collection of interfaces $(\eta^{(1, \delta)}, \ldots, \eta^{(N, \delta)})$ is a linear combination of global $N$-$\SLE_{16/3}$ (due to~\cite{BeffaraPeltolaWuUniqueness} and~\cite{FengPeltolaWuConnectionProbaFKIsing}, see details in Section~\ref{subsec::con_inv_FK}). Given the collection of interfaces, the conditional probability of the one-arm event $\LA^{\delta}$ in~\eqref{eqn::onearm_event_def}, after proper normalization as in Lemma~\ref{lem::limit_one_point}, converges to $\CR(\Omega\setminus\bs{\eta}; z)^{-1/8}$. Therefore, the scaling limit of
the collection of interfaces $(\eta^{(1, \delta)}, \ldots, \eta^{(N, \delta)})$ is a linear combination of global $N$-$\SLE_{16/3}$ weighted by $\CR(\Omega\setminus\bs{\eta}; z)^{-1/8}$ (see details in Section~\ref{subsec::FKIsing_proof}). Combining with Proposition~\ref{prop::solutions_evenr2}, we find that the corresponding partition function $\LG^{(\mathfrak{r}_1)}$ satisfies radial BPZ equations~\eqref{eqn::radialBPZ} with 
\[\aleph=\frac{(6-\kappa)(\kappa-2)}{8\kappa}-\mathfrak{r}_1=\frac{16-\kappa^2}{32\kappa}\]
as in~\eqref{eqn::radialBPZ_onearm} for $\kappa=16/3$. 

\begin{remark}\label{rem::rcm}
Our proof for Theorem~\ref{thm::FKIsing_cvg} relies on three inputs: Proposition~\ref{prop::solutions_evenr2}, Lemma~\ref{lem::limit_one_point} and previous results of the convergence of the collection of FK-Ising interfaces (without conditioning). Conjecture~\ref{conj::cvg_interfaces} can be proved using the same strategy as long as one knows the convergence of a single interface to SLE and the convergence of the collection of loops to CLE. In particular, Conjecture~\ref{conj::cvg_interfaces} holds for Bernoulli site percolation on the triangular lattice with $\kappa=6$, as the convergence to $\SLE_6$ and $\CLE_6$ are known~\cite{SmirnovPercolationConformalInvariance, LawlerSchrammWernerOneArmExponent, CamiaNewmanPercolation, CamiaNewmanPercolationFull}. 
\end{remark}

\paragraph*{Outline.}
We will prove Proposition~\ref{prop::solutions_evenr2} in Section~\ref{sec::solutions}, prove Theorem~\ref{thm::FKIsing_cvg} in Section~\ref{sec::FKIsing} and prove Lemma~\ref{lem::limit_one_point} in Appendix~\ref{app::technical}. 

\paragraph*{Acknowledgments.}
We thank Federico Camia and Yilin Wang for helpful discussions. 
H.W. is partly affiliated with Yanqi Lake Beijing Institute of Mathematical Sciences and Applications, Beijing, China.

\section{Global multiple SLEs and proof of Proposition~\ref{prop::solutions_evenr2}}
\label{sec::solutions}
Fix parameters
\begin{equation}\label{eqn::htildeh_def}
\kappa\in (0,6],\qquad h=\frac{6-\kappa}{2\kappa},\qquad\tilde{h}=\frac{(6-\kappa)(\kappa-2)}{8\kappa}. 
\end{equation}
The goal of this section is to prove Proposition~\ref{prop::solutions_evenr2}. To this end, we first introduce the Poisson kernel and notations with radial Loewner chain. 

\paragraph*{Poisson kernel.}
(Boundary) Poisson kernel $H(\Omega; x, y)$ is defined for nice Dobrushin domain $(\Omega; x, y)$. When $\Omega=\HH$, we have
\begin{equation*}%\label{eqn::Poisson_def_H}
H(\HH; x,y)=|y-x|^{-2},\quad x,y\in\R. 
\end{equation*}
For nice Dobrushin domain $(\Omega; x, y)$, we extend its definition via conformal covariance: 
\begin{equation*}%\label{eqn::Poisson_def_general}
H(\Omega; x, y)=|\varphi'(x)\varphi'(y)|H(\HH; \varphi(x), \varphi(y)), 
\end{equation*}
where $\varphi$ is any conformal map from $\Omega\to \HH$.
When $\Omega=\U$, we have
\begin{equation*}
H(\U; \exp(\ii\theta_1), \exp(\ii\theta_2))=\left(2\sin((\theta_2-\theta_1)/2)\right)^{-2}.
\end{equation*}

Poisson kernel satisfies the following monotonicity.
Let $(\Omega; x, y)$ be a nice Dobrushin domain and let $U\subset\Omega$ be a simply connected domain that agrees with $\Omega$ in neighborhoods of $x$ and of $y$. Then we have
\begin{equation}\label{eqn::Poisson_mono}
H(U; x, y)\le H(\Omega; x, y).
\end{equation}

\paragraph*{Radial Loewner chain.}
%\label{subsubsec::pre_radialLoewnerchain}
Suppose $K\subset\U$ is a compact set such that $\U\setminus K$ is simply connected and contains the origin. Let $\varphi$ be the conformal map from $\U\setminus K$ onto $\U$ with $\varphi(0)=0$ and $\varphi'(0)>0$. 
The capacity of $K$ is $\log \varphi'(0)=-\log\CR(\U\setminus K;0)$. 

Fix $\theta\in [0,2\pi)$. Suppose $\eta:[0,T]\to \overline{\U}$ is a continuous non-self-crossing curve such that $\eta_0=\exp(\ii \theta)$. Let $U_t$ be the connected component of $\U\setminus \eta_{[0,t]}$ containing the origin. Let $g_t:U_t \to \U$ be the unique conformal map with $g_t(0)=0$ and $g'_t(0)>0$. We say that the curve is parameterized by capacity if $g'_t(0)=\exp(t)$. Then $g_t$ satisfies the radial Loewner chain:
\begin{equation*}
	\partial_t g_t(z)=g_t(z) \frac{\exp(\ii \xi_t)+g_t(z)}{\exp(\ii \xi_t)-g_t(z)}, \quad g_0(z)=z,
\end{equation*}
where $t\mapsto \xi_t \in \R$ is continuous and called the driving function of $\eta$. 
Radial $\SLE_{\kappa}$ is the radial Loewner chain with driving function $\xi_t=\sqrt{\kappa}B_t$ where $(B_t, t\ge 0)$ is one-dimensional Brownian motion. We will call it radial $\SLE_{\kappa}$ in $(\U; \exp(\ii\theta); 0)$ for short. Radial SLE in a general domain is defined via conformal image. 

Let $\phi_t$ be the covering map of $g_t$, i.e., the continuous function such that $g_t(\exp(\ii \theta))=\exp(\ii \phi_t(\theta))$ and $\phi_0(\theta)=\theta$, we have
$\partial_t \phi_t(\theta) =\cot \left( \left( \phi_t(\theta)-\xi_t \right)/2 \right)$.
In the following, we will have multiple marked points. For $(\theta_1, \ldots,\theta_p)\in\LX_p$, suppose $\eta:[0,T]\to \overline{\U}$ is a continuous non-self-crossing curve such that $\eta_0=\exp(\ii \theta_1)$. Then the evolution $V_t^{(j)}$ of the marked point $\theta_j$  is the same as $\phi_t(\theta_j)$ for $j\in \{2, \ldots, p\}$.

\medbreak
The proof of Proposition~\ref{prop::solutions_evenr2} is split into the following two lemmas. 
\begin{lemma}\label{lem::finite_expectation}
Assume the same notations as in Proposition~\ref{prop::solutions_evenr2}. The expectation in~\eqref{eqn::solutions_evenr2} is finite. Moreover, we have the following upper bound: there exists a constant $\cst\in (0,\infty)$ depending on $\kappa$ and $\mathfrak{r}$ such that 
\begin{equation}\label{eqn::LZalphar_upperbound}
\LZalphar(\Omega; x_1, \ldots, x_{2N}; z)\le N \cst \CR(\Omega; z)^{-\mathfrak{r}}\prod_{\{a,b\}\in\alpha} H(\Omega; x_a, x_b)^{h}. 
\end{equation}
We denote by 
\[\QQalphar=\QQalphar(\Omega; x_1, \ldots, x_{2N}; z)\]
the probability measure of $(\eta^{(1)}, \ldots, \eta^{(N)})\sim\QQ_{\alpha}$ global $N$-$\SLE_{\kappa}$ associated to $\alpha$ in polygon $(\Omega; x_1, \ldots, x_{2N})$ weighted by 
\begin{equation*}
\frac{\LZ_{\alpha}(\Omega; x_1, \ldots, x_{2N})}{\LZalphar(\Omega; x_1, \ldots, x_{2N}; z)}\CR(\Omega\setminus\bs{\eta}; z)^{-\mathfrak{r}}. 
\end{equation*}
\end{lemma}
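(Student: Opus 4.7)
The plan is to reduce the $N$-curve conformal-radius moment estimate to its one-curve case, which is the $N=1$ instance of Proposition~\ref{prop::solutions_evenr2} already handled in~\cite{WangWuCommutationRelation}, and to combine it with the standard maximum bound on pure partition functions. Since
\[
\LZalphar(\Omega;x_1,\ldots,x_{2N};z)=\LZ_{\alpha}(\Omega;x_1,\ldots,x_{2N})\,\E_{\alpha}\bigl[\CR(\Omega\setminus\bs{\eta};z)^{-\mathfrak{r}}\bigr],
\]
the bound~\eqref{eqn::LZalphar_upperbound} factorizes into (i) the maximum bound $\LZ_{\alpha}(\Omega;x_1,\ldots,x_{2N})\le\prod_{\{a,b\}\in\alpha}H(\Omega;x_a,x_b)^{h}$, which I would obtain by iterating the resampling property of global $N$-$\SLE_{\kappa}$ together with the Poisson-kernel monotonicity~\eqref{eqn::Poisson_mono}, and (ii) the moment estimate $\E_{\alpha}[\CR(\Omega\setminus\bs{\eta};z)^{-\mathfrak{r}}]\le N\,\cst\,\CR(\Omega;z)^{-\mathfrak{r}}$, valid for $\mathfrak{r}<1-\kappa/8$.

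For (ii), first apply Koebe's $1/4$ theorem pointwise. Writing $U_j=U_j(\bs{\eta})$ for the connected component of $\Omega\setminus\eta^{(j)}$ containing $z$, the boundary of the component of $\Omega\setminus\bs{\eta}$ containing $z$ lies in $\partial\Omega\cup\bigcup_{j}\eta^{(j)}$, so Koebe applied to $\Omega$ and to each $U_j$ yields, for $\mathfrak{r}>0$,
\[
\CR(\Omega\setminus\bs{\eta};z)^{-\mathfrak{r}}\le 4^{\mathfrak{r}}\,\CR(\Omega;z)^{-\mathfrak{r}}+4^{\mathfrak{r}}\sum_{j=1}^{N}\CR(U_j;z)^{-\mathfrak{r}};
\]
the case $\mathfrak{r}\le 0$ follows directly from the monotonicity $\CR(\Omega\setminus\bs{\eta};z)\le\CR(\Omega;z)$. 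It then remains to bound each $\E_{\alpha}[\CR(U_j;z)^{-\mathfrak{r}}]$ by $\cst\,\CR(\Omega;z)^{-\mathfrak{r}}$, which I would establish by induction on $N$: peeling off one outermost curve $\eta^{(j)}$, its marginal law under $\QQ_{\alpha}$ is absolutely continuous with respect to chordal $\SLE_{\kappa}$ from $x_{a_j}$ to $x_{b_j}$ in $\Omega$ with a Radon--Nikodym density given by a ratio of pure partition functions, uniformly bounded via step~(i); the base case $N=1$ is then exactly the result of~\cite{WangWuCommutationRelation}, which supplies both the finite constant $\cst$ and the range $\mathfrak{r}<1-\kappa/8$. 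Summing the $N$ terms produces the factor $N$.

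Once the upper bound is established, the expectation is finite and $\LZalphar$ is strictly positive, so $\QQalphar$ is well defined as the tilt of $\QQ_{\alpha}$ by the positive integrable density $\CR(\Omega\setminus\bs{\eta};z)^{-\mathfrak{r}}\LZ_{\alpha}/\LZalphar$, with total mass $1$ by the very definition of $\LZalphar$. I expect the main obstacle to be the Radon--Nikodym comparison in the induction step: a direct attempt to condition on $\bs{\eta}\setminus\{\eta^{(j)}\}$ and apply the $N=1$ bound only produces $\cst\,\CR(D_j;z)^{-\mathfrak{r}}$, where the component $D_j\subset\Omega$ of $\Omega\setminus\bigcup_{s\ne j}\eta^{(s)}$ hosting $\eta^{(j)}$ is random with conformal radius at $z$ not bounded below, so Fubini alone does not close the argument. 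Revealing curves from the outside and quantifying the uniform comparison between the one-curve marginal of global $N$-$\SLE_{\kappa}$ and chordal $\SLE_{\kappa}$ at the level of partition functions is the cleanest route I see.
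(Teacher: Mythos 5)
Your proposal is correct and follows essentially the same route as the paper: Koebe's quarter theorem reduces the $N$-curve conformal-radius moment to $N$ single-curve ones, the marginal law of $\eta^{(j)}$ under $\QQ_\alpha$ is compared against chordal $\SLE_\kappa$ via a bounded Radon--Nikodym density, and the one-curve conformal-radius moment from~\cite{WangWuCommutationRelation} (Lemma~\ref{lem::SLE_CR_expectation}) finishes. The ``main obstacle'' you flag is exactly what the paper's Lemma~\ref{lem::multipleSLEvschordalSLE} (quoted from~\cite[Section~6]{WuHyperSLE}) supplies: the density
$\LZtwo(\Omega;x_{a_j},x_{b_j})\,\LZ_{\alpha/\{a_j,b_j\}}(\Omega\setminus\eta;\ldots)\,\one\{\LE_\alpha(\eta)\}/\LZ_\alpha(\Omega;x_1,\ldots,x_{2N})$,
which by the power-law bound~\eqref{eqn::PPF_PLB_polygon} and Poisson kernel monotonicity~\eqref{eqn::Poisson_mono} is bounded above by $\prod_{\{a,b\}\in\alpha}H(\Omega;x_a,x_b)^{h}/\LZ_\alpha$, uniformly in $\eta$ and with no induction on $N$ required. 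The only minor divergence is bookkeeping: the paper first estimates the tail $\QQ_\alpha[\CR(\U\setminus\bs\eta)\le\eps]$ via Corollary~\ref{cor::CR_estimate} and then converts to a moment bound by a dyadic sum, whereas you invoke the one-curve moment bound of Lemma~\ref{lem::SLE_CR_expectation} directly; the ingredients are identical and your variant is slightly more concise.
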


\begin{lemma}\label{lem::marginal_LZalphar}
Assume the same notations as in Proposition~\ref{prop::solutions_evenr2} and in Lemma~\ref{lem::finite_expectation}. 
\begin{itemize}
\item The function $\LZalphar: \LX_{2N}\to \R_{>0}$ in~\eqref{eqn::LZalphar_radialcoordinates} satisfies the system of radial BPZ equations~\eqref{eqn::radialBPZ_evenr2}. 
\item For $(\theta_1, \ldots, \theta_{2N})\in\LX_{2N}$, suppose $(\eta^{(1)}, \ldots, \eta^{(N)})\sim\QQalphar$ in $(\U; \exp(\ii\theta_1), \ldots, \exp(\ii\theta_{2N}); 0)$. Then the law of $\eta^{(1)}$ under $\QQalphar$ is the same as radial $\SLE_{\kappa}$ in $(\U; \exp(\ii\theta_1); 0)$ weighted by the following local martingale, up to the first time $\exp(\ii\theta_{2})$ or $\exp(\ii\theta_{2N})$ is disconnected from the origin: 
\begin{equation}\label{eqn::mart_multipleSLE_CR}
M_t(\LZalphar)=g_t'(0)^{\mathfrak{r}-\tilde{h}}\times\prod_{j=2}^{2N}\phi_t'(\theta_j)^h\times\LZalphar(\xi_t, \phi_t(\theta_2), \ldots, \phi_t(\theta_{2N})). 
\end{equation}  
\end{itemize}
\end{lemma}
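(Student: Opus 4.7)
The plan is to first establish the second bullet (the marginal description of $\eta^{(1)}$ under $\QQalphar$), and then deduce the radial BPZ equations~\eqref{eqn::radialBPZ_evenr2} by applying Itô's formula to the resulting local martingale $M_t(\LZalphar)$. I would begin by invoking the pure-partition-function marginal result (see e.g.~\cite{PeltolaWuGlobalMultipleSLEs, WangWuCommutationRelation}): under $\QQ_\alpha$ in $(\U;\exp(\ii\theta_1),\ldots,\exp(\ii\theta_{2N}))$, the law of $\eta^{(1)}$ parameterized by radial capacity from $0$ is absolutely continuous with respect to radial $\SLE_\kappa$ in $(\U;\exp(\ii\theta_1);0)$, with Radon-Nikodym derivative on $\LF_{t\wedge\tau}$ equal to $M_t(\LZ_\alpha)/M_0(\LZ_\alpha)$, i.e.\ the $\mathfrak{r}=0$ specialization of~\eqref{eqn::mart_multipleSLE_CR}. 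The prefactor $g_t'(0)^{-\tilde h}$ here encodes the interior marked point $0$ in the radial-chordal Loewner comparison.

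Next, by the resampling property of $\QQ_\alpha$ and the conformal invariance of its law, conditionally on $\LF_t$ the pushed-forward configuration $g_t(\bs{\eta}\setminus\eta^{(1)}_{[0,t]})$ is distributed as global $N$-$\SLE_\kappa$ with pattern $\alpha$ in $(\U;\exp(\ii\xi_t),\exp(\ii\phi_t(\theta_2)),\ldots,\exp(\ii\phi_t(\theta_{2N})))$. Combined with the scaling $\CR(\U\setminus\bs{\eta};0)=g_t'(0)^{-1}\CR(\U\setminus g_t(\bs{\eta}\setminus\eta^{(1)}_{[0,t]});0)$ and the definition~\eqref{eqn::solutions_evenr2} of $\LZalphar$, this yields
\begin{equation*}
\E_{\QQ_\alpha}\!\left[\CR(\U\setminus\bs{\eta};0)^{-\mathfrak{r}}\cond\LF_t\right]=g_t'(0)^{\mathfrak{r}}\cdot\frac{\LZalphar(\xi_t,\phi_t(\theta_2),\ldots,\phi_t(\theta_{2N}))}{\LZ_\alpha(\xi_t,\phi_t(\theta_2),\ldots,\phi_t(\theta_{2N}))}.
\end{equation*}
Dividing by its initial value $\LZalphar(\theta_1,\ldots,\theta_{2N})/\LZ_\alpha(\theta_1,\ldots,\theta_{2N})$ gives the Radon-Nikodym derivative $d\QQalphar|_{\LF_t}/d\QQ_\alpha|_{\LF_t}$. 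Multiplying this by the pure-partition-function derivative $M_t(\LZ_\alpha)/M_0(\LZ_\alpha)$ causes all $\LZ_\alpha$ factors to cancel, producing exactly $M_t(\LZalphar)/M_0(\LZalphar)$; this establishes the second bullet. In particular, $M_t(\LZalphar)$ is a nonnegative local martingale under radial $\SLE_\kappa$, being a probability-measure Radon-Nikodym derivative.

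Finally, I would apply Itô's formula to $M_t(\LZalphar)$ using $d\xi_t=\sqrt{\kappa}\,dB_t$, $d\phi_t(\theta_j)=\cot((\phi_t(\theta_j)-\xi_t)/2)\,dt$, $\log g_t'(0)=t$, and $\partial_t\log\phi_t'(\theta_j)=-(2\sin^2((\phi_t(\theta_j)-\xi_t)/2))^{-1}$. Setting the drift to zero at $t=0$ and using $h/2=(6-\kappa)/(4\kappa)$ produces~\eqref{eqn::radialBPZ_evenr2} with $j=1$; the equation for general $j$ follows by running the same argument on the curve in the link of $\alpha$ containing $j$ (started from either endpoint), so that each $\theta_j$ plays the role of the Loewner driving variable of some marginal. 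The main technical obstacle will be the conformal covariance of $\LZ_\alpha$ at the tip $\eta^{(1)}(t)$, where $g_t$ has no classical boundary derivative; this is absorbed into the factor $g_t'(0)^{-\tilde h}$ through the standard radial-chordal Loewner comparison, and is precisely the content of the $\mathfrak{r}=0$ marginal result imported from the literature.
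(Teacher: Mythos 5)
Your strategy matches the paper's: establish the conditional-expectation identity
\begin{equation*}
\E_{\alpha}\!\left[\CR\left(\U\setminus\bs{\eta};0\right)^{-\mathfrak{r}}\cond \eta_{[0,t]}\right]
= \ee^{\mathfrak{r}t}\,\frac{\LZalphar(\xi_t,\phi_t(\theta_2),\ldots,\phi_t(\theta_{2N}))}{\LZ_\alpha(\xi_t,\phi_t(\theta_2),\ldots,\phi_t(\theta_{2N}))}
= \frac{M_t(\LZalphar)}{M_t(\LZ_\alpha)},
\end{equation*}
multiply by the $\mathfrak{r}=0$ RN derivative from Lemma~\ref{lem::globalnSLE_mart} to cancel the $\LZ_\alpha$ factors, conclude the second bullet, and then read off the BPZ equation from the drift of $M_t(\LZalphar)$. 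That is exactly the paper's route.

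However, there is a genuine gap in your final step. To compute $\ud M_t(\LZalphar)$ via It\^{o}'s formula and extract the drift, you need $\LZalphar$ to be $C^2$ in the variables $(\theta_1,\ldots,\theta_{2N})$. You never address this, and it is not automatic: $\LZalphar$ is defined as $\LZ_\alpha\cdot\E_\alpha[\CR(\,\cdot\,)^{-\mathfrak{r}}]$, i.e.\ through an expectation, and no a priori smoothness has been established (indeed the dominated-convergence and finiteness arguments in Lemma~\ref{lem::finite_expectation} give boundedness, not differentiability). The paper handles precisely this point: since $M_t(\LZalphar)$ is known to be a local martingale from the RN-derivative identity alone (this does not require It\^{o}), this already encodes that $\LZalphar$ is a \emph{weak} solution of~\eqref{eqn::radialBPZ_evenr2}, and then the hypoellipticity of the second-order operator on the left-hand side upgrades weak solutions to smooth (strong) ones, after which the It\^{o} computation you describe is legitimate. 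Your remark about the ``main technical obstacle'' being the boundary derivative of $g_t$ at the tip is a red herring: there is no factor $\phi_t'(\theta_1)$ in $M_t(\LZalphar)$, and the singularity at the tip is already packaged into the prefactor $g_t'(0)^{-\tilde h}$ exactly as in the $\mathfrak{r}=0$ case, as you correctly observe. The issue you need to address is the regularity of $\LZalphar$ itself.
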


The rest of this section is organized as follows. We first give preliminaries on chordal SLE in Section~\ref{subsec::pre_SLE} and give preliminaries on pure partition functions in Section~\ref{subsec::pre_ppf}. Then we prove Lemma~\ref{lem::finite_expectation} in Section~\ref{subsec::finite_expectation} and prove Lemma~\ref{lem::marginal_LZalphar} in Section~\ref{subsec::marginal_LZalphar}. Finally, we give a generalization of Proposition~\ref{prop::solutions_evenr2} in Section~\ref{subsec::solutions_wiredbc}, the purpose of such generalization will be clear in the proof of Theorem~\ref{thm::FKIsing_cvg} in Section~\ref{subsec::FKIsing_proof}. 

\subsection{Preliminaries on chordal SLE}
\label{subsec::pre_SLE}

Chordal SLE is usually defined in the upper-half plane, in this article, it is more convenient to write down its definition in the unit disc. Fix $(\theta_1, \theta_2)\in\LX_2$, the law of a chordal $\SLE_{\kappa}$ in $(\U; \exp(\ii\theta_1), \exp(\ii\theta_2))$ is the same as a radial $\SLE_{\kappa}$ in $(\U; \exp(\ii\theta_1); 0)$ weighted by the following local martingale, up to the first time $\exp(\ii\theta_2)$ is disconnected from the origin: 
\[M_t(\LZtwo):=g_t'(0)^{-\tilde{h}}\times\phi_t'(\theta_2)^h\times \LZtwo(\xi_t, \phi_t(\theta_2)), \quad \text{where }\LZtwo(\theta_1, \theta_2)=(2\sin((\theta_2-\theta_1)/2))^{-2h}.\]
We say that the partition function for chordal $\SLE_{\kappa}$ in $(\U; \exp(\ii\theta_1), \exp(\ii\theta_2))$ is $(2\sin((\theta_2-\theta_1)/2))^{-2h}$.
%In particular, the driving function for a chordal $\SLE_{\kappa}$ in $(\U; \exp(\ii\theta_1), \exp(\ii\theta_2))$ is the solution to the following SDE, up to the first time $\exp(\ii\theta_2)$ is disconnected from the origin: 
%\begin{align}\label{eqn::chordalSLE_driving}
%\begin{cases}
%\ud \xi_t=\sqrt{\kappa}\ud B_t+\frac{(6-\kappa)}{2}\cot\left((V_t^{(2)}-\xi_t)/2\right)\ud t, \quad\xi_0=\theta_1;\\
%\ud V_t^{(2)}=\cot\left((V_t^{(2)}-\xi_t)/2\right)\ud t, \quad V_0^{(2)}=\theta_2. 
%\end{cases}
%\end{align}
Chordal SLE in a general domain is defined via conformal image. Furthermore, the partition function for chordal $\SLE_{\kappa}$ in $(\Omega; x_1, x_2)$ is given by 
\begin{equation}\label{eqn::PPF_Nequalsone}
\LZtwo(\Omega; x_1, x_2)=H(\Omega; x_1, x_2)^h. 
\end{equation}

\begin{lemma}\label{lem::SLE_CR_expectation}
Fix $\kappa\in (0,8)$ and a Dobrushin domain with an interior point $(\Omega; x_1, x_2; z)$. Suppose $\eta\sim\QQtwo$ is chordal $\SLE_{\kappa}$ in $(\Omega; x_1, x_2)$. For $\mathfrak{r}\in\R$, define
\begin{equation}\label{eqn::ppftwor_def}
\LZtwor(\Omega; x_1, x_2; z)=\LZtwo(\Omega; x_1, x_2)\Etwo\left[\CR(\Omega\setminus\eta; z)^{-\mathfrak{r}}\right]. 
\end{equation}
Then the expectation in~\eqref{eqn::ppftwor_def} is finite if and only if $\mathfrak{r}<1-\kappa/8$. Furthermore, if we denote 
\begin{equation}\label{eqn::LZtwor_pi_def}
\LZtwor(\pi):=\LZtwor(\U; 1, -1; 0).
\end{equation}
Then, we have the following upper bound:
\begin{equation}\label{eqn::LZtwor_bound}
\LZtwor(\Omega; x_1, x_2; z)\le
\begin{cases}
\CR(\Omega; z)^{-\mathfrak{r}} H(\Omega; x_1, x_2)^h, &\text{if }\mathfrak{r}\le 0;\\
4^h\LZtwor(\pi)\CR(\Omega; z)^{-\mathfrak{r}} H(\Omega; x_1, x_2)^h, &\text{if }0<\mathfrak{r}<1-\kappa/8. 
\end{cases}
\end{equation}
\end{lemma}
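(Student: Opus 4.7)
The plan is to reduce to the unit disc by conformal invariance, handle the trivial range $\mathfrak{r}\le 0$ by a monotonicity, and analyze the case $0<\mathfrak{r}<1-\kappa/8$ via the radial Loewner chain description of chordal SLE seen from the interior marked point.

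By conformal invariance of chordal $\SLE_\kappa$ together with conformal covariance of the Poisson kernel and of the conformal radius, taking a conformal map $\varphi:\Omega\to\U$ with $\varphi(z)=0$ reduces the problem to bounding $F(\theta_1, \theta_2) := \E[\CR(\U\setminus\tilde\eta; 0)^{-\mathfrak{r}}]$, where $\tilde\eta$ is chordal $\SLE_\kappa$ in $(\U; e^{i\theta_1}, e^{i\theta_2})$. For $\mathfrak{r}\le 0$, restriction monotonicity gives $\CR(\U\setminus\tilde\eta; 0)\le\CR(\U;0)=1$, so $F\le 1$, and the first line of~\eqref{eqn::LZtwor_bound} follows immediately after pulling back through $\varphi$.

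For $0 < \mathfrak{r} < 1-\kappa/8$, parametrize $\tilde\eta$ by capacity seen from $0$ and write $\CR(\U\setminus\tilde\eta; 0) = e^{-\tau}$, so $F = \E[e^{\mathfrak{r}\tau}]$. Applying the $N=1$ case of Lemma~\ref{lem::marginal_LZalphar} with $\mathfrak{r}=0$ and the chordal partition function $\LZtwo(\theta_1, \theta_2) = (2\sin((\theta_2-\theta_1)/2))^{-2h}$ in place of $\LZalphar$, the Girsanov drift reads $h\kappa\cot((V_t^{(2)}-\xi_t)/2)$, so that the relative process $X_t = V_t^{(2)} - \xi_t$ is the one-dimensional diffusion on $(0, 2\pi)$ with generator $L = \tfrac{\kappa}{2}\partial_x^2 + \tfrac{\kappa-4}{2}\cot(x/2)\partial_x$, absorbed in $\{0, 2\pi\}$ at time $\tau$. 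By Feynman--Kac, when finite, the function $u(x) = \E_x[e^{\mathfrak{r}\tau}]$ is the unique bounded positive solution of $Lu + \mathfrak{r}u = 0$ on $(0, 2\pi)$ with $u(0)=u(2\pi)=1$, and finiteness holds if and only if $\mathfrak{r}$ lies strictly below the principal Dirichlet eigenvalue of $-L$. The explicit eigenfunction $\psi(x) = (\sin(x/2))^{8/\kappa-1}$ (checked by direct substitution) identifies this eigenvalue as $1-\kappa/8$, yielding the claimed finiteness threshold.

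To pin down the constant $4^h\LZtwor(\pi)$ in the second line of~\eqref{eqn::LZtwor_bound}, note that $F(\theta_1,\theta_2) = u(\theta_2-\theta_1)$ depends only on the angular gap, and both the ODE and the boundary data are invariant under $x\leftrightarrow 2\pi - x$, so $u(x) = u(2\pi - x)$. A Sturm--Liouville / ODE-comparison argument then shows that $u$ is unimodal with unique maximum at $x = \pi$, whence $F\le u(\pi)$. Since $H(\U;1,-1) = 1/4$, evaluating yields $u(\pi) = H(\U;1,-1)^{-h}\LZtwor(\pi) = 4^h \LZtwor(\pi)$, as required. The main obstacle is this final unimodality step: the reflection symmetry alone does not rule out secondary local maxima of $u$, so a direct analysis of the radial BPZ ODE (or an explicit formula as in~\cite{WangWuCommutationRelation}) is needed to confirm that the maximum indeed lies at $\pi$.
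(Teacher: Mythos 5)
Your reduction to $(\U;e^{\ii\theta_1},e^{\ii\theta_2};0)$ and the case $\mathfrak{r}\le 0$ match the paper exactly. For the finiteness threshold you take a genuinely different route: the paper simply delegates the ``if and only if'' (and the representation $\LZtwor = (2\sin(\theta/2))^{-2h}\Phi(\kappa,\mathfrak{r};u)$ with $u=\sin^2(\theta/4)$) to the hypergeometric-ODE analysis in~\cite{WangWuCommutationRelation}, whereas you identify the threshold as the principal Dirichlet eigenvalue of the exit problem for the radial diffusion $X_t=V_t^{(2)}-\xi_t$. Your computation of the drift is correct: with $h=(6-\kappa)/(2\kappa)$ one gets $1-\kappa h = (\kappa-4)/2$, so the generator is $L=\tfrac{\kappa}{2}\partial_x^2+\tfrac{\kappa-4}{2}\cot(x/2)\partial_x$. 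And the eigenfunction check works: with $a=8/\kappa-1>0$ and $\psi(x)=(\sin(x/2))^a$, a direct computation gives $L\psi=-(1-\kappa/8)\psi$; since $\psi>0$ on $(0,2\pi)$ and vanishes at the endpoints, it is the ground-state eigenfunction and $1-\kappa/8$ the principal eigenvalue, which via Feynman--Kac gives the sharp threshold. This is a legitimate self-contained alternative to citing~\cite{WangWuCommutationRelation}. One bookkeeping issue: you invoke Lemma~\ref{lem::marginal_LZalphar} (with $\mathfrak{r}=0$) for the Girsanov description of chordal $\SLE_{\kappa}$ in radial coordinates, but that lemma lies downstream of the present one (via Lemma~\ref{lem::finite_expectation} and Corollary~\ref{cor::CR_estimate}), so the citation is circular; the fact you need is already stated in Section~\ref{subsec::pre_SLE} --- chordal $\SLE_{\kappa}$ is radial $\SLE_{\kappa}$ weighted by $M_t(\LZtwo)$ --- and should be cited from there.

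The genuine gap is the one you flag yourself: reflection symmetry gives $u'(\pi)=0$ but not that $\pi$ is the global maximum. (The paper also merely asserts $\Phi(\kappa,\mathfrak{r};u)\le\Phi(\kappa,\mathfrak{r};1/2)$.) The step is short and you should not leave it as an obstacle: $u(x)=\E_x[e^{\mathfrak{r}\tau}]$ solves
\begin{equation*}
\frac{\kappa}{2}u''(x)+\frac{\kappa-4}{2}\cot(x/2)u'(x)+\mathfrak{r}u(x)=0,
\end{equation*}
with $u>0$ on $(0,2\pi)$, $u(0^+)=u(2\pi^-)=1$, and $u(x)>1$ for $x\in(0,2\pi)$ since $\tau>0$ a.s.\ there. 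At any interior critical point $x^*$ the ODE forces $u''(x^*)=-\tfrac{2\mathfrak{r}}{\kappa}u(x^*)<0$ (this is where $\mathfrak{r}>0$ enters), so every interior critical point is a strict local maximum; two such would bracket a local minimum, which is impossible, so $u$ has exactly one interior critical point, and by the symmetry $u(x)=u(2\pi-x)$ it sits at $x=\pi$. Hence $u\le u(\pi)$, and your final conversion $u(\pi)=H(\U;1,-1)^{-h}\LZtwor(\pi)=4^h\LZtwor(\pi)$ is correct. With this observation supplied, the proposal is complete and sound.
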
 
\begin{proof}
It suffices to show the conclusion for $\Omega=\U$ and $z=0$. 
The first part of the conclusion is proved in~\cite[Proof of Lemma~3.10]{WangWuCommutationRelation}. Furthermore, the proof there gives the following description of the expectation in $\U$. For $(\theta_1, \theta_2)\in\LX_2$, we write
\[\LZtwor(\theta_1, \theta_2)=\LZtwor(\U; \exp(\ii\theta_1), \exp(\ii\theta_2); 0).\]
We denote $\theta=\theta_2-\theta_1$ and write 
\[\LZtwor(\theta_1, \theta_2)=(2\sin(\theta/2))^{-2h}\Phi(\kappa, \mathfrak{r}; u), \quad\text{where }u=\sin^2(\theta/4).\]
When $\mathfrak{r}<1-\kappa/8$, the function $u\mapsto \Phi(\kappa, \mathfrak{r}; u)$ satisfies the following Euler's hypergeometric differential equation:
\begin{equation*}%\label{eqn::CR_ODE}
    u(1-u)\Phi''+\frac{3\kappa-8}{2\kappa}(1-2u)\Phi'+\frac{8\mathfrak{r}}{\kappa}\Phi=0.
\end{equation*}
Then there are two cases.
\begin{itemize}
\item When $\mathfrak{r}\le 0$, as $\CR(\U\setminus\eta)\le 1$, we have
$
\LZtwor(\theta_1, \theta_2)\le \LZtwo(\theta_1, \theta_2)=(2\sin(\theta/2))^{-2h}
$
as desired in~\eqref{eqn::LZtwor_bound}.
\item When $0\le \mathfrak{r}<1-\kappa/8$, we have 
$\Phi(\kappa, \mathfrak{r}; u)\le \Phi(\kappa, \mathfrak{r}; 1/2)$.
Thus, 
\begin{align*}
\LZtwor(\theta_1, \theta_2)=&(2\sin(\theta/2))^{-2h}\Phi(\kappa, \mathfrak{r}; u)\\
\le& (2\sin(\theta/2))^{-2h}\Phi(\kappa, \mathfrak{r}; 1/2)\\
=& (\sin(\theta/2))^{-2h}\LZtwor(\pi)=4^h\LZtwor(\pi) (2\sin(\theta/2))^{-2h},
\end{align*}
as desired in~\eqref{eqn::LZtwor_bound}. Note that 
\begin{equation}\label{eqn::LZtworpi_Euler}
\Phi(\kappa, \mathfrak{r}; 1/2)=4^h\LZtwor(\pi). 
\end{equation}

\end{itemize}
\end{proof}

%Let us investigate the solution to~\eqref{eqn::CR_ODE} with $\mathfrak{r}=\mathfrak{r}_1(\kappa)$ which is one-arm exponent for conformal loop ensemble~\eqref{eqn::onearmexponent}: 
%\begin{equation*}
%\mathfrak{r}_1(\kappa)=\frac{(3\kappa-8)(8-\kappa)}{32\kappa}. 
%\end{equation*}
%By direct calculation, we find 
%\[\phi(u)=u^{4/\kappa-1/2}\]
%is a solution to~\eqref{eqn::CR_ODE} with $\mathfrak{r}=\mathfrak{r}_1(\kappa)$ and boundary data $\phi(0)=0$ and $\phi(1)=1$. Combining with Lemma~\ref{lem::CR_expectation}, we obtain the following conclusion.
%\begin{corollary}\label{cor::CR_expectation_onearm}
%Assume the same notations as in Lemma~\ref{lem::CR_expectation}. Then 
%\begin{align}\label{eqn::CR_expectation_onearm}
%\begin{split}
%&\E^{(\theta_1, \theta_2)}\left[\one\{\text{origin is to the right of $\eta$}\}\CR(\U\setminus\eta)^{-\mathfrak{r}_1(\kappa)}\right]=\left(\sin(\theta/4)\right)^{8/\kappa-1};\\
%&\E^{(\theta_1, \theta_2)}\left[\one\{\text{origin is to the left of $\eta$}\}\CR(\U\setminus\eta)^{-\mathfrak{r}_1(\kappa)}\right]=\left(\cos(\theta/4)\right)^{8/\kappa-1};\\
%&\E^{(\theta_1, \theta_2)}\left[\CR(\U\setminus\eta)^{-\mathfrak{r}_1(\kappa)}\right]=\left(\sin(\theta/4)\right)^{8/\kappa-1}+\left(\cos(\theta/4)\right)^{8/\kappa-1}. 
%\end{split}
%\end{align}
%\end{corollary}

The following estimate will be useful in the proof of Lemma~\ref{lem::finite_expectation}.
\begin{corollary}\label{cor::CR_estimate}
Fix $\kappa\in (0,8)$ and suppose $\eta\sim\QQtwo$ is chordal $\SLE_{\kappa}$ in $(\U; \exp(\ii\theta_1), \exp(\ii\theta_2))$. 
For any $p\in (0, 1-\kappa/8)$ and $\eps\in (0,1)$, we have
\begin{align}\label{eqn::CR_estimate1}
\QQtwo\left[\CR(\U\setminus\eta)\le\eps\right]\le 4^h\LZtwo^{(p)}(\pi) \eps^{p}. 
\end{align}
\end{corollary}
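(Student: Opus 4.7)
The plan is to deduce the estimate from Markov's inequality together with the uniform bound on $\LZtwor$ already established in Lemma~\ref{lem::SLE_CR_expectation}. Observe first that, for any $p>0$, the event $\{\CR(\U\setminus\eta)\le \eps\}$ coincides with $\{\CR(\U\setminus\eta)^{-p}\ge \eps^{-p}\}$, so Markov's inequality immediately yields
\begin{equation*}
\QQtwo[\CR(\U\setminus\eta)\le \eps]\le \eps^{p}\, \Etwo\!\left[\CR(\U\setminus\eta)^{-p}\right].
\end{equation*}
The task thus reduces to producing a uniform upper bound (in $\theta_1,\theta_2$) on the expectation $\Etwo[\CR(\U\setminus\eta)^{-p}]$.

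Next I would feed Lemma~\ref{lem::SLE_CR_expectation} into this inequality with $\mathfrak{r}=p\in(0,1-\kappa/8)$. By the definition~\eqref{eqn::ppftwor_def} applied to $(\Omega;x_1,x_2;z)=(\U;\exp(\ii\theta_1),\exp(\ii\theta_2);0)$, we have
\begin{equation*}
\LZtwo^{(p)}(\U;\exp(\ii\theta_1),\exp(\ii\theta_2);0)=\LZtwo(\U;\exp(\ii\theta_1),\exp(\ii\theta_2))\,\Etwo\!\left[\CR(\U\setminus\eta)^{-p}\right].
\end{equation*}
Using $\CR(\U;0)=1$ together with $H(\U;\exp(\ii\theta_1),\exp(\ii\theta_2))^{h}=\LZtwo(\U;\exp(\ii\theta_1),\exp(\ii\theta_2))$, the bound~\eqref{eqn::LZtwor_bound} for the regime $0<\mathfrak{r}<1-\kappa/8$ becomes
\begin{equation*}
\LZtwo(\U;\exp(\ii\theta_1),\exp(\ii\theta_2))\,\Etwo\!\left[\CR(\U\setminus\eta)^{-p}\right]\le 4^{h}\,\LZtwo^{(p)}(\pi)\,\LZtwo(\U;\exp(\ii\theta_1),\exp(\ii\theta_2)).
\end{equation*}
Dividing both sides by the strictly positive partition function $\LZtwo(\U;\exp(\ii\theta_1),\exp(\ii\theta_2))$ gives the desired uniform bound $\Etwo[\CR(\U\setminus\eta)^{-p}]\le 4^{h}\LZtwo^{(p)}(\pi)$, and combining with the Markov step above yields~\eqref{eqn::CR_estimate1}.

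There is essentially no obstacle here: the corollary is a quantitative consequence of Lemma~\ref{lem::SLE_CR_expectation}, and the only mildly delicate point is to match the normalizations so that the factor $4^{h}\LZtwo^{(p)}(\pi)$ drops out correctly after dividing by $\LZtwo(\U;\exp(\ii\theta_1),\exp(\ii\theta_2))$. Nothing requires the Euler hypergeometric representation of $\Phi(\kappa,p;u)$; the monotonicity $\Phi(\kappa,p;\cdot)\le \Phi(\kappa,p;1/2)$ packaged inside~\eqref{eqn::LZtwor_bound} is exactly what is needed.
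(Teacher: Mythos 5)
Your argument is correct and follows essentially the same route as the paper: Markov's inequality to reduce to a bound on $\Etwo[\CR(\U\setminus\eta)^{-p}]$, followed by the uniform bound from Lemma~\ref{lem::SLE_CR_expectation}. The paper works directly with the intermediate notation $\Phi(\kappa,p;u)\le\Phi(\kappa,p;1/2)=4^h\LZtwo^{(p)}(\pi)$ from the proof of that lemma, whereas you invoke the stated conclusion~\eqref{eqn::LZtwor_bound} and divide out the positive partition function $\LZtwo(\U;\exp(\ii\theta_1),\exp(\ii\theta_2))$ — the same estimate packaged slightly differently.
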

\begin{proof}
We assume the same notations as in the proof of Lemma~\ref{lem::SLE_CR_expectation}. By Markov inequality, we have
\begin{align*}
\QQtwo\left[\CR(\U\setminus\eta)\le \eps\right]=&\QQtwo\left[\CR(\U\setminus\eta)^{-p}\ge \eps^{-p}\right]\\
\le& \eps^{p}\Etwo\left[\CR(\U\setminus\eta)^{-p}\right]=\eps^{p}\Phi(\kappa, p;u).
\end{align*}
From~\eqref{eqn::LZtworpi_Euler}, we have
$\Phi(\kappa, p;u)\le \Phi(\kappa, p;1/2)=4^h\LZtwo^{(p)}(\pi)$.
This completes the proof of~\eqref{eqn::CR_estimate1}. 
\end{proof}

\subsection{Preliminaries on pure partition functions}
\label{subsec::pre_ppf}
Recall that $\LP_N$ denotes the set of all planar link patterns among $2N$ boundary points. We denote $\LP=\sqcup_{N\ge 0}\LP_N$. 
Pure partition functions of multiple $\SLE_{\kappa}$ are the recursive collection $\{\LZ_{\alpha} \colon \alpha \in \LP\}$ 
of functions \[\LZ_{\alpha}(\HH; \cdot) \colon \{(x_1,\ldots,x_{2N})\in\R^{2N}: x_1<\cdots<x_{2N}\}\to\R\]
uniquely determined by the following four properties:
\begin{itemize}
	\item Chordal BPZ equations: for all $ j \in \{1,\ldots,2n\}$, 
	\begin{align*}%\label{eqn::chordalBPZ}
		\left[ 
		\frac{\kappa}{2} \partial_j^2
		+ \sum_{\ell\neq j} \left( \frac{2}{x_{\ell}-x_{j}}\partial_{\ell}
		- \frac{2h}{(x_{\ell}-x_{j})^{2}} \right) \right]
		\LZ_{\alpha}(\HH; x_1,\ldots,x_{2n}) =  0.
	\end{align*}
	\item M\"{o}bius covariance: for all M\"obius maps $\varphi$ of the upper half-plane $\HH$ such that $\varphi(x_{1}) < \cdots < \varphi(x_{2N})$, we have
	\begin{align*}%\label{eqn::PPF_COV}
		\LZ_{\alpha}(\HH; x_{1},\ldots,x_{2N}) = 
		\prod_{j=1}^{2N} \varphi'(x_{j})^{h} 
		\times \LZ_{\alpha}(\HH; \varphi(x_{1}),\ldots,\varphi(x_{2N})).
	\end{align*}
	\item Asymptotics: with $\LZ_{\emptyset} \equiv 1$ for the empty link pattern $\emptyset \in \LP_0$, the collection $\{\LZ_{\alpha} \colon \alpha\in\LP\}$ satisfies the following recursive asymptotics property. Fix $N\ge 1$ and $j \in \{1,2, \ldots, 2N-1 \}$. 
	Then, we have
	\begin{align*}%\label{eqn::PPF_ASY} 
		\lim_{x_j,x_{j+1}\to\xi} \frac{\LZ_{\alpha}(\HH; x_1,\ldots, x_{2N})}{ (x_{j+1}-x_j)^{-2h} }
		= 
		\begin{cases}
			\LZ_{\alpha/\{j,j+1\}}(\HH; x_1, \ldots, x_{j-1}, x_{j+2}, \ldots, x_{2N}), 
			& \quad \text{if }\{j, j+1\}\in\alpha , \\
			0 ,
			& \quad \text{if }\{j, j+1\} \not\in \alpha ,
		\end{cases}
	\end{align*}
	where $\xi \in (x_{j-1}, x_{j+2})$ (with the convention that $x_0 = -\infty$ and  $x_{2N+1} = +\infty$), and $\alpha/\{k,l\}$ denotes the link pattern in $\LP_{N-1}$ obtained by removing $\{k,l\}$ from $\alpha$ and then relabeling the remaining indices so that they are the first $2(N-1)$ positive integers. 
	\item The functions are positive and satisfy the following power-law bound:
	\begin{align}\label{eqn::PPF_PLB}
		0<\LZ_{\alpha}(\HH; x_1, \ldots, x_{2N})\le\prod_{\{a,b\}\in\alpha}|x_a-x_b|^{-2h}, \quad \text{for all }x_1<\cdots<x_{2N}. 
	\end{align}
\end{itemize}

The uniqueness when $\kappa\in (0,8)$ of such collection of functions is proved in~\cite{FloresKlebanPDE2}. The existence when $\kappa\in (0,6]$ is proved in~\cite{WuHyperSLE}. For other results related to the existence of such functions, see~\cite{FloresKlebanPDE3, KytolaPeltolaPurePartitionFunctions, PeltolaWuGlobalMultipleSLEs, AngHoldenSunYu2023, FengLiuPeltolaWu2024}. 
We extend the definition of $\LZ_{\alpha}$ to more general
polygons $(\Omega; x_1, \ldots, x_{2N})$ as 
\begin{align}\label{eqn::PartF_def_polygon}
	\LZ_{\alpha}(\Omega; x_1, \ldots, x_{2N}):=\prod_{j=1}^{2N}|\varphi'(x_j)|^h\times \LZ_{\alpha}(\HH; \varphi(x_1), \ldots, \varphi(x_{2N})),
\end{align}
where $\varphi$ is any conformal map from $\Omega$ onto $\HH$ with $\varphi(x_1)<\cdots<\varphi(x_{2N})$. 
The power-law bound~\eqref{eqn::PPF_PLB} becomes
\begin{equation}\label{eqn::PPF_PLB_polygon}
0<\LZ_{\alpha}(\Omega; x_1, \ldots, x_{2N})\le \prod_{\{a,b\}\in\alpha}H(\Omega; x_a, x_b)^{h}. 
\end{equation}
For the polygon $(\U; \exp(\ii\theta_1), \ldots, \exp(\ii\theta_{2N}))$ with $(\theta_1, \ldots, \theta_{2N})\in\LX_{2N}$, we write
\begin{align*}%\label{eqn::PPF_unitD}
	\LZ_{\alpha}\left(\theta_1, \ldots, \theta_{2N}\right)=\LZ_{\alpha}\left(\U; \exp\left(\ii\theta_1\right), \ldots, \exp\left(\ii\theta_{2N}\right)\right).
\end{align*}
Then the chordal BPZ equations~\eqref{eqn::chordalBPZ} become radial BPZ equations~\eqref{eqn::radialBPZ} with
$\aleph=\frac{(6-\kappa)(\kappa-2)}{8\kappa}$. 
\medbreak
The Loewner evolution in global multiple SLEs can be described by pure partition functions. 

\begin{lemma}\label{lem::globalnSLE_mart}
Fix $\kappa\in (0,6], N\ge 1$, $\alpha\in\LP_N$ and $(\theta_1, \ldots, \theta_{2N})\in\LX_{2N}$. 
Suppose $\bs{\eta}=(\eta^{(1)}, \ldots, \eta^{(N)})\sim\QQ_{\alpha}$ is global $N$-$\SLE_{\kappa}$ associated to $\alpha$ in $(\U; \exp(\ii\theta_1), \ldots, \exp(\ii\theta_{2N}))$. 
Then the law of $\eta^{(1)}$ under $\QQ_{\alpha}$ is the same as radial $\SLE_{\kappa}$ in $(\U; \exp(\ii\theta_1); 0)$ weighted by the following local martingale, up to the first time $\exp(\ii\theta_{2})$ or $\exp(\ii\theta_{2N})$ is disconnected from the origin:
\begin{equation}\label{eqn::globalnSLE_mart_cor}
M_t(\LZ_{\alpha})=g_t'(0)^{-\tilde{h}}\times\prod_{j=2}^{2N}\phi_t'(\theta_j)^h\times\LZ_{\alpha}(\xi_t, \phi_t(\theta_2), \ldots, \phi_t(\theta_{2N})). 
\end{equation}  
\end{lemma}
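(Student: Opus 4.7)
The plan is to verify that $M_t(\LZ_\alpha)$ is a positive local martingale under the radial $\SLE_\kappa$ law in $(\U;\exp(\ii\theta_1);0)$ up to the stated stopping time, and then to identify the corresponding Girsanov-tilted process with the marginal law of $\eta^{(1)}$ under $\QQ_\alpha$. The key input that makes the martingale property work is the remark made just after~\eqref{eqn::PartF_def_polygon}: via M\"obius covariance, $\LZ_\alpha$ written in the radial coordinates of $\LX_{2N}$ satisfies the system of radial BPZ equations~\eqref{eqn::radialBPZ} with $\aleph=\tilde h$.

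I would then apply It\^o's formula to the three factors of $M_t(\LZ_\alpha)$, using $\partial_t\log g_t'(0)=1$, $\partial_t\phi_t(\theta_j)=\cot((\phi_t(\theta_j)-\xi_t)/2)$, $\partial_t\log\phi_t'(\theta_j)=-1/(2\sin^2((\phi_t(\theta_j)-\xi_t)/2))$, and $\ud\langle\xi\rangle_t=\kappa\,\ud t$. Since $g_t'(0)$ and the $\phi_t'(\theta_j)$ are of bounded variation, the only Brownian contribution to $\ud M_t/M_t$ is $(\sqrt{\kappa}\,\partial_1\LZ_\alpha/\LZ_\alpha)\,\ud B_t$, while the total drift reads
\[
-\tilde h\;-\;\sum_{j=2}^{2N}\frac{h/2}{\sin^2((\phi_t(\theta_j)-\xi_t)/2)}\;+\;\frac{\kappa}{2}\frac{\partial_1^2\LZ_\alpha}{\LZ_\alpha}\;+\;\sum_{j=2}^{2N}\cot\!\left(\tfrac{\phi_t(\theta_j)-\xi_t}{2}\right)\frac{\partial_j\LZ_\alpha}{\LZ_\alpha}.
\]
Recalling that $h/2=(6-\kappa)/(4\kappa)$, this expression vanishes identically by the radial BPZ equation with $\aleph=\tilde h$. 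Hence $M_t(\LZ_\alpha)$ is a positive local martingale up to the first time $\exp(\ii\theta_2)$ or $\exp(\ii\theta_{2N})$ is disconnected from $0$, and by Girsanov the tilted driving function is
\[
\ud\xi_t=\sqrt{\kappa}\,\ud\widetilde B_t+\kappa\,\frac{\partial_1\LZ_\alpha}{\LZ_\alpha}(\xi_t,\phi_t(\theta_2),\ldots,\phi_t(\theta_{2N}))\,\ud t.
\]

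Finally I would match the tilted radial process with the marginal of $\eta^{(1)}$ under $\QQ_\alpha$. The existence-and-uniqueness results cited after the definition of $\QQ_\alpha$ (in particular \cite{PeltolaWuGlobalMultipleSLEs,BeffaraPeltolaWuUniqueness,FengLiuPeltolaWu2024}) characterise the marginal of $\eta^{(1)}$ under $\QQ_\alpha$ as an SLE-type process whose Loewner driving function is $\sqrt\kappa$-Brownian motion weighted by $\partial_1\log\LZ_\alpha$; translating that characterisation from chordal to radial coordinates in $\U$, using the same chordal-to-radial conversion recalled in Section~\ref{subsec::pre_SLE} (which is precisely the $N=1$ instance of the formula~\eqref{eqn::globalnSLE_mart_cor} with $\LZ_\alpha=\LZtwo$), yields exactly the radial driving function produced by Girsanov above.

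The It\^o computation is essentially mechanical once the radial BPZ equation is in hand, so the main obstacle I expect is not the martingale verification itself but the last step: one must invoke the right pure-partition-function characterisation of global multiple $\SLE_\kappa$ for $\kappa\in(0,6]$ and check carefully that the chordal-to-radial conversion is compatible with the ``marginal then tilt'' procedure up to the disconnection time of the neighbouring marked points.
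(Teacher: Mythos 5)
Your proof is correct. The paper's own proof is extremely terse: it cites the chordal analogue of this lemma from \cite[Section~6]{WuHyperSLE} and then says to perform the standard $\HH$-to-$\U$ coordinate change of \cite{SchrammWilsonSLECoordinatechanges}. Your argument fills in exactly that coordinate change. You verify via It\^o's formula that $M_t(\LZ_\alpha)$ has vanishing drift under radial $\SLE_\kappa$, and you correctly identify the vanishing as being precisely the $j=1$ instance of the radial BPZ system~\eqref{eqn::radialBPZ} with $\aleph=\tilde h$, which the paper records right after~\eqref{eqn::PartF_def_polygon}; Girsanov then gives the driving function of the tilted process. Your final step — matching the tilted radial process with the $\QQ_\alpha$-marginal of $\eta^{(1)}$ — still rests on the same chordal pure-partition-function characterization together with a chordal-to-radial translation, so both proofs ultimately lean on the same external input; the It\^o/Girsanov verification is, strictly speaking, redundant once one already trusts that translation. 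What your expanded version buys is transparency: it exhibits exactly where the constant $\aleph=\tilde h$ (equivalently $\mathfrak r=0$) originates, and it spells out the bookkeeping, including the bounded-variation factors $g_t'(0)^{-\tilde h}$ and $\prod\phi_t'(\theta_j)^h$ and the identity $h/2=(6-\kappa)/(4\kappa)$, that the paper delegates to a one-line citation. As a minor stylistic caution, the phrase ``$\sqrt\kappa$-Brownian motion weighted by $\partial_1\log\LZ_\alpha$'' should more precisely say ``driving function $\ud\xi_t=\sqrt\kappa\,\ud B_t+\kappa\,\partial_1\log\LZ_\alpha\,\ud t$,'' but this does not affect the validity of the argument.
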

\begin{proof}
Analogous conclusion is known for the upper-half plane, see e.g.~\cite[Section~6]{WuHyperSLE}. From $\HH$ to $\U$, we perform a standard change of variables calculation~\cite{SchrammWilsonSLECoordinatechanges}. 
\end{proof}

The following cascade relation for pure partition functions will be useful later. 
Suppose $(\eta^{(1)}, \ldots, \eta^{(N)})\sim\QQ_{\alpha}$ is global $N$-$\SLE_{\kappa}$ associated to $\alpha$ in polygon $(\Omega; x_1, \ldots, x_{2N})$. 
The curve $\eta^{(1)}$ is in $\Omega$ from $x_{a_1}$ to $x_{b_1}$. We will describe the Radon-Nikodym derivative between $\eta^{(1)}$ under $\QQ_{\alpha}$ and chordal $\SLE_{\kappa}$ in $(\Omega; x_{a_1}, x_{b_1})$ below. 
For $\alpha=\{\{a_1, b_1\}, \ldots, \{a_N, b_N\}\}$, the link $\{a_1, b_1\}$ divides $\alpha$ into two sub-link patterns, connecting $\{a_1+1, \ldots, b_1-1\}$ and $\{b_1+1, \ldots, a_1-1\}$ respectively. After relabeling the indices, we denote these two link patterns by $\alpha_1^R$ and $\alpha_1^L$. 
Suppose $\eta$ is $\SLE_{\kappa}$ in $\Omega$ from $x_{a_1}$ to $x_{b_1}$, we say that $\eta$ is allowed by $\alpha$ if, for all $s\neq 1$, the points $x_{a_s}$ and $x_{b_s}$ lie on the boundary of the same connected component of $\U\setminus\eta$. In other words, $\eta$ is allowed by $\alpha$ if it does not disconnect any pair of points $\{x_{a_s}, x_{b_s}\}$ for $s\neq 1$. We denote this event by $\LE_{\alpha}(\eta)$. 
On the event $\LE_{\alpha}(\eta)$, the points $x_{a_1+1}, \ldots, x_{b_1-1}$ are divided into smaller groups. We denote the connected components of $\Omega\setminus\eta$ having these points on the boundary by $\Omega_1^{R, 1}, \ldots, \Omega_1^{R, r}$ in counterclockwise order and denote their union by $\Omega_1^R$. The sub-link pattern $\alpha_1^R$ is further divided into smaller sub-link patterns, after relabeling the indices, we denote these link patterns by $\alpha_1^{R, 1}, \ldots, \alpha_1^{R, r}$. We define 
\[\LZ_{\alpha_1^R}(\Omega_1^R; x_{a_1+1}, \ldots, x_{b_1-1})=\LZ_{\alpha_1^{R,1}}(\Omega_1^{R,1}; \ldots)\times\cdots\times\LZ_{\alpha_1^{R, r}}(\Omega_1^{R, r}; \ldots).\]
We define $\Omega_1^{L, 1}, \ldots, \Omega_1^{L, \ell}, \Omega_1^L, \alpha_1^{L, 1}, \ldots, \alpha_1^{L,\ell}$ similarly and define 
\[\LZ_{\alpha_1^L}(\Omega_1^L; x_{b_1+1}, \ldots, x_{a_1-1})=\LZ_{\alpha_1^{L,1}}(\Omega_1^{L,1}; \ldots)\times\cdots\times\LZ_{\alpha_1^{L, \ell}}(\Omega_1^{L, \ell}; \ldots).\]
We also write 
\begin{align*}
&\LZ_{\alpha/\{a_1, b_1\}}(\Omega\setminus\eta; x_{a_1+1}, \ldots, x_{b_1-1}, x_{b_1+1}, \ldots, x_{a_1-1})\\
=&\LZ_{\alpha_1^R}(\Omega_1^R; x_{a_1+1}, \ldots, x_{b_1-1})\times \LZ_{\alpha_1^L}(\Omega_1^L; x_{b_1+1}, \ldots, x_{a_1-1}).
\end{align*}
\begin{lemma}\label{lem::multipleSLEvschordalSLE}
Fix $\kappa\in (0,6], N\ge 1$, $\alpha\in\LP_N$ and a polygon $(\Omega; x_1, \ldots, x_{2N})$. Suppose $\eta\sim\QQtwo$ is chordal $\SLE_{\kappa}$ in $(\Omega; x_{a_1}, x_{b_1})$. 
Pure partition functions have the following cascade relation: 
\begin{align*}
\LZ_{\alpha}(\Omega; x_1, \ldots, x_{2N})=\LZtwo(\Omega; x_{a_1}, x_{b_1})\Etwo\left[\LZ_{\alpha/\{a_1, b_1\}}(\Omega\setminus\eta; x_{a_1+1}, \ldots, x_{b_1-1}, x_{b_1+1}, \ldots, x_{a_1-1})\one\{\LE_{\alpha}(\eta)\}\right]
\end{align*}
Furthermore, the law of $\eta^{(1)}$ under $\QQ_{\alpha}(\Omega; x_1, \ldots, x_{2N})$ is the same as $\eta\sim\QQtwo(\Omega; x_{a_1}, x_{b_1})$ weighted by 
\[\frac{\LZtwo(\Omega; x_{a_1}, x_{b_1}) }{\LZ_{\alpha}(\Omega; x_1, \ldots, x_{2N})}\LZ_{\alpha/\{a_1, b_1\}}(\Omega\setminus\eta; x_{a_1+1}, \ldots, x_{b_1-1}, x_{b_1+1}, \ldots, x_{a_1-1})\one\{\LE_{\alpha}(\eta)\}.\]
\end{lemma}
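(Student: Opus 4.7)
The plan is to derive both claims simultaneously by computing the Radon-Nikodym derivative between the marginal law of $\eta^{(1)}$ under $\QQ_{\alpha}$ and chordal $\SLE_\kappa$ under $\QQtwo$ in $(\Omega; x_{a_1}, x_{b_1})$; the cascade identity then follows from the requirement that this derivative integrate to one. By Lemma~\ref{lem::globalnSLE_mart} and the analogous description of chordal SLE in Section~\ref{subsec::pre_SLE}, both processes arise from the same radial $\SLE_\kappa$ (started at $\exp(\ii\theta_{a_1})$, using the convention $a_1=1$) weighted by the local martingales $M_t(\LZ_\alpha)/M_0(\LZ_\alpha)$ and $M_t(\LZtwo)/M_0(\LZtwo)$ respectively. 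Taking the ratio, the Radon-Nikodym derivative of $\eta^{(1)}$ with respect to $\eta$ at capacity time $t$ equals the local martingale
\[
R_t \;=\; \frac{\LZtwo(\Omega; x_{a_1}, x_{b_1})}{\LZ_\alpha(\Omega; x_1,\ldots,x_{2N})}\;\prod_{j \neq a_1, b_1} \phi_t'(\theta_j)^h \;\cdot\; \frac{\LZ_\alpha(\xi_t,\phi_t(\theta_2),\ldots,\phi_t(\theta_{2N}))}{\LZtwo(\xi_t,\phi_t(\theta_{b_1}))}.
\]
Initially this comparison is valid only up to the first disconnection of a neighbor of $\theta_{a_1}$ from the origin; I would extend it to the completion time $T$ of $\eta$ by inducting on the number of marked points still attached to $0$, transitioning to sub-polygons with smaller link patterns at each absorption event via the recursive asymptotics of $\LZ_\alpha$ in Section~\ref{subsec::pre_ppf}.

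At the completion time $T$, the curve $\eta$ closes at $x_{b_1}$, so $\xi_t$ merges with $\phi_t(\theta_{b_1})$. On the event $\LE_\alpha(\eta)$, the connected components of $\Omega\setminus\eta$ partition the remaining marked points according to the sub-link patterns $\alpha_1^R, \alpha_1^L$. By iterating the asymptotic factorization of $\LZ_\alpha$ on pairs of points that become trapped in the same complementary component, and absorbing the derivatives $\phi_t'(\theta_j)^h$ into the conformal maps of each such component via conformal covariance, I would show
\[
\lim_{t \to T}\prod_{j \neq a_1, b_1} \phi_t'(\theta_j)^h \, \frac{\LZ_\alpha(\xi_t,\phi_t(\theta_2),\ldots)}{\LZtwo(\xi_t,\phi_t(\theta_{b_1}))} \;=\; \LZ_{\alpha/\{a_1, b_1\}}\bigl(\Omega \setminus \eta;\, x_{a_1+1},\ldots,x_{b_1-1},x_{b_1+1},\ldots,x_{a_1-1}\bigr).
\]
On the complementary event $\LE_\alpha(\eta)^c$, some linked pair $\{x_{a_s}, x_{b_s}\}$ with $s\neq 1$ is separated by $\eta$, and the vanishing asymptotics of $\LZ_\alpha$ when non-linked neighboring points are sent together forces the above product (and hence $R_T$) to vanish. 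Combining the two cases yields the Radon-Nikodym derivative stated in the lemma, and integrating $R_T$ against $\QQtwo$ immediately gives the cascade identity for $\LZ_\alpha$.

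The main obstacle will be justifying the limit in the second step for $\kappa \in (4, 6]$, where $\eta$ is non-simple and $\Omega\setminus\eta$ typically has many components, with marked points being absorbed at various intermediate stopping times. Controlling this rigorously requires uniform integrability of $R_t$ together with careful boundary estimates on how $\LZ_\alpha$ and its radial Loewner derivatives behave at merging events, in the spirit of the Radon-Nikodym machinery developed in~\cite{WuHyperSLE, BeffaraPeltolaWuUniqueness}.
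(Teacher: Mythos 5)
The paper's ``proof'' of this lemma is a one-line citation to~\cite[Section~6]{WuHyperSLE}, and your sketch recovers precisely the argument carried out there: compare $\eta^{(1)}$ under $\QQ_{\alpha}$ and chordal $\eta\sim\QQtwo$ through their common base measure, deduce the Radon--Nikodym derivative from the ratio of local martingales, and identify the limit at the completion time using the recursive boundary asymptotics of $\LZ_{\alpha}$ (yielding $\LZ_{\alpha/\{a_1,b_1\}}$ on $\LE_{\alpha}(\eta)$ and zero on the complement), from which the cascade identity follows by normalization.

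One stylistic remark: routing the comparison through \emph{radial} $\SLE_{\kappa}$ rooted at $0$ (via Lemma~\ref{lem::globalnSLE_mart}) is slightly awkward here, since the cascade identity does not involve the interior point at all and the radial martingale description is only valid up to disconnection times of neighbors from $0$ --- stopping times with no intrinsic meaning for the curve from $x_{a_1}$ to $x_{b_1}$. The natural base measure is chordal $\SLE_{\kappa}$ in $(\Omega; x_{a_1}, x_{b_1})$, expressing the Radon--Nikodym derivative via chordal Loewner maps; this sidesteps the origin entirely and is how the argument in~\cite{WuHyperSLE} is organized. Your route can be made to work, but it introduces spurious stopping/restarting.

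The genuine gap, which you candidly flag, is the final extension to the completion time $T$ for $\kappa\in(4,6]$: non-simple curves trap marked points in multiple complementary components at different intermediate times, so the ``induction on absorbed points'' and the limit identity you state require uniform integrability of $R_t$ and careful control of the merging asymptotics; none of this is supplied. That machinery is the real content of~\cite[Section~6]{WuHyperSLE}, so what you have written is a correct plan, not a self-contained proof. Since the paper itself defers entirely to that reference, your proposal is on par with it in level of detail, but should not be mistaken for a complete argument.
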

\begin{proof}
See~\cite[Section~6]{WuHyperSLE}.
\end{proof}

\subsection{Proof of Lemma~\ref{lem::finite_expectation}}
\label{subsec::finite_expectation}

\begin{proof}[Proof of Lemma~\ref{lem::finite_expectation}]
It suffices to show the conclusion for $\Omega=\U$ and $z=0$.
Recall that $\bs{\eta}=(\eta^{(1)}, \ldots, \eta^{(N)})\sim\QQ_{\alpha}$ is global $N$-$\SLE_{\kappa}$ associated to $\alpha$ in polygon $(\U; x_1, \ldots, x_{2N})$. We denote $\bs{\eta}=\cup_{j=1}^N\eta^{(j)}$ and denote by $\CR(\U\setminus\bs{\eta})$ the conformal radius of the connected component of $\U\setminus\bs{\eta}$ containing the origin. Note that $\CR(\U\setminus\bs{\eta})\le 1$. 
If $\mathfrak{r}\le 0$, we have
\begin{align*}
\LZalphar(\U; x_1, \ldots, x_{2N}; 0)\le &\LZ_{\alpha}(\U; x_1, \ldots, x_{2N})
\le\prod_{\{a,b\}\in\alpha}H(\U; x_a, x_b)^h,\tag{due to~\eqref{eqn::PPF_PLB_polygon}}
\end{align*}
as desired in~\eqref{eqn::LZalphar_upperbound}. 

In the rest of the proof, we assume $\mathfrak{r}>0$.
We write $\alpha\in\LP_N$ as in~\eqref{eqn::linkpattern_order}. 
Let us estimate the probability $\QQ_{\alpha}\left[\CR\left(\U\setminus\bs{\eta}\right)\le \eps\right]$ for $\eps>0$ small. 
For any subset $K\subset\U$, Koebe's one quarter theorem gives 
$\dist(0,K)\le \CR(\U\setminus K)\le 4\dist(0,K)$.
Thus,
\begin{align*}
\left\{\CR\left(\U\setminus\bs{\eta}\right)\le \eps\right\}
\quad \subset\quad 
\cup_{j=1}^N\left\{\dist(0,\eta^{(j)})\le \eps\right\}
\quad \subset\quad
\cup_{j=1}^N\left\{\CR(\U\setminus\eta^{(j)})\le 4\eps\right\}. 
\end{align*}
Consequently,
\begin{align}\label{eqn::multipleSLE_CR_aux2}
\QQ_{\alpha}\left[\CR\left(\U\setminus\bs{\eta}\right)\le \eps\right]\le \sum_{j=1}^N\QQ_{\alpha}\left[\CR(\U\setminus\eta^{(j)})\le 4\eps\right].
\end{align}
It suffices to estimate $\QQ_{\alpha}[\CR(\U\setminus\eta^{(j)})\le 4\eps]$.  

From Lemma~\ref{lem::multipleSLEvschordalSLE}, the law of $\eta^{(j)}$ under $\QQ_{\alpha}$ is absolutely continuous with respect to $\eta\sim\QQtwo$ chordal $\SLE_{\kappa}$ in $(\U; x_{a_j}, x_{b_j})$. We denote by $\LE^j_{\alpha}(\eta)$ the event that $\eta$ is allowed by $\alpha$. Then the law of $\eta^{(j)}$ under $\QQ_{\alpha}$ is the same as $\eta$ weighted by 
\[\frac{\LZtwo(\U; x_{a_j}, x_{b_j}) }{\LZ_{\alpha}(\U; x_1, \ldots, x_{2N})}\LZ_{\alpha/\{a_j,b_j\}}(\U\setminus\eta; x_{a_j}+1, \ldots, x_{b_j-1}, x_{b_j+1}, \ldots, x_{a_j-1})\one\left\{\LE^j_{\alpha}(\eta)\right\}.\]
From~\eqref{eqn::PPF_PLB_polygon} and the monotonicity of Poisson kernel~\eqref{eqn::Poisson_mono}, we have  
\begin{align}\label{eqn::multipleSLE_CR_aux1}
\LZ_{\alpha/\{a_j,b_j\}}(\U\setminus\eta; x_{a_j}+1, \ldots, x_{b_j-1}, x_{b_j+1}, \ldots, x_{a_j-1})
\le  \prod_{s\neq j}H(\U; x_{a_s}, x_{b_s})^h. 
\end{align}
We pick $p\in (0,1-\kappa/8)$, then 
\begin{align*}
&\QQ_{\alpha}[\CR(\U\setminus\eta^{(j)})\le 4\eps]\\
=&\frac{\LZ_{\includegraphics[scale=0.2]{figures/link-0}}(\U; x_{a_j}, x_{b_j}) }{\LZ_{\alpha}(\U; x_1, \ldots, x_{2N})}\Etwo\left[\LZ_{\alpha/\{a_j,b_j\}}(\U\setminus\eta; x_{a_j}+1, \ldots, x_{b_j-1}, x_{b_j+1}, \ldots, x_{a_j-1})\one\left\{\LE_{\alpha}^j(\eta)\cap\{\CR(\U\setminus\eta)\le 4\eps\}\right\}\right]\\
\le & \frac{\prod_{s=1}^N H(\U; x_{a_s}, x_{b_s})^h}{\LZ_{\alpha}(\U; x_1, \ldots, x_{2N})}\QQtwo[\CR(\U\setminus\eta)\le 4\eps]\tag{due to~\eqref{eqn::multipleSLE_CR_aux1}}\\
\le & \frac{\prod_{s=1}^N H(\U; x_{a_s}, x_{b_s})^h}{\LZ_{\alpha}(\U; x_1, \ldots, x_{2N})} 4^h \LZtwo^{(p)}(\pi) (4\eps)^p. \tag{due to~\eqref{eqn::CR_estimate1}}
\end{align*}
Plugging into~\eqref{eqn::multipleSLE_CR_aux2}, we obtain:
\begin{align}\label{eqn::multipleSLE_CR_aux0}
\QQ_{\alpha}\left[\CR\left(\U\setminus\bs{\eta}\right)\le \eps\right]\le N 4^p 4^h \LZtwo^{(p)}(\pi) \frac{\prod_{s=1}^N H(\U; x_{a_s}, x_{b_s})^h}{\LZ_{\alpha}(\U; x_1, \ldots, x_{2N})} \eps^p. 
\end{align}

Now, we are ready to show the conclusion. For $\mathfrak{r}\in (0, 1-\kappa/8)$, we pick $p\in (\mathfrak{r}, 1-\kappa/8)$, then we have
\begin{align*}
\E_{\alpha}\left[\CR\left(\U\setminus\bs{\eta}\right)^{-\mathfrak{r}}\right]\le &\sum_{k=1}^{\infty}2^{k\mathfrak{r}}\QQ_{\alpha}\left[2^{-k}\le \CR\left(\U\setminus\bs{\eta}\right)\le 2^{-k+1}\right]\\
\le & N 4^p 4^h \LZtwo^{(p)}(\pi) \frac{\prod_{s=1}^N H(\U; x_{a_s}, x_{b_s})^h}{\LZ_{\alpha}(\U; x_1, \ldots, x_{2N})}\sum_{k=1}^{\infty}2^{k\mathfrak{r}}2^{-(k-1)p}\tag{due to~\eqref{eqn::multipleSLE_CR_aux0}}\\
=&N 4^p 4^h \LZtwo^{(p)}(\pi) \frac{\prod_{s=1}^N H(\U; x_{a_s}, x_{b_s})^h}{\LZ_{\alpha}(\U; x_1, \ldots, x_{2N})}\frac{2^{p}}{1-2^{\mathfrak{r}-p}}.
\end{align*}
Therefore, 
\begin{align*}
\LZalphar(\U; x_1, \ldots, x_{2N}; 0)\le N 4^p 4^h \LZtwo^{(p)}(\pi) \frac{2^{p}}{1-2^{\mathfrak{r}-p}}\times \prod_{s=1}^N H(\U; x_{a_s}, x_{b_s})^h.
\end{align*}
This gives~\eqref{eqn::LZalphar_upperbound} by choosing $p\in (\mathfrak{r}, 1-\kappa/8)$ and setting
\begin{align*}
\cst=4^p 4^h \LZtwo^{(p)}(\pi) \frac{2^{p}}{1-2^{\mathfrak{r}-p}}. 
\end{align*}
\end{proof}

\subsection{Proof of Lemma~\ref{lem::marginal_LZalphar}}
\label{subsec::marginal_LZalphar}
\begin{proof}[Proof of Lemma~\ref{lem::marginal_LZalphar}]
For $(\theta_1, \ldots, \theta_{2N})\in\LX_{2N}$, suppose $\bs{\eta}=(\eta^{(1)}, \ldots, \eta^{(N)})\sim \QQ_{\alpha}$ is global $N$-$\SLE_{\kappa}$ associated to $\alpha$ in $(\U; \exp(\ii\theta_1), \ldots, \exp(\ii\theta_{2N}))$. 
We denote $\bs{\eta}=\cup_{j=1}^N\eta^{(j)}$ and denote by $\CR(\U\setminus\bs{\eta})$ the conformal radius of the connected component of $\U\setminus\bs{\eta}$ containing the origin.
We denote by $\QQalphar$
the law of $\QQ_{\alpha}$ weighted by \[\frac{\LZ_{\alpha}(\U; \exp(\ii\theta_1), \ldots, \exp(\ii\theta_{2N}))}{\LZalphar(\U; \exp(\ii\theta_1), \ldots, \exp(\ii\theta_{2N});0)}\CR\left(\U\setminus\bs{\eta}\right)^{-\mathfrak{r}}.\]
It suffices to show the conclusions for $j=1$. 

For $\eta=\eta^{(1)}$, let us calculate the conditional expectation of $\CR\left(\U\setminus\bs{\eta}\right)^{-\mathfrak{r}}$ given $\eta_{[0,t]}$ for $t>0$ small.
From the conformal invariance and domain Markov property of global $N$-$\SLE_{\kappa}$, we have
\begin{align}\label{eqn::mart_multipleSLE_aux1}
\E_{\alpha}\left[\CR\left(\U\setminus\bs{\eta}\right)^{-\mathfrak{r}}\cond \eta_{[0,t]}\right]=\ee^{\mathfrak{r}t}\frac{\LZalphar(\xi_t, \phi_t(\theta_2), \ldots, \phi_t(\theta_{2N}))}{\LZ_{\alpha}(\xi_t, \phi_t(\theta_2), \ldots, \phi_t(\theta_{2N}))}=\frac{M_t(\LZalphar)}{M_t(\LZ_{\alpha})}, 
\end{align}
where $M_t(\LZ_{\alpha})$ is defined in~\eqref{eqn::globalnSLE_mart_cor} and $M_t(\LZalphar)$ is defined in~\eqref{eqn::mart_multipleSLE_CR}. 
Combining Lemma~\ref{lem::globalnSLE_mart} and~\eqref{eqn::mart_multipleSLE_aux1}, the law of $\eta=\eta^{(1)}$ under $\QQalphar$ is the same as radial $\SLE_{\kappa}$ in $(\U; \exp(\ii\theta_1); 0)$ weighted by $M_t(\LZalphar)$. 

It remains to show that $\LZalphar$ satisfies radial BPZ equation~\eqref{eqn::radialBPZ_evenr2} with $j=1$. 
Let us calculate $\ud M_t(\LZalphar)$ assuming that $\LZalphar$ is $C^2$. Recall that
\begin{align*}
g_t'(0)=\ee^t,\quad &\partial_t \phi_t(w)=\cot((\phi_t(w)-\xi_t)/2),\quad 
\partial_t\phi_t'(w)=-\frac{1}{2}\csc^2((\phi_t(w)-\xi_t)/2)\phi_t'(w). 
\end{align*}
Thus, It\^{o}'s calculus gives
\begin{align}\label{eqn::mart_multipleSLE_aux2}
&\frac{\ud M_t(\LZalphar)}{M_t(\LZalphar)}\\
=&\frac{\partial_1\LZalphar}{\LZalphar}\ud \xi_t+\left[\frac{\kappa}{2}\frac{\partial_1^2\LZalphar}{\LZalphar}+\sum_{j=2}^{2N}\left(\cot\left((\phi_t(\theta_{j})-\xi_t)/2\right)\frac{\partial_j\LZalphar}{\LZalphar}-\frac{h}{2}\csc^2\left((\phi_t(\theta_{j})-\xi_t)/2\right)\right)+(\mathfrak{r}-\tilde{h})\right]\ud t.\notag
\end{align}

As $M_t(\LZalphar)$ is a local martingale for radial $\SLE_{\kappa}$, the second term in RHS of~\eqref{eqn::mart_multipleSLE_aux2} has to vanish. Thus $\LZalphar$ has to solve radial BPZ equation~\eqref{eqn::radialBPZ_evenr2} with $j=1$ under the assumption that $\LZalphar$ is $C^2$. Let us elaborate on the $C^2$ assumption here. In fact, the above analysis implies that $\LZalphar$ is a weak solution for~\eqref{eqn::radialBPZ_evenr2} with $j=1$ (see~\cite[Appendix~A]{FengLiuPeltolaWu2024}). As the operator in LHS of~\eqref{eqn::radialBPZ_evenr2} is hypoelliptic, weak solutions are strong solutions, thus $\LZalphar$ is indeed smooth and satisfies~\eqref{eqn::radialBPZ_evenr2} as desired. 
\end{proof}

\subsection{A generalization of Proposition~\ref{prop::solutions_evenr2}}
\label{subsec::solutions_wiredbc}

For $\kappa\in (0,6]$, suppose $\bs{\eta}=(\eta^{(1)}, \ldots, \eta^{(N)})\sim\QQ_{\alpha}$ is global $N$-$\SLE_{\kappa}$ associated to $\alpha$ in polygon $(\Omega; x_1, \ldots, x_{2N})$ and order $\alpha$ as in~\eqref{eqn::linkpattern_order}. We orient $\eta^{(j)}$ from $x_{a_j}$ to $x_{b_j}$. 
We denote by $\Omega_{\bs{\eta}}(z)$ the connected component of $\Omega\setminus\bs{\eta}$ containing $z$. 
We will give a generalization of Proposition~\ref{prop::solutions_evenr2}. 
We first explain the generalization when $\kappa\le 4$ as it is easier to describe in this case and then  give the formal definition for general $\kappa\in (0,6]$.
 
Fix $\kappa\in (0,4]$.  The $N$ curves $\eta^{(1)}, \ldots, \eta^{(N)}$ are disjoint and $\Omega\setminus\bs{\eta}$ has $(N+1)$ connected components. If we denote these $(N+1)$ connected components by $D_1, \ldots, D_{N+1}$ and replace $\E_{\alpha}\left[\CR(\Omega\setminus\bs{\eta};z)^{-\mathfrak{r}}\right]$ by $\E_{\alpha}\left[\one\{z\in D_j\}\CR(\Omega\setminus\bs{\eta};z)^{-\mathfrak{r}}\right]$ in~\eqref{eqn::solutions_evenr2}, then the corresponding partition functions still satisfy the system of radial BPZ equations~\eqref{eqn::radialBPZ} with
$\aleph=\frac{(6-\kappa)(\kappa-2)}{8\kappa}-\mathfrak{r}$. In particular, any linear combination of such functions satisfy the same system of radial BPZ equations. We define $\mathcal{W}(\bs{\eta};z)$ to be the event that $\{\left(\partial\Omega_{\bs{\eta}}(z)\cap\partial\Omega\right)\subset\cup_{j=1}^N(x_{2j-1}x_{2j})\}$ and replace $\E_{\alpha}\left[\CR(\Omega\setminus\bs{\eta};z)^{-\mathfrak{r}}\right]$ by $\E_{\alpha}\left[\one\{\mathcal{W}(\bs{\eta};z)\}\CR(\Omega\setminus\bs{\eta};z)^{-\mathfrak{r}}\right]$ in~\eqref{eqn::solutions_evenr2}, then the corresponding partition functions still satisfy the system of radial BPZ equations~\eqref{eqn::radialBPZ} with
$\aleph=\frac{(6-\kappa)(\kappa-2)}{8\kappa}-\mathfrak{r}$. See Figure~\ref{fig::Wetaz}. 

\begin{figure}[ht!]
\begin{subfigure}[b]{0.2\textwidth}
\begin{center}
\includegraphics[width=\textwidth]{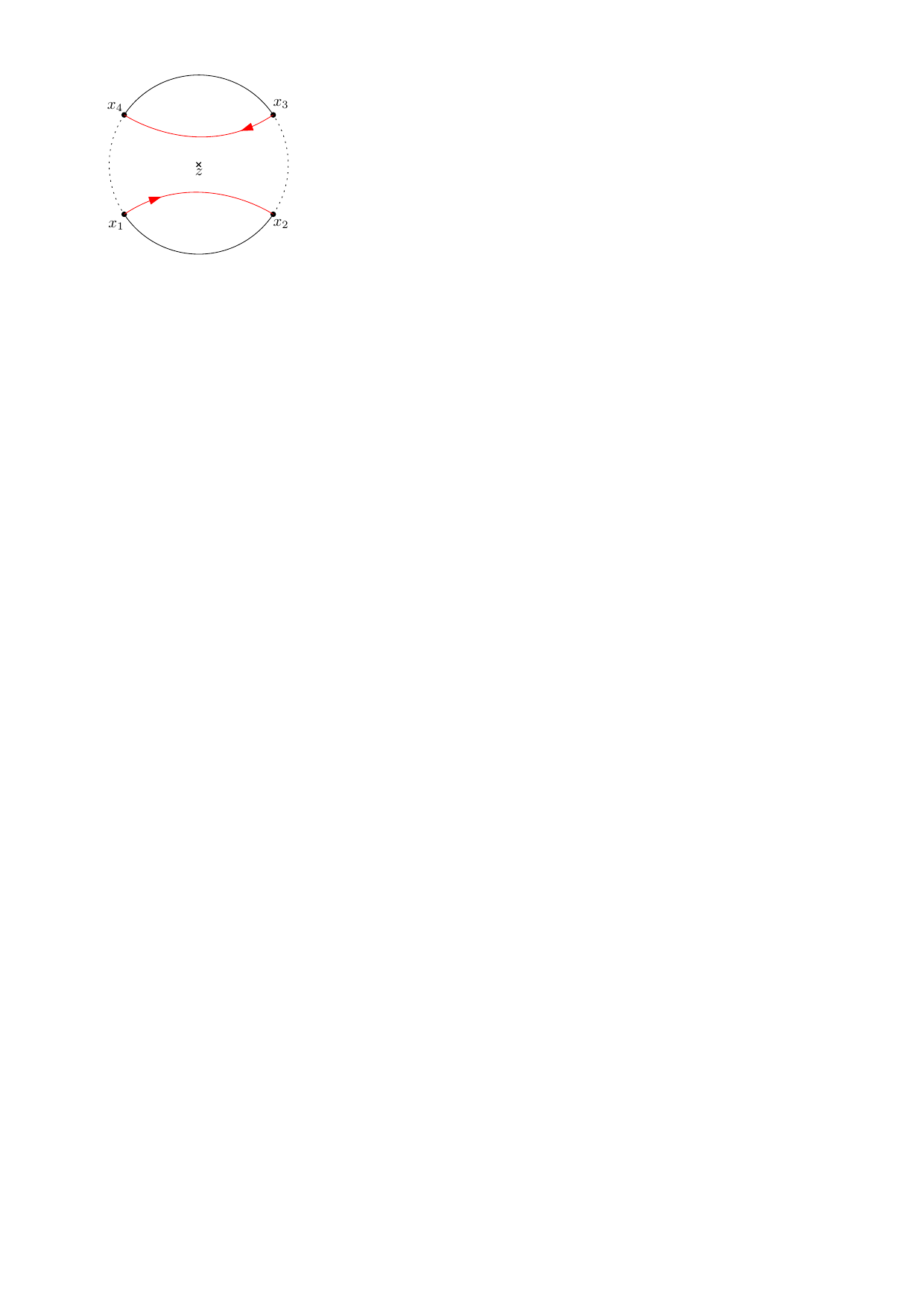}
\end{center}
\caption{}
\end{subfigure}
$\quad$
\begin{subfigure}[b]{0.2\textwidth}
\begin{center}
\includegraphics[width=\textwidth]{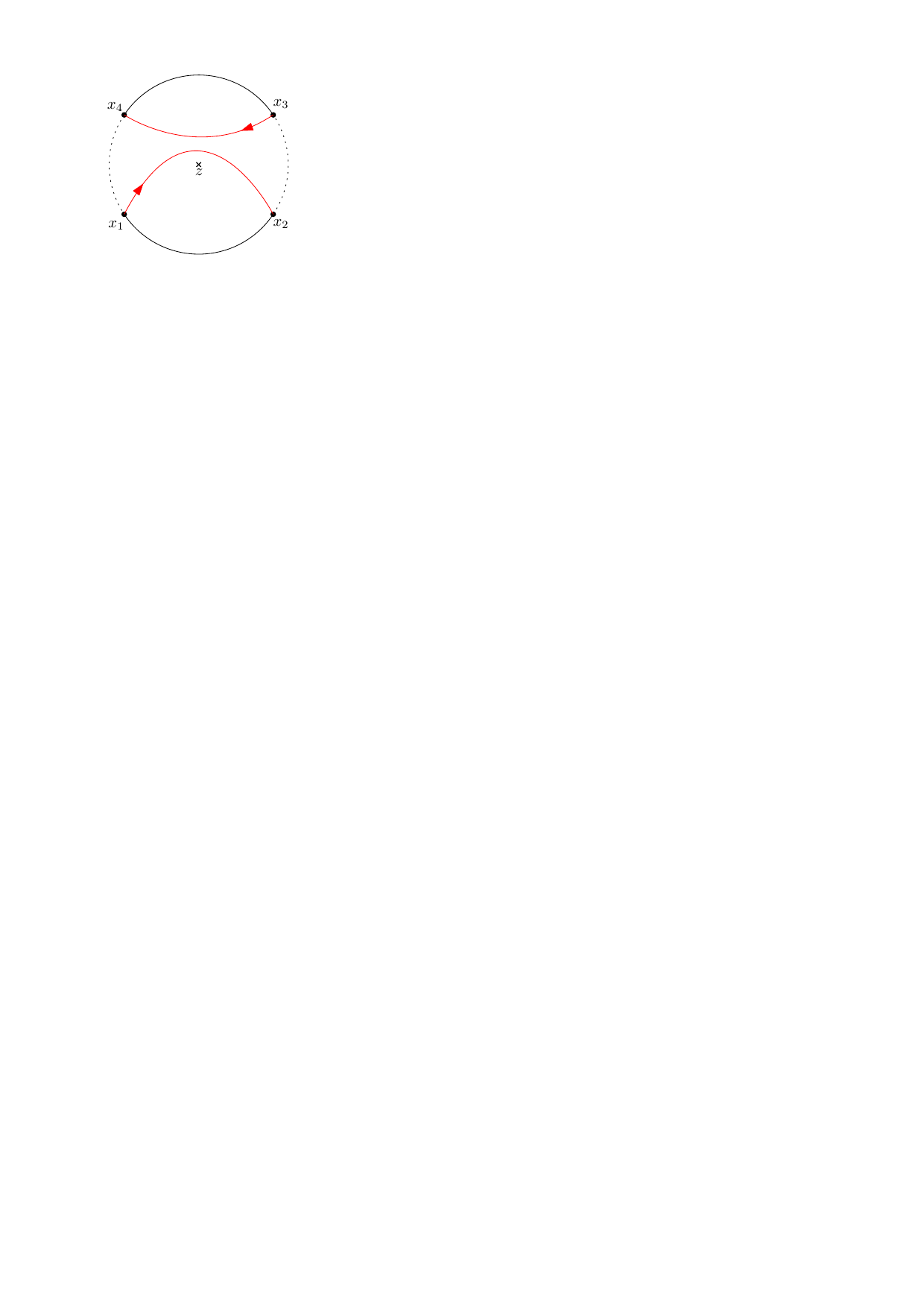}
\end{center}
\caption{}
\end{subfigure}
$\quad$
\begin{subfigure}[b]{0.2\textwidth}
\begin{center}
\includegraphics[width=\textwidth]{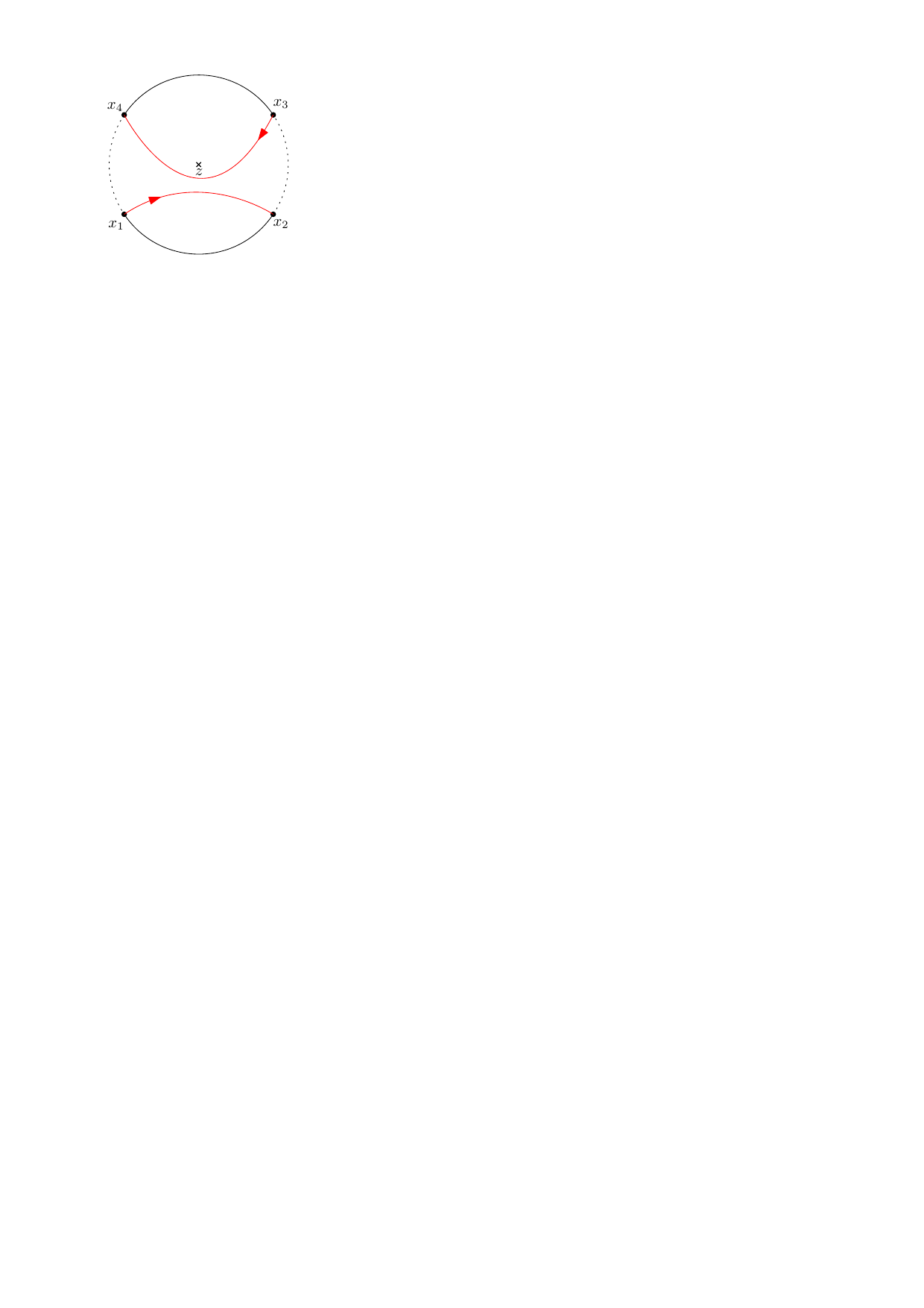}
\end{center}
\caption{}
\end{subfigure}
\\\vspace{0.5cm}
\begin{subfigure}[b]{0.2\textwidth}
\begin{center}
\includegraphics[width=\textwidth]{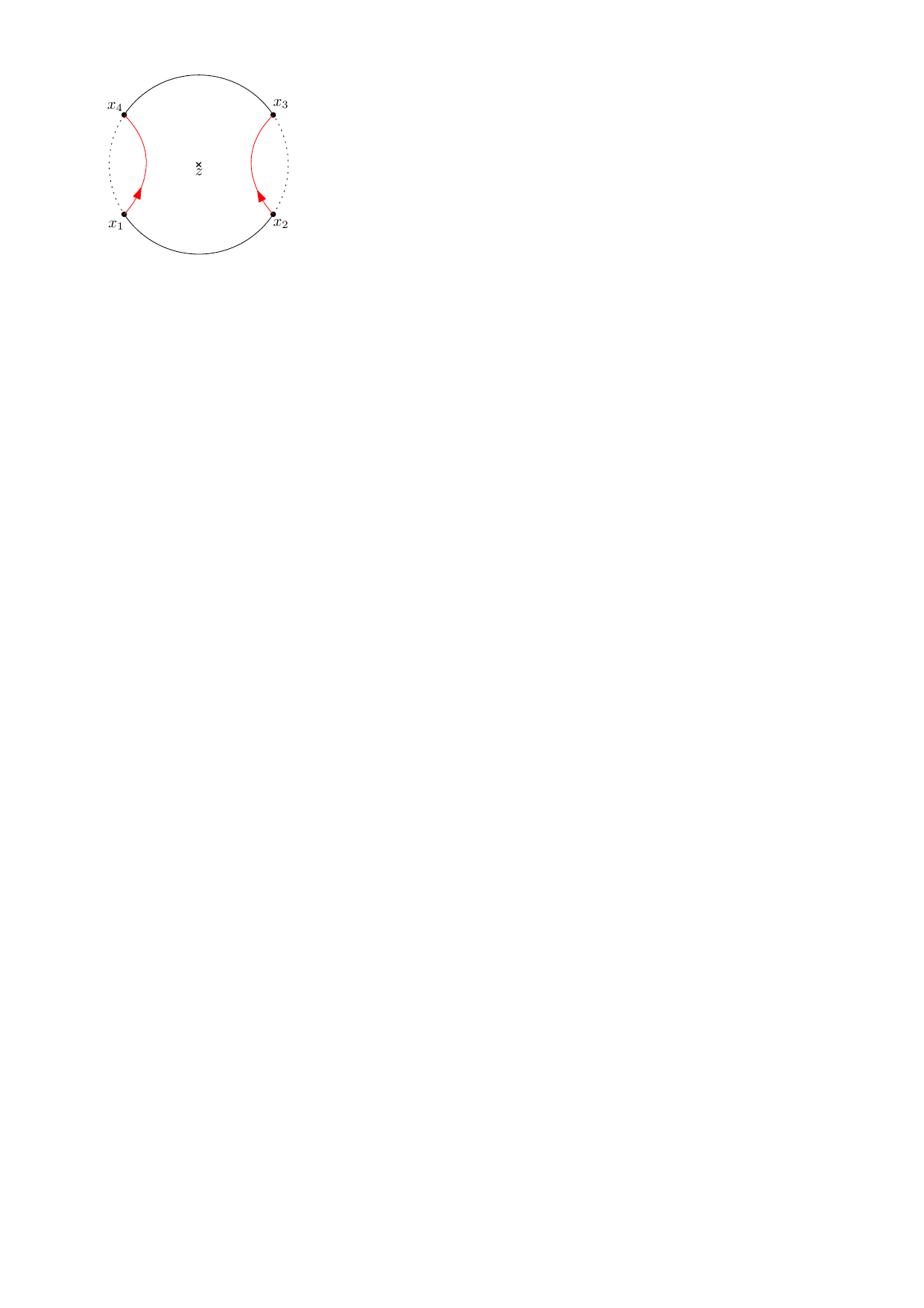}
\end{center}
\caption{}
\end{subfigure}
$\quad$
\begin{subfigure}[b]{0.2\textwidth}
\begin{center}
\includegraphics[width=\textwidth]{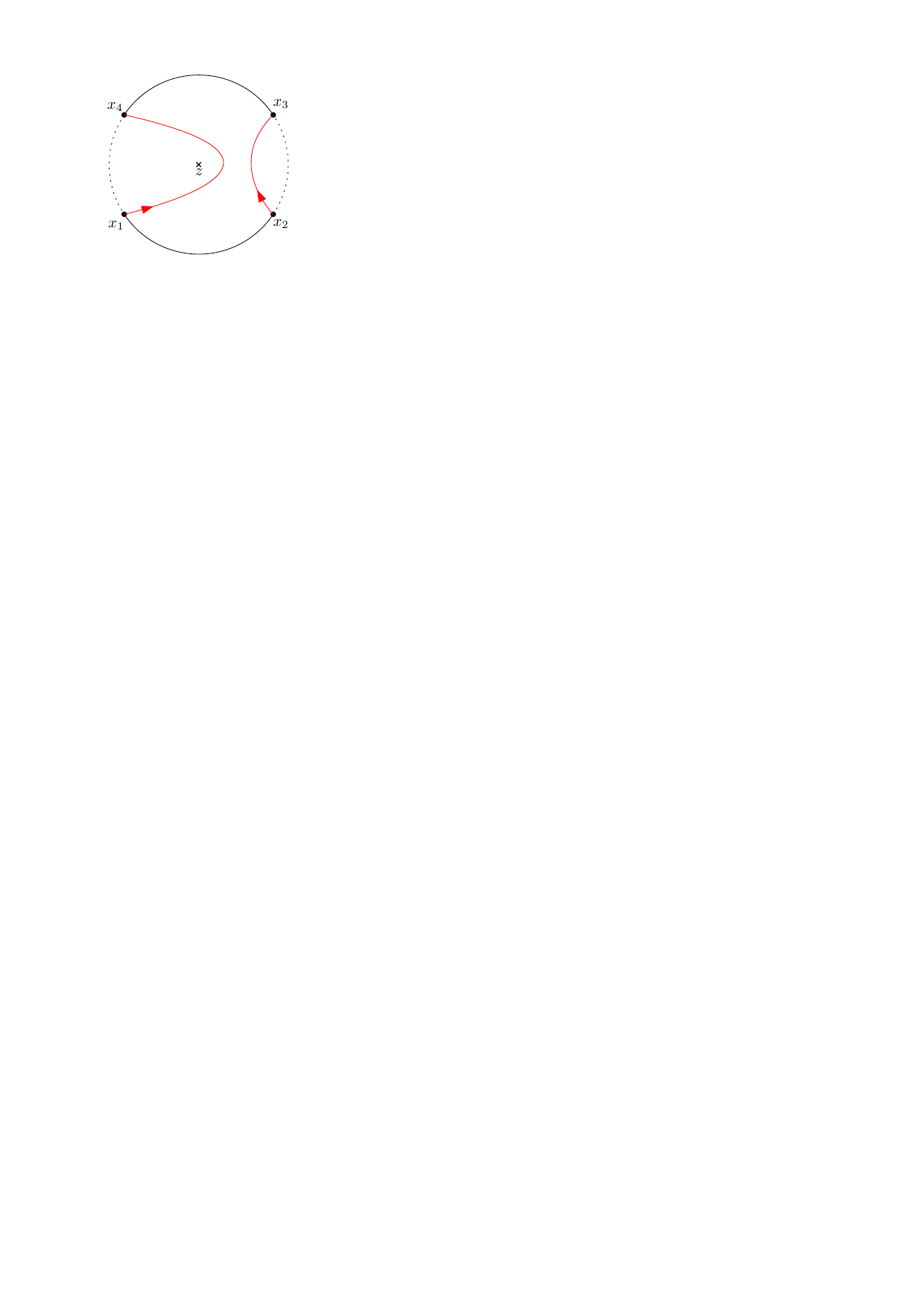}
\end{center}
\caption{}
\end{subfigure}
$\quad$
\begin{subfigure}[b]{0.2\textwidth}
\begin{center}
\includegraphics[width=\textwidth]{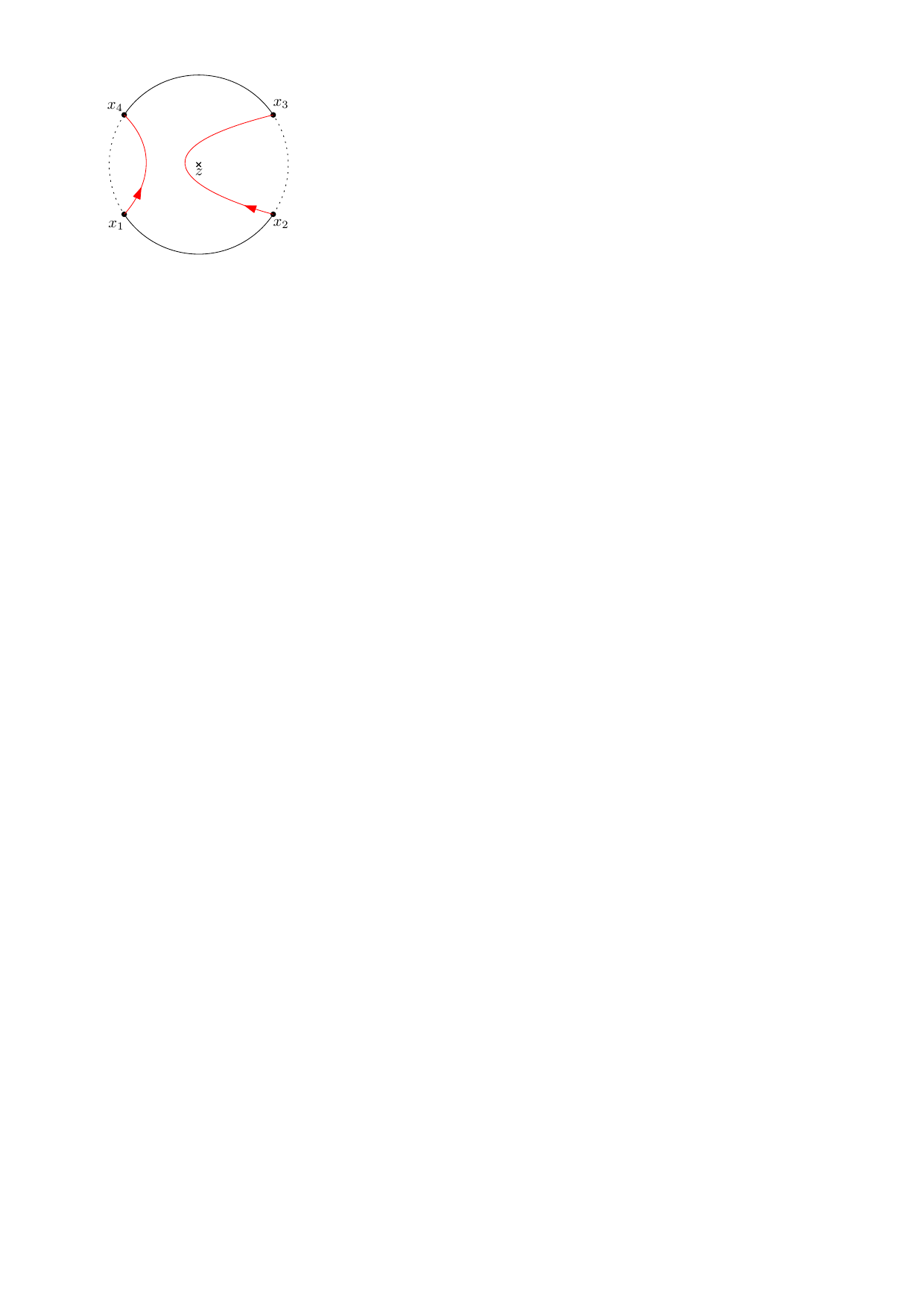}
\end{center}
\caption{}
\end{subfigure}
\caption{\label{fig::Wetaz} 
When $N=2$, there are two possibilities for the link pattern among the four marked points $\{x_1, x_2, x_3, x_4\}$: $\{\{1,2\}, \{3,4\}\}$ and $\{\{1,4\}, \{2,3\}\}$. For each connectivity, the two simple curves $\eta^{(1)}, \eta^{(2)}$ divide the domain into three connected components. If we consider the link pattern together with the location of $z$, there are six possibilities. The event $\mathcal{W}(\bs{\eta};z)$ is the union of the cases in (b), (c) and (d).  
}
\end{figure}

We give the formal definition of the event $\mathcal{W}(\bs{\eta};z)$ for $\kappa\in (0,6]$ in Definition~\ref{def::Wetaz}. When $\kappa\le 4$, it is the same as defined above. When $\kappa\in (4,6]$, as the curves have touchings, the description is more complicated but the idea is the same. 
This is the continuum analogue of the discrete event in Definition~\ref{def::wired_event}. 

\begin{definition}\label{def::Wetaz}
Fix $N\ge 1, \alpha\in\LP_N$ and a polygon with an interior point $(\Omega; x_1, \ldots, x_{2N}; z)$. Fix $\kappa\in (0,6]$ and suppose $(\eta^{(1)}, \ldots, \eta^{(N)})\sim \QQ_{\alpha}$ is global $N$-$\SLE_{\kappa}$ associated to $\alpha$ in $(\Omega; x_1, \ldots, x_{2N})$. We order $\alpha$ as in~\eqref{eqn::linkpattern_order} and orient $\eta^{(j)}$ from $x_{a_j}$ to $x_{b_j}$. We denote by $\mathcal{W}(\bs{\eta}; z)$ the even that $z$ stays to the right of (resp. to the left of) $\eta^{(j)}$ if $a_j$ is odd (resp. if $a_j$ is even) for all $\eta^{(j)}$ such that $\partial\Omega_{\bs{\eta}}(z)\cap\eta^{(j)}\neq\emptyset$.  
For $\mathfrak{r}\in\R$, we define 
	\begin{equation}\label{eqn::solutions_evenr_wired}
		\LZalpharwired(\Omega; x_1, \ldots, x_{2N}; z)=\LZ_{\alpha}(\Omega; x_1, \ldots, x_{2N})\E_{\alpha}\left[\one\{\mathcal{W}(\bs{\eta};z)\}\CR\left(\Omega\setminus\bs{\eta}; z\right)^{-\mathfrak{r}}\right]. 
	\end{equation} 
	Then the expectation in~\eqref{eqn::solutions_evenr_wired} is finite when $\mathfrak{r}<1-\kappa/8$. We denote by 
	\[\QQalpharwired=\QQalpharwired(\Omega; x_1, \ldots, x_{2N}; z)\]
the probability measure of $(\eta^{(1)}, \ldots, \eta^{(N)})\sim\QQ_{\alpha}$ global $N$-$\SLE_{\kappa}$ associated to $\alpha$ in polygon $(\Omega; x_1, \ldots, x_{2N})$ weighted by 
\begin{equation*}
\frac{\LZ_{\alpha}(\Omega; x_1, \ldots, x_{2N})}{\LZalpharwired(\Omega; x_1, \ldots, x_{2N}; z)}\one\{\mathcal{W}(\bs{\eta};z)\}\CR(\Omega\setminus\bs{\eta}; z)^{-\mathfrak{r}}. 
\end{equation*}
\end{definition}

\begin{lemma} \label{lem::LZalpharwired}
Assume the same notations as in Definition~\ref{def::Wetaz}. 
\begin{itemize}
\item For $(\theta_1, \ldots, \theta_{2N})\in\LX_{2N}$, if we write 
	\begin{equation*}%\label{eqn::LZalphar_radialcoordinates_wired}
		\LZalpharwired(\theta_1, \ldots, \theta_{2N})=\LZalpharwired(\U; \exp(\ii\theta_1), \ldots, \exp(\ii\theta_{2N});0),
	\end{equation*}
	then $\LZalpharwired: \LX_{2N}\to \R_{>0}$ satisfies the system of radial BPZ equations~\eqref{eqn::radialBPZ} with
	$\aleph=\frac{(6-\kappa)(\kappa-2)}{8\kappa}-\mathfrak{r}$.
\item For $(\theta_1, \ldots, \theta_{2N})\in\LX_{2N}$, suppose $(\eta^{(1)}, \ldots, \eta^{(N)})\sim\QQalpharwired$ in $(\U; \exp(\ii\theta_1), \ldots, \exp(\ii\theta_{2N}); 0)$. Then the law of $\eta^{(1)}$ under $\QQalpharwired$ is the same as radial $\SLE_{\kappa}$ in $(\U; \exp(\ii\theta_1); 0)$ weighted by the following local martingale, up to the first time $\exp(\ii\theta_{2})$ or $\exp(\ii\theta_{2N})$ is disconnected from the origin: 
\begin{equation}\label{eqn::mart_multipleSLE_CR_wired}
M_t(\LZalpharwired)=g_t'(0)^{\mathfrak{r}-\tilde{h}}\times\prod_{j=2}^{2N}\phi_t'(\theta_j)^h\times\LZalpharwired(\xi_t, \phi_t(\theta_2), \ldots, \phi_t(\theta_{2N})). 
\end{equation}  
\end{itemize}	
	\end{lemma}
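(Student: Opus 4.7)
The plan mirrors the proof of Lemma~\ref{lem::marginal_LZalphar}: I will identify the marginal law of $\eta^{(1)}$ under $\QQalpharwired$ as radial $\SLE_\kappa$ reweighted by an explicit local martingale $M_t(\LZalpharwired)$, and read off the radial BPZ equation from the vanishing of its drift. Finiteness of the expectation in~\eqref{eqn::solutions_evenr_wired} is immediate from Lemma~\ref{lem::finite_expectation}: since $\one\{\mathcal{W}(\bs{\eta};z)\}\le 1$, we have $\LZalpharwired\le \LZalphar$, and the latter is finite when $\mathfrak{r}<1-\kappa/8$.

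For the marginal-law statement I reduce to $\Omega=\U$, $z=0$. Taking $\bs{\eta}\sim\QQ_\alpha$ and running $\eta=\eta^{(1)}$ by capacity, let $g_t\colon U_t\to\U$ be the uniformizing map of the component $U_t$ of $\U\setminus\eta_{[0,t]}$ containing the origin, normalized by $g_t(0)=0$ and $g_t'(0)=\ee^t$. By the conformal invariance and domain Markov property of global $N$-$\SLE_\kappa$, conditionally on $\eta_{[0,t]}$ the image $\tilde{\bs{\eta}}:=g_t(\bs{\eta}\setminus\eta_{[0,t]})$ is a global $N$-$\SLE_\kappa$ in $\U$ with link pattern $\alpha$ and marked points $(\exp(\ii\xi_t),\exp(\ii\phi_t(\theta_2)),\ldots,\exp(\ii\phi_t(\theta_{2N})))$. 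Since $g_t$ extends to an orientation-preserving homeomorphism sending $0$ to $0$, the defining data of $\mathcal{W}$ are preserved,
\begin{equation*}
\one\{\mathcal{W}(\bs{\eta};0)\}=\one\{\mathcal{W}(\tilde{\bs{\eta}};0)\},
\end{equation*}
and by conformal covariance of the conformal radius, $\CR(\U\setminus\bs{\eta};0)=\ee^{-t}\CR(\U\setminus\tilde{\bs{\eta}};0)$. Combining these with the definition of $\LZalpharwired$ yields
\begin{equation*}
\E_\alpha\!\left[\one\{\mathcal{W}(\bs{\eta};0)\}\CR(\U\setminus\bs{\eta};0)^{-\mathfrak{r}}\cond \eta_{[0,t]}\right]=\ee^{\mathfrak{r}t}\frac{\LZalpharwired(\xi_t,\phi_t(\theta_2),\ldots,\phi_t(\theta_{2N}))}{\LZ_\alpha(\xi_t,\phi_t(\theta_2),\ldots,\phi_t(\theta_{2N}))}=\frac{M_t(\LZalpharwired)}{M_t(\LZ_\alpha)},
\end{equation*}
with $M_t(\LZ_\alpha)$ as in~\eqref{eqn::globalnSLE_mart_cor}. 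By Lemma~\ref{lem::globalnSLE_mart}, the marginal law of $\eta^{(1)}$ under $\QQ_\alpha$ is radial $\SLE_\kappa$ reweighted by $M_t(\LZ_\alpha)$; hence under $\QQalpharwired$ it is radial $\SLE_\kappa$ reweighted by $M_t(\LZalpharwired)$, giving the second bullet.

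The PDE (first bullet) then follows exactly as in the last paragraph of the proof of Lemma~\ref{lem::marginal_LZalphar}. Since $M_t(\LZalpharwired)$ is a local martingale for the underlying radial $\SLE_\kappa$, Itô's formula applied to~\eqref{eqn::mart_multipleSLE_CR_wired} (carried out as in~\eqref{eqn::mart_multipleSLE_aux2}) forces its drift to vanish. The only change compared with Lemma~\ref{lem::marginal_LZalphar} is that the exponent $\mathfrak{r}-\tilde{h}$ of $g_t'(0)$ in~\eqref{eqn::mart_multipleSLE_CR_wired} contributes an extra constant $(\mathfrak{r}-\tilde{h})$ to the drift, so the resulting PDE is~\eqref{eqn::radialBPZ_evenr2} with $\aleph=\tilde{h}-\mathfrak{r}=\frac{(6-\kappa)(\kappa-2)}{8\kappa}-\mathfrak{r}$, first in the variable $\theta_1$. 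Hypoellipticity of the radial BPZ operator upgrades this distributional identity to a pointwise one with $\LZalpharwired$ smooth. For each $j\ge 2$ the corresponding equation follows by running $\eta^{(s)}$ in place of $\eta^{(1)}$ for some $s$ with $a_s=j$ or $b_s=j$ (after orienting $\eta^{(s)}$ so that $x_j$ is its starting endpoint), using the resampling property of global $N$-$\SLE_\kappa$.

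The main obstacle is the invariance step $\one\{\mathcal{W}(\bs{\eta};0)\}=\one\{\mathcal{W}(\tilde{\bs{\eta}};0)\}$. For $\kappa\in(0,4]$ the curves are simple and pairwise disjoint, so $\mathcal{W}$ is manifestly a topological condition on $(\bs{\eta},0)$, preserved by any orientation-preserving homeomorphism fixing $0$. For $\kappa\in(4,6]$ the curves have self- and mutual touchings and the description of $\mathcal{W}$ in Definition~\ref{def::Wetaz} is more delicate: one has to verify that ``$0$ lies on the prescribed side of every $\eta^{(j)}$ with $\partial\Omega_{\bs{\eta}}(0)\cap\eta^{(j)}\neq\emptyset$'' can be read off purely from the topological data of $\U\setminus\bs{\eta}$, so that it is preserved by $g_t$. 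I expect this to reduce to careful bookkeeping of the boundary arcs of the connected components of $\U\setminus\bs{\eta}$, paralleling the discrete wired-event analysis, and to require no new analytic input beyond what enters the proof of Lemma~\ref{lem::marginal_LZalphar}.
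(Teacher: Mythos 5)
Your proof is correct and follows essentially the same route as the paper, whose own proof of this lemma is the one-line remark that it can be proved in the same way as Lemma~\ref{lem::marginal_LZalphar}; you carry out that reduction explicitly and, appropriately, isolate the one new point (conformal invariance of the event $\mathcal{W}(\bs{\eta};z)$, needed for the conditional-expectation identity) as the only thing to check beyond the earlier argument. One cosmetic slip: the constant $(\mathfrak{r}-\tilde{h})$ in the drift is not a change relative to Lemma~\ref{lem::marginal_LZalphar} (which already has $g_t'(0)^{\mathfrak{r}-\tilde{h}}$), but only relative to the $\mathfrak{r}=0$ pure-partition-function case; the identification $\aleph=\tilde{h}-\mathfrak{r}$ is nonetheless correct.
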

	\begin{proof}
	This can be proved in the same way as Lemma~\ref{lem::marginal_LZalphar}. 
	\end{proof}

\section{FK-Ising model and proof of Theorem~\ref{thm::FKIsing_cvg}}
\label{sec::FKIsing}

This section is organized as follows. We first give preliminaries on random-cluster models in Section~\ref{subsec::pre_rcm} and give preliminaries on the FK-Ising model in Section~\ref{subsec::con_inv_FK}. Then we complete the proof of Theorem~\ref{thm::FKIsing_cvg} in Section~\ref{subsec::FKIsing_proof}. To simplify the notation, we write $f \lesssim g$ if $f/g$ is bounded by a finite constant from above, and write $f\asymp g$ if $f\lesssim g$ and $g\lesssim f$. For $z\in \mathbb{C}$ and $0<r_1<r_2$, define 
\[B_{r_1}(z):=\{w\in \mathbb{C}: |w-z|<r_1\} \quad\text{and}\quad A_{r_1,r_2}(z):=\{w\in \mathbb{C}: r_1<|w-z|<r_2\}. \]

\subsection{Preliminaries on random-cluster models}
\label{subsec::pre_rcm}
\paragraph*{Random-cluster model.}
Let $\graph = (V(\graph), E(\graph))$ be a finite subgraph of $\Z^2$.
A random-cluster {configuration} 
$\omega=(\omega_e)_{e \in E(\graph)}$ is an element of $\{0,1\}^{E(\graph)}$.
An edge $e \in E(\graph)$ is called {open} (resp.~{closed}) if $\omega_e=1$ (resp.~$\omega_e=0$).
We denote by $o(\omega)$ (resp.~$c(\omega)$) the number of open (resp.~closed) edges in~$\omega$.

We are interested in the connectivity properties of the graph $\omega$ with various boundary conditions. 
The maximal connected\footnote{Two vertices $z$ and $w$ are said to be {connected} by $\omega$ if there exists a sequence  $\{z_j \colon 0\le j\le l\}$  of vertices such that
	$z_0 = z$ and $z_l = w$, and each edge $\edge{z_j}{z_{j+1}}$ is open in $\omega$ for $0 \le j < l$.} components of $\omega$ are called {clusters}.
The boundary conditions encode how the vertices are connected outside of $\graph$.
Precisely, by a {boundary condition} $\bssymb$ we refer to a partition $\bssymb_1 \sqcup \cdots \sqcup \bssymb_m$ of the boundary $\partial \graph$.
Two vertices $z,w \in \partial \graph$ are said to be {wired} in $\bssymb$ if $z,w \in \bssymb_j$ for some common $j$. 
In contrast, {free} boundary segments comprise vertices that are not wired with any other vertex (so the corresponding part $\pi_j$ is a singleton).  
We denote by $\omega^{\bssymb}$
the (quotient) graph obtained from the configuration $\omega$ by identifying the wired vertices in $\bssymb$.

Finally, the {random-cluster model} on $\graph$ with edge-weight $p\in [0,1]$, cluster-weight $q>0$,
and boundary condition $\bssymb$, is the probability measure $\smash{\PRCM^{\bssymb}_{p,q,\graph}}$ on
the set $\{0,1\}^{E(\graph)}$ of configurations $\omega$  defined by
\begin{align*}
	\PRCM^{\bssymb}_{p,q,\graph}[\omega] 
	:= \; & \frac{p^{o(\omega)}(1-p)^{c(\omega)}q^{k(\omega^{\bssymb})}}{\underset{\varpi \in \{0,1\}^{E(\graph)}}{\sum} p^{o(\varpi)}(1-p)^{c(\varpi)}q^{k(\varpi^{\bssymb})} } ,
\end{align*}
where $k(\omega^{\bssymb})$ is the number of 
connected components of the graph $\omega^{\bssymb}$.
For $q=2$, this model is also known as the {FK-Ising model}, while for $q=1$, it is simply the Bernoulli bond percolation (assigning independent values for each $\omega_e$). For $A,B\subseteq G$, we write $\{A\leftrightarrow B\}$ for the event that there exists an open path connecting $A$ to $B$; if $A=\{v\}$ for some vertex $v$, then we write $\{v\leftrightarrow B\}$ for the event $\{\{v\}\leftrightarrow B\}$.

In the present article, we focus on the random-cluster model on finite subgraphs of the square lattice $\mathbb{Z}^2$, or the scaled square lattice $\delta\mathbb{Z}^2$.
It has been proven for the range $q \in [1,4]$ in~\cite{DuminilSidoraviciusTassionContinuityPhaseTransition} 
that when the edge-weight is chosen suitably, namely as (the critical, self-dual value)
\begin{align*}% \label{eq: pcrit}
	p = p_c(q) := \frac{\sqrt{q}}{1+\sqrt{q}} ,
\end{align*}
then the random-cluster model exhibits a {continuous phase transition}.

\paragraph*{RSW estimates.}

For $1\leq q< 4$ and $p=p_c(q)$, we have the following strong RSW estimates.  For a discrete quad $(G;a,b,c,d)$, we denote by $L=L(G; a, b, c, d)$ the discrete extremal distance between $(ab)$ and $(cd)$ in $G$; see~\cite[Section~6]{ChelkakRobustComplexAnalysis}. The discrete extremal distance is uniformly comparable to its continuous counterpart, i.e., the classical extremal distance.

\begin{lemma}\textnormal{\cite[Theorem~1.2]{DCMTRCMFractalProperties}} \label{lem::RSW}
	Let $q\in [1,4)$. 	For each $L_0>0$, there exists $c(L_0,q)>0$ such that the following holds: for any discrete quad $(G;a,b,c,d)$ with $L(G; a, b, c, d)\le L_0$ and any boundary condition $\pi$, we have
	\begin{equation*}
		\mu_{p_c(q), q,G}^{\pi} \left[(ab)\leftrightarrow(cd)\right]\geq c(L_0,q).
	\end{equation*}
\end{lemma}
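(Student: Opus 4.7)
The plan is to reduce to a rectangle-crossing estimate under the free boundary condition, and then invoke the strong RSW theory that has been developed for critical random-cluster models with $q \in [1,4)$.

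First, I would use that the FKG inequality holds for $q \geq 1$, and in particular that $\mu_{p_c(q),q,G}^{\pi}$ is stochastically monotone in the boundary condition $\pi$: among all boundary partitions, the free boundary condition (all singletons) minimizes the probability of any increasing event. Since $\{(ab)\leftrightarrow (cd)\}$ is increasing, it suffices to establish the lower bound $c(L_0,q)$ for $\pi = \mathrm{free}$.

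Second, I would leverage the comparability between the discrete extremal distance $L(G;a,b,c,d)$ and its continuous counterpart (\cite[Section~6]{ChelkakRobustComplexAnalysis}): any discrete quad with $L\leq L_0$ admits a covering by a bounded number of subrectangles of bounded aspect ratio, with the extremal distance controlling both the number of subrectangles and their shape. Crossings of these subrectangles can then be chained together via FKG (since rectangle crossings are increasing events) to produce a crossing of the original quad, at the cost of a uniform multiplicative factor depending only on $L_0$. At $p = p_c(q)$, uniform lower bounds on rectangular crossing probabilities under free boundary conditions follow from the strong RSW theory in~\cite{DuminilSidoraviciusTassionContinuityPhaseTransition} and~\cite{DCMTRCMFractalProperties}, together with the self-duality of the model at $p_c$.

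The main obstacle is the uniform rectangle-crossing estimate under free boundary conditions itself, whose proof requires self-duality at $p_c$ together with delicate renormalization arguments specific to the range $q \in [1,4)$; this is the deep technical input from~\cite{DCMTRCMFractalProperties}. Since the cited lemma is precisely~\cite[Theorem~1.2]{DCMTRCMFractalProperties}, we invoke it here as a black box rather than reproducing the proof.
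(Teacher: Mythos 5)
The paper does not prove this lemma at all: it is stated as a direct citation of \cite[Theorem~1.2]{DCMTRCMFractalProperties}, and no argument is given in the text. Your proposal correctly recognizes this and, after sketching a plausible reduction (monotonicity in boundary conditions via FKG, comparability of discrete and continuous extremal length, chaining rectangle crossings, and the deep rectangle-crossing input from the cited strong RSW theory), ultimately invokes the cited theorem as a black box, which is exactly what the paper does. So you and the paper take the same route, namely citing the external result; your additional sketch of the underlying proof strategy is reasonable but is not something the paper attempts to reproduce.
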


\paragraph*{Discrete polygons.} A {discrete (topological) polygon}
is a finite simply connected subgraph 
of $\Z^2$, or $\delta \Z^2$, 
with $2N$ marked boundary points in counterclockwise order. We now give its precise definition. 

\begin{enumerate}[leftmargin=*]
	\item  
	First, we define the {medial polygon}. Edges of the medial lattice $(\Z^2)^\diamond$ are oriented as follows: edges of each face containing a vertex of $\Z^2$ are oriented clockwise, and edges of each face containing a vertex of $(\Z^2)^{\bullet}$ are oriented counterclockwise. 
	Let $x_1^\diamond,\ldots, x_{2N}^\diamond$ be $2N$ distinct medial vertices. Let $(x_1^\diamond \, x_2^\diamond), (x_2^\diamond \, x_3^\diamond), \ldots , (x_{2N}^\diamond  \, x_{1}^\diamond)$ be $2N$ oriented paths on $(\Z^2)^\diamond$ satisfying the following conditions\footnote{Throughout, we use the convention that $x_{2N+1}^\diamond := x_{1}^\diamond$.}: 
	\begin{itemize}[leftmargin=1.0em]
		\item 
		the path $(x_{2i-1}^\diamond \, x_{2i}^\diamond)$ consists of counterclockwise oriented edges for $1\leq i \leq N$; 
		
		\item  
		the path $(x_{2i}^\diamond \, x_{2i+1}^\diamond)$ consists of clockwise oriented edges for $1\leq i \leq N$; 
		
		\item  
		all paths are edge-self-avoiding and satisfy $(x_{i-1}^\diamond \, x_i^\diamond) \cap (x_i^\diamond \, x_{i+1}^\diamond) = \{x_i^\diamond\}$ for $1\leq i \leq 2N$;

		\item  
		if $j\notin \{i+1,i-1\}$, then $(x_{i-1}^\diamond \, x_{i}^\diamond) \cap (x_{j-1}^\diamond \, x_j^\diamond) = \emptyset$; 
		
		\item  
		the infinite connected component of 
		$(\Z^2)^\diamond\setminus \smash{\bigcup_{i=1}^{2N}} (x_i^\diamond \, x_{i+1}^\diamond)$ 
		is on the right of the oriented path~$(x_1^\diamond \, x_2^\diamond)$. 
	\end{itemize}
	Given $\{(x_i^\diamond \, x_{i+1}^\diamond) \colon 1\leq i\leq 2N\}$, the medial polygon $(\Omega^\diamond; x_1^\diamond,\ldots, x_{2N}^\diamond)$ is defined 
	as the subgraph of $(\Z^2)^\diamond$ induced by the vertices lying on or enclosed by the non-oriented loop obtained by concatenating all of $(x_i^\diamond \, x_{i+1}^\diamond)$. 
	For each $i \in \{1,2,\ldots,2N\}$, the {outer corner}  $y_{i}^{\diamond}\in (\mathbb{Z}^2)^\diamond\setminus\Omega^\diamond$ is defined to be a medial vertex adjacent to $x_i^\diamond$, and the {outer corner edge} $e_i^\diamond$ is defined to be the medial edge connecting them.
	
	\item  
	Second, we define the {primal polygon} 
	$(\Omega;x_1,\ldots,x_{2N})$ induced by $(\Omega^\diamond;x_1^\diamond,\ldots,x_{2N}^\diamond)$ as follows: 
	\begin{itemize}[leftmargin=1.0em]
		\item $\Omega$ is a subgraph of $\mathbb{Z}^2$;
		\item 
		its edge set $E(\Omega)$ consists of edges passing through endpoints of medial edges in 
		$E(\Omega^\diamond)\setminus \smash{\bigcup_{i=1}^N} (x_{2i}^\diamond \, x_{2i+1}^\diamond)$; 
		
		\item  
		its vertex set $V(\Omega)$ consists of endpoints of edges in $E(\Omega)$; 
		
		\item  
		the marked boundary vertex $x_i$ is defined to be the vertex in $\Omega$ nearest to $x_i^\diamond$ for each $1\leq i\leq 2N$; 
		
		\item  
		the arc $(x_{2i-1} \, x_{2i})$ is the set of edges whose midpoints are vertices in $(x_{2i-1}^\diamond \, x_{2i}^\diamond)\cap \partial \Omega^\diamond$ for $1\leq i\leq N$.
	\end{itemize}

	\item  
	Third, we define the {dual polygon} $(\Omega^{\bullet};x_1^{\bullet},\ldots,x_{2N}^{\bullet})$ induced by $(\Omega^\diamond; x_1^\diamond,\ldots,x_{2N}^\diamond)$ in a similar way: $\Omega^{\bullet}$ is the subgraph of $(\Z^2)^{\bullet}$ with 
	\begin{itemize}[leftmargin=1.0em]
		\item 
		edge set consisting of edges passing through endpoints of medial edges in $E(\Omega^\diamond)\setminus \smash{\bigcup_{i=1}^{N}} (x_{2i-1}^\diamond \, x_{2i}^\diamond)$; 
		\item and vertex set consisting of the endpoints of these edges. 
	\end{itemize}
	For each $i \in \{1,2,\ldots,2N\}$, 
	the marked boundary vertex $x_i^{\bullet}$ is defined to be the vertex in $\Omega^{\bullet}$ nearest to $x_i^\diamond$; and 
	for each $i \in \{1,2,\ldots,N\}$, 
	the boundary arc $(x_{2i}^{\bullet} \, x_{2i+1}^{\bullet})$ is defined to be 
	the set of edges whose midpoints are vertices in $(x_{2i}^\diamond \, x_{2i+1}^\diamond)\cap \Omega^\diamond$. 
\end{enumerate}

\paragraph*{Admissible domains} We say a simply connected subgraph $\Omega^{\delta,\diamond}$ of $(\delta\mathbb{Z})^{\diamond}$ is an {admissible medial domain} if its boundary  consists of counterclockwise oriented edges. Suppose that $\Omega^{\delta,\diamond}$ is an admissible domain. Then we denote the law of the critical FK-Ising model on the primal domain $\Omega^{\delta}$ with the wired boundary condition by $\mathbb{P}_{\Omega,\mathrm{w}}^{\delta}$.

\paragraph*{Boundary conditions.}
In this work, 
we shall focus on the critical FK-Ising model on the primal polygon $(\Omega;x_1,\ldots,x_{2N}) = (\Omega^\delta; x_1^\delta,\ldots,x_{2N}^\delta)$, with the alternating boundary condition~\eqref{eqn::bc}:
\begin{align*}
	(x_{2j-1}^{\delta} \, x_{2j}^{\delta}) \textnormal{ is wired,} \qquad \textnormal{ for all } j \in\{1,2,\ldots, N\} ,
\end{align*}
and these $N$ wired arcs are not wired outside of $\Omega^{\delta}$. This boundary condition is encoded in the unnested link pattern: 
\[\unnested=\{\{1,2\},\{3,4\},\ldots ,\{2N-1,2N\}\}.\]                                                    We denote by $\PP^{\delta}_{\Omega}$ the law, and by $\mathbb{E}_{\Omega}^{\delta}$ the expectation, of the critical model on $(\Omega^{\delta}; x_{1}^{\delta},\ldots,x_{2N}^{\delta})$ with the boundary condition described above, where the cluster-weight has the fixed value $q=2$ in this section. 

\paragraph*{Loop representation and interfaces.}
Let $\omega \in \{0,1\}^{E(\Omega^\delta)}$ be a configuration 
with the alternating boundary condition~\eqref{eqn::bc} on the primal polygon $(\Omega^\delta; x_1^\delta,\ldots,x_{2N}^\delta)$. 
The dual configuration $\omega^{\bullet}$ on $\Omega^{\bullet}$ induced by $\omega$  is defined by $\omega^{\bullet}_e = 1 - \omega_e$. We say that an edge $e \in E(\Omega^{\bullet})$ is {dual-open} (resp.~{dual-closed}) if $\omega^{\bullet}_e=1$ (resp.~$\omega^{\bullet}_e=0$).
Given $\omega$, we can draw self-avoiding 
paths on the medial graph $\Omega^{\delta, \diamond}$ between $\omega$ and $\omega^{\bullet}$ as follows: 
a path arriving at a vertex of $\Omega^{\delta,\diamond }$ always makes a turn of $\pm\pi/2$, so as not to cross the open or dual-open edges through this vertex. 
The {loop representation} of $\omega$ consists of a number of loops and $N$ pairwise-disjoint and self-avoiding {interfaces} connecting the $2N$ outer corners $y_{1}^{\delta,\diamond}, \ldots,y_{2N}^{\delta,\diamond}$ of  the medial polygon $(\Omega^{\delta,\diamond};x_1^{\delta,\diamond},\ldots,x_{2N}^{\delta,\diamond})$. For each $i\in \{1,2,\ldots,2N\}$, we shall denote by $\eta_i^\delta$ the interface starting from the medial vertex $y_{i}^{\delta,\diamond}$
(and we also refer to it as the interface starting from the boundary point $x_{i}^{\delta,\diamond}$). We denote by $\vartheta^{\delta}$ the (random) link pattern of multiple interfaces $(\eta_1^{\delta},\ldots,\eta_{2N}^{\delta})$, which takes value in $\LP_N$. 
                                                                                                                                                                                                                                                          
Note that for the model on an admissible domain $\Omega^{\delta}$ with the wired boundary condition, we can also define its loop representation as above, which consists of interface loops only.

\paragraph*{Scaling limits}
We  need a topology for the interfaces, which we regard as (images of) continuous mappings from $[0,1]$ to $\C$ modulo reparameterization, i.e., planar oriented curves. 
For a simply connected domain $\Omega \subsetneq \C$, we will consider curves in $\overline{\Omega}$.
For definiteness, we map $\Omega$ onto the unit disc 
$\U := \{ z \in \C \colon |z| < 1 \}$: 
for this we shall fix\footnote{The metric~\eqref{eq::curve_metric} depends on the choice of the conformal map $\Phi$, but the induced topology does not.} 
any conformal map $\Phi$ from $\Omega$ onto $\U$. 
Then, we endow the curves with the metric
\begin{align} \label{eq::curve_metric} 
	\metric(\eta_1, \eta_2) 
	:= \inf_{\psi_1, \psi_2} \sup_{t\in [0,1]} |\Phi(\eta_1(\psi_1(t)))-\Phi(\eta_2(\psi_2(t)))| ,
\end{align}
where the infimum is taken over all increasing homeomorphisms $\psi_1, \psi_2 \colon [0,1]\to[0,1]$.
The space of continuous curves on $\overline{\Omega}$ modulo reparameterizations then becomes a complete separable metric space. Let $k\geq 1$, for two collections of curves $(\eta_1,\ldots,\eta_{k})$ and $(\gamma_1,\ldots,\gamma_{k})$, we define 
\begin{equation} \label{eqn::metric_multiple}
	\dist\left((\eta_1,\ldots,\eta_{k}),(\gamma_1,\ldots,\gamma_{k})\right):=\min_{1\leq j\leq k} \dist(\eta_j,\gamma_j). 
\end{equation}
We denote by $X_k$ the space of the collections of curves $(\eta_1,\ldots,\eta_{k})$ endowed with the metric~\eqref{eqn::metric_multiple}. 

We also need a topology for the collection of loops in the loop representation. An oriented continuous curve $\gamma:[0,1]\to \mathbb{C}$ with $\gamma(0)=\gamma(1)$ is called a loop. Then, we define a distance between two closed sets of loops, $\Gamma_1$ and $\Gamma_2$, as follows:
\begin{align} \label{eqn::LE_metric}
	\Dist(\Gamma_1,\Gamma_2):=\inf\{\epsilon>0: \forall \gamma_1\in \Gamma_1, \text{ }\exists \gamma_2\in \Gamma_2 \text{ s.t. } \metric(\gamma_1,\gamma_2)\leq \epsilon \text{ and vice versa}\}. 
\end{align}
The space of collections of loops with distance $\Dist$ is also complete and separable.

\paragraph*{Convergence of polygons.}
To investigate the scaling limit, we use three kinds of convergence of domains: convergence of domains in the Carath\'edory sense~\cite{Pommerenke} (used in Lemma~\ref{lem::limit_one_point} and Proposition~\ref{prop::CLE}), convergence of polygons in the close-Carath\'eodory sense~\cite{KarrilaConformalImage, KarrilaMultipleSLELocalGlobal} (used in Propositions~\ref{prop::cvg_global_interfaces} and~\ref{prop::cvg_proba}), and the convergence of polygons with an interior point in the close-Carath\'edory sense (used in Conjecture~\ref{conj::cvg_interfaces}).
Abusing notation, for a discrete polygon, we will occasionally denote by $\Omega^{\delta}$ also the open simply connected subset of $\C$ defined as the interior of the set $\overline{\Omega}^{\delta}$ comprising all vertices, edges, and faces of the polygon $\Omega^{\delta}$. A sequence of domains
$\Omega^{\delta}$ converges to $\Omega$ in the Carath\'{e}odory sense as $\delta\to0$ if there exist conformal maps $\varphi_{\delta}$ from $\Omega^{\delta}$ onto
$\U$, and a conformal map $\varphi$ from $\Omega$ onto $\U$, such that $\varphi_{\delta}^{-1}\to\varphi^{-1}$ locally uniformly on $\U$ as $\delta\to 0$.

\begin{definition} \label{def:closeCara}
	We say that a sequence of discrete polygons $(\Omega^{\delta}; x_1^{\delta}, \ldots, x_{2N}^{\delta})$  
	converges as $\delta \to 0$ to a polygon $(\Omega; x_1, \ldots, x_{2N})$ in the {close-Carath\'{e}odory sense} if 
	\begin{enumerate}
		\item  $x_{j}^{\delta}\to x_j$ for all $1\leq j\leq 2N$; and
		\item there exist conformal maps $\varphi_{\delta}$ from $\Omega^{\delta}$ onto $\mathbb{U}$ and a conformal map $\varphi$ from $\Omega$ onto $\mathbb{U}$, such that $\varphi_{\delta}^{-1}\to \varphi^{-1}$ locally uniformly on $\mathbb{U}$, and moreover $\varphi_{\delta}(x_j^{\delta})\to \varphi(x_j)$ for all $1\leq j\leq 2N$; and 
		 \item for a given reference point $u\in \Omega$ and small enough $r>0$, let $S_r$ be the arc of $\partial B(x_j,r)\cap\Omega$ disconnecting \textnormal{(}in $\Omega$\textnormal{)} $x_j$ from $u$ and from all other arcs of this set, then for small enough $r$ and $\delta$ (depending on $r$), the boundary point $x_j^{\delta}$ is connected to the midpoint of $S_r$ inside $\Omega^{\delta}\cap B(x_j,r)$.
	\end{enumerate}

If we also have $z^{\delta}\to z$ as $\delta\to 0$ for some $z^{\delta}\in \Omega^{\delta}$ and $z\in\Omega$, then we say that $(\Omega^{\delta}; x_1^{\delta}, \ldots, x_{2N}^{\delta};z^{\delta})$ converges as $\delta\to0$ to $(\Omega;x_1,\ldots,x_{2N};z)$ in the close-Carath\'edory sense.   
\end{definition}

\begin{lemma}\label{lem::FKIsing_tightness}
	Assume the same notations as in 
Conjecture~\ref{conj::cvg_interfaces}. 
	Fix $i\in\{1,2, \ldots, 2N\}$. The family of laws of $\{(\eta_1^{\delta},\ldots\eta_{2N}^{\delta})\}_{\delta>0}$ is  
	precompact in the space of curves with metric~\eqref{eqn::metric_multiple}.
	Furthermore, each $\eta_j$ in any subsequential limit $(\eta_1,\ldots,\eta_{2N})$ does not hit any other point in $\{x_1, x_2, \ldots, x_{2N}\}$ than its two endpoints, almost surely.
\end{lemma}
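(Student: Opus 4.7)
The plan is to apply the standard Kemppainen--Smirnov tightness framework (building on Aizenman--Burchard). For each interface $\eta_j^{\delta}$, tightness in the metric~\eqref{eq::curve_metric} follows once one verifies a uniform power-law bound on the probability that $\eta_j^{\delta}$ makes an unforced crossing of a conformal annulus. Such a bound is a direct consequence of the strong RSW estimate in Lemma~\ref{lem::RSW}: at each intermediate scale one produces a dual-open (or primal-open) circuit in the surrounding annulus with uniformly positive probability, independently of the state outside thanks to the FKG inequality, the comparison between boundary conditions, and the domain Markov property of the random-cluster model; iterating over dyadic scales then yields the required polynomial decay. This gives tightness of the marginal law of each $\eta_j^{\delta}$ in the curve metric~\eqref{eq::curve_metric}, and since there are only finitely many interfaces, the joint family $\{(\eta_1^{\delta},\ldots,\eta_{2N}^{\delta})\}_{\delta>0}$ is precompact in $X_{2N}$ under the metric~\eqref{eqn::metric_multiple}.

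For the non-hitting statement, I would argue by contradiction. Suppose that with positive probability a subsequential limit $\eta_j$ hits a marked point $x_k$ other than its two endpoints. Invoking the close-Carath\'eodory convergence of Definition~\ref{def:closeCara} to transfer this event back to the discrete level, the interface $\eta_j^{\delta}$ would, with probability bounded away from $0$ as $\delta\to 0$, come within distance $\eps$ of $x_k^{\delta}$ for every $\eps>0$. Using the $C^{1+\eps}$ regularity of $\partial\Omega$ at $x_k$, one can deploy a boundary RSW estimate in each dyadic annulus $A_{2^{-m-1},2^{-m}}(x_k)$: there is a uniformly positive chance of producing a dual circuit (when $x_k$ sits on a wired arc) or a primal circuit (when $x_k$ sits on a free arc) in the annulus separating $x_k$ from the rest of the interface. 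Chaining these independent events over logarithmically many scales via the domain Markov property forces the probability that $\eta_j^{\delta}$ revisits the inner ball to decay as a positive power of the scale, contradicting the assumption.

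The main obstacle is the second part near $x_k$, since the alternating boundary condition~\eqref{eqn::bc} changes character across each marked point. One must split the analysis according to whether $\eta_j^{\delta}$ would approach $x_k$ through a wired arc or a free arc, and in each case combine a half-plane boundary RSW input with a stochastic comparison of boundary conditions to handle the mixed regime on the two sides of $x_k$. The $C^{1+\eps}$ regularity at the marked points together with the close-Carath\'eodory convergence are precisely what permit transferring these discrete boundary estimates to the continuum non-hitting statement, and the resulting argument is essentially the one carried out in~\cite{KarrilaConformalImage, KarrilaMultipleSLELocalGlobal} for general critical models enjoying strong RSW; here it applies to the FK-Ising model via Lemma~\ref{lem::RSW}.
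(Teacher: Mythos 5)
Your argument correctly identifies the standard machinery (Kemppainen--Smirnov crossing bounds, RSW as in Lemma~\ref{lem::RSW}, the Karrila works), and this matches the ingredients the paper cites for the \emph{unconditioned} interfaces. However, the lemma, read together with Conjecture~\ref{conj::cvg_interfaces} and with how it is invoked in the proof of Theorem~\ref{thm::FKIsing_cvg}, concerns the laws of the interfaces \emph{conditioned on the one-arm event} $\LA^{\delta}$ in~\eqref{eqn::onearm_event_def}. Your proposal never touches the conditioning: every probability estimate you invoke is an estimate under $\PP^{\delta}_{\Omega}$, not under $\PP^{\delta}_{\Omega}[\,\cdot\mid\LA^{\delta}\,]$. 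This is precisely the point the paper's one-paragraph proof isolates: after observing that the unconditioned statement is standard, it notes that the same crossing and non-hitting estimates survive the conditioning \emph{because $\LA^{\delta}$ is an increasing event and the random-cluster model satisfies the FKG inequality}. That one sentence is the genuine content of the lemma in this paper's context, and it is missing from your write-up.

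There is also a finer issue hiding in the phrase ``one produces a dual-open (or primal-open) circuit.'' Once you condition on the increasing event $\LA^{\delta}$, FKG cuts both ways: probabilities of increasing events such as open circuits can only go up, while probabilities of decreasing events such as dual-open circuits can in principle go down. So under the conditioning, you cannot casually block with a dual-open circuit; the safe route is to block with \emph{open} circuits, which are exactly what the strong RSW bound of Lemma~\ref{lem::RSW} controls, and whose conditional probability is at least the unconditional one by FKG. A version of your argument that carefully chooses the primal (increasing) blocking event at each scale, and justifies uniformity via $\PP^{\delta}_{\Omega}[\text{open circuit}\mid\LA^{\delta}]\ge \PP^{\delta}_{\Omega}[\text{open circuit}]$, would close the gap and line up exactly with the paper's intended proof. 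As written, though, the core idea needed to pass from the unconditioned to the conditioned statement is absent.
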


\begin{proof}
	Without conditioning on the one-arm event~\eqref{eqn::onearm_event_def}, the proof is standard nowadays. 
	For instance, the case where $q=2$ is treated in~\cite[Lemmas~4.1 and~5.4]{IzyurovMultipleFKIsing}. 
	The main tools are RSW bounds from~\cite{DuminilCopinHonglerNolinRSWFKIsing, KemppainenSmirnovRandomCurves} 
	--- see also~\cite{KarrilaConformalImage, KarrilaMultipleSLELocalGlobal}. 
The case of general $q\in [1,4)$ follows from~\cite[Theorem~6]{DuminilSidoraviciusTassionContinuityPhaseTransition} and~\cite[Section~1.4]{DCMTRCMFractalProperties}. 
The argument  still works for the interfaces conditional on the one-arm event~\eqref{eqn::onearm_event_def} due to the facts that~\eqref{eqn::onearm_event_def} is an increasing event and that we have the FKG inequality. 
\end{proof}

\subsection{Preliminaries on FK-Ising model}\label{subsec::con_inv_FK}
We collect two results concerning the conformal invariance of FK-Ising multiple interfaces (Proposition~\ref{prop::cvg_global_interfaces}) and their connection probabilities (Proposition~\ref{prop::cvg_proba}). 

\begin{proposition}\textnormal{\cite[Proposition~1.4]{BeffaraPeltolaWuUniqueness}} \label{prop::cvg_global_interfaces}
Fix a polygon $(\Omega; x_1, \ldots, x_{2N})$ and suppose a sequence of medial domains $(\Omega^{\delta, \diamond}; x_1^{\delta, \diamond}, \ldots, x_{2N}^{\delta, \diamond})$ converges to $(\Omega; x_1, \ldots, x_{2N})$ in the close-Carath\'{e}odory sense. Consider the critical FK-Ising model on the primal domain $(\Omega^{\delta}; x_1^{\delta}, \ldots, x_{2N}^{\delta})$ with alternating boundary condition~\eqref{eqn::bc}.  Fix $\alpha\in\LP_N$. Then the law of the collection of multiple interfaces conditional on the event $\{\vartheta^{\delta}=\alpha\}$ converges weakly under the topology induced by $\dist$ in~\eqref{eqn::metric_multiple} to global $N$-$\mathrm{SLE}_{16/3}$ associated to $\alpha$ in $(\Omega;x_1,\ldots,x_{2N})$.
\end{proposition}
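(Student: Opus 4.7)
The plan is to follow the standard precompactness + resampling identification scheme, using as black boxes (i) convergence of a single FK-Ising interface to chordal $\SLE_{16/3}$ (Chelkak--Duminil-Copin--Hongler--Kemppainen--Smirnov), (ii) the uniqueness characterization of global $N$-$\SLE_{\kappa}$ for $\kappa\in(0,8)$ by its chordal resampling property, and (iii) the tightness of multiple interfaces given in Lemma~\ref{lem::FKIsing_tightness}.

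First I would establish precompactness. By Lemma~\ref{lem::FKIsing_tightness} the laws of $(\eta_1^{\delta},\ldots,\eta_{2N}^{\delta})$ are tight in $X_{2N}$, and each subsequential limit $(\eta_1,\ldots,\eta_{2N})$ consists of pairwise non-crossing curves with disjoint endpoints $x_1,\ldots,x_{2N}$. Since we further condition on the positive-probability event $\{\vartheta^{\delta}=\alpha\}$ (positivity follows from the RSW estimate in Lemma~\ref{lem::RSW} together with standard gluing of crossings along the prescribed connectivity $\alpha$), the conditioned family is also tight. Because connectivity among the endpoints is a continuous feature of collections of non-crossing curves joining distinct boundary points (here the close-Carath\'{e}odory convergence is used to keep the endpoints pinned to $x_1,\ldots,x_{2N}$ even in the limit), any subsequential limit lives in $\chamber_{\alpha}(\Omega;x_1,\ldots,x_{2N})$.

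Second I would identify each subsequential limit as a global $N$-$\SLE_{16/3}$. Fix $j\in\{1,\ldots,N\}$ and condition on the other interfaces $\{\eta^{(s,\delta)} \colon s\neq j\}$. By the domain Markov property of the random-cluster model, the law of $\eta^{(j,\delta)}$ given the other interfaces is that of a single critical FK-Ising interface in a random discrete Dobrushin domain with the two endpoints $x_{a_j}^{\delta},x_{b_j}^{\delta}$ and wired/free Dobrushin boundary conditions. I would then combine (a) the convergence of the other interfaces to their limits, (b) the fact that the complementary Dobrushin domain $\Omega\setminus\bigcup_{s\neq j}\eta^{(s)}$ containing the arc from $x_{a_j}$ to $x_{b_j}$ is close-Carath\'{e}odory approximated by its discrete analogues, and (c) the single-interface convergence theorem, to conclude that the conditional law of $\eta^{(j,\delta)}$ converges to chordal $\SLE_{16/3}$ in that domain from $x_{a_j}$ to $x_{b_j}$. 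Passing conditional convergence through a joint weak limit (standard Skorokhod/coupling argument) yields the resampling property: each limiting curve $\eta^{(j)}$ given the others is chordal $\SLE_{16/3}$ in the corresponding connected component. This is precisely the definition of global $N$-$\SLE_{16/3}$ associated to $\alpha$.

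Third I would invoke uniqueness of global $N$-$\SLE_{\kappa}$ for $\kappa\in(0,8)$ (as cited in the paper) to conclude that all subsequential limits are equal in law, hence the whole sequence converges. The main obstacle is step two: ensuring that the conditional single-interface convergence theorem applies to the random Dobrushin domain $\Omega\setminus\bigcup_{s\neq j}\eta^{(s)}$, since one needs close-Carath\'{e}odory convergence of the discrete random domains and control of their regularity near the two marked points. This is handled via the RSW-based arguments of~\cite{KemppainenSmirnovRandomCurves} together with the non-hitting property of the limit curves at the other marked points already supplied by Lemma~\ref{lem::FKIsing_tightness}, exactly as in the single-interface and paired-interface treatments in~\cite{IzyurovMultipleFKIsing, KarrilaMultipleSLELocalGlobal}.
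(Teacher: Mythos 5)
The paper does not give its own proof of this statement; it is imported verbatim as Proposition~1.4 from~\cite{BeffaraPeltolaWuUniqueness}, and the argument there is exactly the scheme you describe: tightness of the collection of interfaces via the Kemppainen--Smirnov framework, identification of each subsequential limit via the domain Markov property and the single-interface convergence result, and then the uniqueness of global $N$-$\SLE_{\kappa}$ via the resampling characterization. Your sketch is therefore correct and follows the same route as the cited source. The one place where you are slightly glib is step two for $\kappa=16/3>4$: the curves are non-simple, so one must be careful about which connected component of $\Omega\setminus\bigcup_{s\neq j}\eta^{(s)}$ carries the resampled curve (several components may abut both boundary arcs, and interfaces can touch each other and $\partial\Omega$), and the uniqueness of global multiple $\SLE_{\kappa}$ in the regime $\kappa\in(4,6]$ is itself a nontrivial ingredient (handled in~\cite{WuHyperSLE, BeffaraPeltolaWuUniqueness} rather than being covered by the Markov-chain coupling argument for simple curves); a complete proof must make these points explicit rather than citing a blanket $\kappa\in(0,8)$ uniqueness.
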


\begin{definition} \label{def::meander}
	A {meander} formed from two link patterns $\alpha,\beta\in\LP_N$ is the planar diagram obtained by placing $\alpha$ and the horizontal reflection $\beta$ on top of each other. An example of a meander is
	\begin{align*} 
		\alpha \quad = \quad \vcenter{\hbox{\includegraphics[scale=0.275]{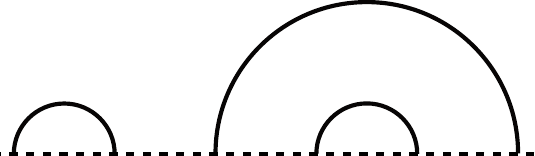}}} 
		\quad  , \quad  
		\beta \quad = \quad\vcenter{\hbox{\includegraphics[scale=0.275]{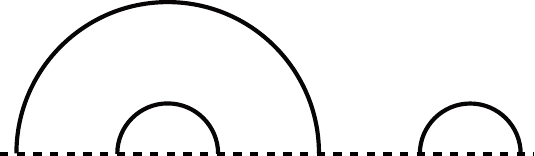}}} 
		\quad\quad\quad \Longrightarrow \quad\quad\quad
		\vcenter{\hbox{\includegraphics[scale=0.275]{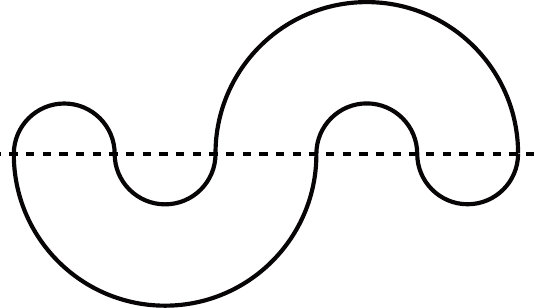}}} .
	\end{align*}
	We denote by $\LL_{\alpha,\beta}$ the number of loops in the meander formed from $\alpha$ and $\beta$. Fix $q\in (0,4)$. We define the {meander matrix} $\{\LM_{\alpha, \beta}(q) \colon \alpha,\beta\in\LP_N\}$ via 
	\begin{align*}
		\LM_{\alpha,\beta}(q) := \sqrt{q}^{\; \LL_{\alpha,\beta}}. 
	\end{align*}
\end{definition}

\begin{proposition}\textnormal{~\cite[Theorem~1.8]{FengPeltolaWuConnectionProbaFKIsing}} \label{prop::cvg_proba}
	Assume the same notations as in Proposition~\ref{prop::cvg_global_interfaces}.
	Recall that $\vartheta^{\delta}$ is the link pattern given by multiple interfaces $(\eta_1^{\delta},\ldots,\eta_{2N}^{\delta})$. Then we have 
	\begin{align*}
	\lim_{\delta\to 0}\mathbb{P}_{\Omega}^{\delta}[\vartheta^{\delta}=\alpha]=\LM_{\unnested,\alpha}(2)\frac{\mathcal{Z}_{\alpha}(\theta_1,\ldots,\theta_{2N})}{\mathcal{F}(\theta_1,\ldots,\theta_{2N})}, \quad \text{for all }\alpha\in \LP_N,
 	\end{align*}

	where the function $\LF$ is defined by
	\begin{align*}
		\mathcal{F}(\theta_1,\ldots,\theta_{2N})= \; & \prod_{s=1}^N \big(\sin((\theta_{2s}-\theta_{2s-1})/2)\big)^{-1/8}
		\bigg(\sum_{\boldsymbol{\mu} \in \{\pm 1\}^N}\prod_{1\le s < t \le N} \chi(\theta_{2s-1},\theta_{2t-1},\theta_{2t},\theta_{2s})^{ \mu_s \mu_t/4}\bigg)^{1/2},
	\end{align*}
	where 
	\begin{align*}
		\chi(\hat{\theta}_1,\hat{\theta}_2,\hat{\theta}_3,\hat{\theta}_4):= \left| \frac{\sin((\hat{\theta}_2-\hat{\theta}_1)/2)\sin((\hat{\theta}_4-\hat{\theta}_3)/2)}{\sin((\hat{\theta}_3-\hat{\theta}_1)/2)\sin((\hat{\theta}_4-\hat{\theta}_2)/2)}\right|.
	\end{align*}
\end{proposition}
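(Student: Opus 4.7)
The strategy combines three ingredients: tightness and the conditional scaling limit of multiple FK interfaces (Lemma~\ref{lem::FKIsing_tightness} and Proposition~\ref{prop::cvg_global_interfaces}), the Edwards-Sokal coupling relating FK-Ising to the spin Ising model, and Chelkak-Hongler-Izyurov's convergence of Ising boundary spin correlations. By Lemma~\ref{lem::FKIsing_tightness} and Proposition~\ref{prop::cvg_global_interfaces}, it suffices to identify $p_\alpha := \lim_{\delta \to 0} \PP^\delta_\Omega[\vartheta^\delta = \alpha]$ along any convergent subsequence, since the joint law of $(\vartheta^\delta, \eta_1^\delta, \ldots, \eta_{2N}^\delta)$ then converges to $\sum_\alpha p_\alpha \QQ_\alpha$ weighted by point masses on the link patterns.

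First, I would rewrite $\PP^\delta_\Omega[\vartheta^\delta = \alpha]$ in terms of Ising spin correlations. Applying the Edwards-Sokal coupling to the alternating wired condition~\eqref{eqn::bc}, each FK cluster carries an independent uniform $\pm 1$ spin, and summing the boundary-spin assignments compatible with the wirings gives the identity
\[\PP^\delta_\Omega[\vartheta^\delta = \alpha] \;=\; \LM_{\unnested,\alpha}(2)\,\frac{\widetilde{Z}_\alpha^\delta}{\sum_\beta \LM_{\unnested,\beta}(2)\,\widetilde{Z}_\beta^\delta},\]
where $\widetilde{Z}_\alpha^\delta$ denotes the critical FK-Ising partition function restricted to configurations with link pattern $\alpha$. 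The combinatorial prefactor $\LM_{\unnested,\alpha}(2) = \sqrt{2}^{\,\LL_{\unnested,\alpha}}$ counts the boundary colorings consistent with $\alpha$: each loop in the meander formed by $\unnested$ and $\alpha$ represents an independent $\pm 1$ choice and contributes $\sqrt{2}$.

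Next, using the discrete holomorphic fermionic spinor of Smirnov-Chelkak-Hongler-Izyurov and its convergence to the continuous holomorphic SLE martingale, I would show that
\[\delta^{-N/8}\,\widetilde{Z}_\alpha^\delta \;\longrightarrow\; C\,\LZ_\alpha(\Omega; x_1, \ldots, x_{2N}) \qquad (\delta \to 0)\]
for a universal constant $C>0$ independent of $\alpha$: transporting the observable along the radial Loewner chain driven by $\eta_1^\delta$ yields a martingale whose continuum limit satisfies the BPZ equations~\eqref{eqn::chordalBPZ} together with the recursive asymptotics of Section~\ref{subsec::pre_ppf}, so by uniqueness the limit must be a multiple of $\LZ_\alpha$. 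For the denominator, $\sum_\beta \LM_{\unnested,\beta}(2)\,\widetilde{Z}_\beta^\delta$ equals, up to the same constant $C$, the critical Ising boundary $2N$-point spin correlator with $\pm$ boundary conditions alternating at the marked points; by Chelkak-Hongler-Izyurov this correlator, renormalized by $\delta^{-N/8}$, converges to $C\,\LF(\theta_1, \ldots, \theta_{2N})$. The explicit form of $\LF$ comes from the continuum Pfaffian representation of boundary Ising correlators: the prefactor $\prod_s \sin^{-1/8}$ records conformal covariance at weight $h = 1/16$ attached to pairs in $\unnested$, and the sum over $\bs{\mu} \in \{\pm 1\}^N$ is the standard expansion of a Pfaffian squared.

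The main obstacle is the careful coordination of normalization constants so that the single universal $C$ cancels in the ratio and leaves precisely $\LM_{\unnested,\alpha}(2)\,\LZ_\alpha/\LF$. Equivalently, one must verify the continuum crossing identity $\LF = \sum_\beta \LM_{\unnested,\beta}(2)\,\LZ_\beta$ on the nose. This may be checked either combinatorially by expanding the Pfaffian-squared formula for $\LF$ and matching it against the recursive asymptotics of the pure partition functions $\LZ_\beta$, or probabilistically by identifying the weighted sum on the right-hand side with the law of a single chordal $\SLE_{16/3}$ weighted by the scaling limit of the boundary spinor observable, which is the scaling limit of $\eta_1^\delta$ without conditioning on the link pattern.
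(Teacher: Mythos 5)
This proposition is not proved in the paper: it is imported verbatim as Theorem~1.8 of \cite{FengPeltolaWuConnectionProbaFKIsing}, and the manuscript uses it as a black box. So there is no in-paper proof to compare against.

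As a reconstruction of the argument of \cite{FengPeltolaWuConnectionProbaFKIsing}, your sketch has the right skeleton (loop/meander factor $\LM_{\unnested,\alpha}(2)$ from the number of boundary clusters, CHI boundary spin correlations for the denominator $\LF$, and the identity $\LF=\sum_\beta\LM_{\unnested,\beta}(2)\LZ_\beta$), but the step you present as routine, namely that $\delta^{-N/8}\widetilde{Z}^\delta_\alpha\to C\,\LZ_\alpha$ for each fixed $\alpha$ via a discrete holomorphic observable carried along the radial chain, is in fact the entire difficulty and does not go through as written. There is no known discrete fermionic observable adapted to a single link pattern $\alpha$: the Smirnov--Chelkak--Hongler--Izyurov observables and correlators are attached to fixed boundary conditions and naturally converge to the \emph{total} object $\LF$ (equivalently to the symmetric partition function), not to the individual pure partition functions $\LZ_\alpha$. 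Consequently one cannot invoke BPZ $+$ recursive asymptotics $+$ uniqueness directly for each $\alpha$, because one first has to know that the discrete ratio $\PP^\delta_\Omega[\vartheta^\delta=\alpha]$ actually has a limit and that this limit is smooth enough in the marked points, which is precisely what must be shown. The actual proof avoids this by an indirect route: it first obtains the scaling limit of the \emph{unconditioned} interface collection (via Proposition~\ref{prop::cvg_global_interfaces} and the convergence of a single observable), writes the limiting total partition function as a positive linear combination $\sum_\beta c_\beta \LZ_\beta$, and then pins down the coefficients $c_\beta=\LM_{\unnested,\beta}(2)$ by a cascade/fusion analysis near collisions of marked points (where discrete and continuum asymptotics can both be computed and matched), crucially using the uniqueness and asymptotics properties of the pure partition functions from Section~\ref{subsec::pre_ppf}. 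You would need to replace your direct convergence claim with such a cascade argument to close the gap.
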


We now consider the crtitical FK-Ising model in a polygon $(\Omega^{\delta};x_1^{\delta},\ldots,x_{2N}^{\delta})$ with the alternating boundary condition~\eqref{eqn::bc}, and let $\eta_j^{\delta}$ be the interface starting from $x_j^{\delta,\diamond}$, $1\leq j\leq 2N$. Let $z^{\delta}\in \Omega^{\delta}$ and write $\bs{\eta}^{\delta}=\cup_{j=1}^{2N}\eta_j^{\delta}$. Note that the event $\{z^{\delta}\leftrightarrow \cup_{j=1}^N (x_{2j-1}^{\delta} x_{2j}^{\delta})\}$  would impose some topological restrictions on the locations of $\bs{\eta}^{\delta}$ and $z^{\delta}$. We now elaborate on these restrictions. 
\begin{definition} \label{def::wired_event}
	Let $\Omega_{\bs{\eta}^{\delta}}^{\delta,\diamond}(z^{\delta})$ be the connected component of  $\Omega^{\delta,\diamond}\setminus \bs{\eta}^{\delta}$ containing $z^{\delta}$. Let $\Omega_{\bs{\eta}^{\delta}}^{\delta}(z^{\delta})$ be the connected component of $\Omega^{\delta}\setminus \bs{\eta}^{\delta}$ containing $z^{\delta}$. We give each $\eta_j^{\delta}$ an orientation such that it always has open edges on its right and  dual-open edges on its left. We also give $\cup_{j=1}^N (x_{2j-1}^{\delta,\diamond}x_{2j}^{\delta,\diamond})$ (resp., $\cup_{j=1}^N(x_{2j}^{\delta,\diamond}x_{2j+1}^{\delta,\diamond})$) an orientation such that it has $\Omega^{\delta,\diamond}$ on its right (resp., left). We then denote by $\mathcal{W}(\bs{\eta}^\delta;z^{\delta})$ the event that the boundary of $\Omega_{\bs{\eta}^{\delta}}^{\delta,\diamond}(z^{\delta})$ is oriented clockwise. 
	%The analogue event $\mathcal{W}(\bs{\eta};z)$ in the scaling limit is defined similarly. 
\end{definition}
Using the domain Markov property of the FK-Ising model, we have 
\begin{align} \label{eqn::discrete_one_arm_decom}
	\mathbb{P}_{\Omega}^{\delta}\left[z^{\delta}\leftrightarrow \cup_{j=1}^N(x_{2j-1}^{\delta}x_{2j}^{\delta})\right]=\mathbb{E}_{\Omega}^{\delta}\left[\one \left\{\LW(\bs{\eta}^{\delta};z^{\delta})  \right\} \mathbb{P}_{\Omega_{\bs{\eta}^{\delta}}^{\delta}(z^{\delta}),\mathrm{w}}^{\delta}\left[z^{\delta}\leftrightarrow\partial \Omega_{\bs{\eta}^{\delta}}^{\delta}(z^{\delta})\right]\right].
\end{align}

\begin{corollary} \label{coro::limit_one_point}
Assume the same notations as in Conjecture~\ref{conj::cvg_interfaces} and fix $q=2$.  Then we have 
\begin{align}\label{eqn::coro::limit_one_point}
	\lim_{\delta\to 0}\frac{\mathbb{P}_{\Omega}^{\delta}\left[z^{\delta}\leftrightarrow \cup_{j=1}^N(x_{2j-1}^{\delta}x_{2j}^{\delta})\right]}{\mathbb{P}^{\delta}_{\mathbb{U},\mathrm{w}}[0\leftrightarrow \partial \mathbb{U}^{\delta}]}=\sum_{\alpha\in \LP_N} \LM_{\unnested,\alpha}(2)\frac{\mathcal{Z}_{\alpha}(\theta_1,\ldots,\theta_{2N})}{\mathcal{F}(\theta_1,\ldots,\theta_{2N})}\mathbb{E}_{\alpha}\left[\one \left\{\LW(\bs{\eta};z)\right\}\CR(\Omega\setminus \bs{\eta};z)^{-1/8} \right],
\end{align}
where $\mathbb{E}_{\alpha}$ is the expectation with respect to the law of global $N$-$\mathrm{SLE}_{16/3}$ associated to $\alpha$ in the polygon $(\Omega;x_1,\ldots,x_{2N})$, and the event $\mathcal{W}(\bs{\eta};z)$ is defined in Definition~\ref{def::Wetaz}.
\end{corollary}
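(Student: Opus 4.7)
The plan is to start from the discrete identity~\eqref{eqn::discrete_one_arm_decom} and decompose according to the link pattern $\vartheta^{\delta}$. Writing
\[R^{\delta} := \frac{\mathbb{P}^{\delta}_{\Omega_{\bs{\eta}^{\delta}}^{\delta}(z^{\delta}),\mathrm{w}}[z^{\delta}\leftrightarrow \partial \Omega_{\bs{\eta}^{\delta}}^{\delta}(z^{\delta})]}{\mathbb{P}^{\delta}_{\U,\mathrm{w}}[0 \leftrightarrow \partial \U^{\delta}]},\]
dividing~\eqref{eqn::discrete_one_arm_decom} by $\mathbb{P}^{\delta}_{\U,\mathrm{w}}[0 \leftrightarrow \partial \U^{\delta}]$ yields
\[\frac{\mathbb{P}^{\delta}_{\Omega}[z^{\delta}\leftrightarrow \cup_{j=1}^N (x_{2j-1}^{\delta} x_{2j}^{\delta})]}{\mathbb{P}^{\delta}_{\U,\mathrm{w}}[0\leftrightarrow \partial \U^{\delta}]} = \sum_{\alpha \in \LP_N} \mathbb{P}^{\delta}_{\Omega}[\vartheta^{\delta} = \alpha] \; \mathbb{E}^{\delta}_{\Omega}\left[\one\{\LW(\bs{\eta}^{\delta}; z^{\delta})\}\, R^{\delta} \,\big|\, \vartheta^{\delta} = \alpha\right].\]
The strategy is to pass to the limit $\delta\to 0$ inside the sum and inside each conditional expectation.

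For each $\alpha\in\LP_N$, Proposition~\ref{prop::cvg_proba} handles the prefactor, while Proposition~\ref{prop::cvg_global_interfaces} gives that the conditional law of $\bs{\eta}^{\delta}$ given $\vartheta^{\delta}=\alpha$ converges weakly to $\QQ_{\alpha}$. Using Skorokhod's representation, I would work under a coupling in which $\bs{\eta}^{\delta}\to\bs{\eta}$ almost surely in the curve metric~\eqref{eq::curve_metric}. By Lemma~\ref{lem::FKIsing_tightness} the limit $\bs{\eta}$ is a collection of non-self-crossing curves avoiding $z$ almost surely, so $\Omega^{\delta}_{\bs{\eta}^{\delta}}(z^{\delta})\to\Omega_{\bs{\eta}}(z)=\Omega\setminus\bs{\eta}$ in the Carath\'eodory sense. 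Lemma~\ref{lem::limit_one_point} then yields $R^{\delta}\to\CR(\Omega\setminus\bs{\eta};z)^{-1/8}$ almost surely. For the indicator, the orientation condition in the discrete event of Definition~\ref{def::wired_event} translates precisely to the continuum sidedness condition of Definition~\ref{def::Wetaz}, and the boundary case ($z\in\bs{\eta}$) has $\QQ_{\alpha}$-probability zero, so $\one\{\LW(\bs{\eta}^{\delta};z^{\delta})\}\to\one\{\LW(\bs{\eta};z)\}$ almost surely.

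To justify exchanging limit and expectation, I would establish a uniform domination
\[R^{\delta}\le C\,\CR\bigl(\Omega_{\bs{\eta}^{\delta}}^{\delta}(z^{\delta});z^{\delta}\bigr)^{-1/8}\]
valid for all small $\delta$. Combined with the Carath\'eodory convergence of the random domains and Koebe's quarter theorem, this dominating function converges to the integrable bound $C\,\CR(\Omega\setminus\bs{\eta};z)^{-1/8}$, whose integrability under $\QQ_{\alpha}$ follows from Proposition~\ref{prop::solutions_evenr2}: for $\kappa=16/3$ one has $\mathfrak{r}_1=1/8<1/3=1-\kappa/8$. Dominated convergence then delivers~\eqref{eqn::coro::limit_one_point} after summing over the finitely many link patterns $\alpha\in\LP_N$.

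The main obstacle is the uniform upper bound on $R^{\delta}$ with the sharp exponent $1/8$, uniformly over the random family of subdomains $\Omega_{\bs{\eta}^{\delta}}^{\delta}(z^{\delta})$. The cleanest route is to invoke the sharp asymptotic~\eqref{eqn::asy_one_arm} from \cite{ChelkakHonglerIzyurovConformalInvarianceCorrelationIsing}, which is specific to FK-Ising. Failing that, one could split on $\{\CR(\Omega_{\bs{\eta}^{\delta}}^{\delta}(z^{\delta});z^{\delta})\ge\varepsilon\}$, apply Lemma~\ref{lem::limit_one_point} on a reference covering of moderate-size domains to control the numerator in $R^{\delta}$, and bound the complementary small-$\CR$ event via RSW together with a Beurling-type half-plane estimate. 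Either path converts the discrete computation into the continuum expression in~\eqref{eqn::coro::limit_one_point}, completing the proof.
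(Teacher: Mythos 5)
Your proposal follows the same overall strategy as the paper: decompose~\eqref{eqn::discrete_one_arm_decom} by link pattern, use Propositions~\ref{prop::cvg_proba} and~\ref{prop::cvg_global_interfaces} plus a Skorokhod coupling to get a.s.\ curve convergence and hence Carath\'eodory convergence of $\Omega^{\delta}_{\bs{\eta}^{\delta}}(z^{\delta})$, then apply Lemma~\ref{lem::limit_one_point} pointwise and justify exchange of limit and expectation. Of the two routes you sketch for the dominated-convergence step, the second (split on whether $\bs{\eta}^{\delta}$ gets within distance $\eps$ of $z^{\delta}$, then bound the bad event by RSW) is exactly the paper's; the first (quoting the sharp CHI asymptotic~\eqref{eqn::asy_one_arm}) works for $q=2$ but the paper deliberately avoids it so that the argument can potentially generalize to other $q$.

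One simplification you should note: you frame the obstacle as needing the sharp uniform domination $R^{\delta}\le C\,\CR(\Omega^{\delta}_{\bs{\eta}^{\delta}}(z^{\delta});z^{\delta})^{-1/8}$, but this is stronger than what is required. Once you restrict to the good event $\{\bs{\eta}^{\delta}\cap B_{\eps}(z^{\delta})=\emptyset\}$, FKG and RSW already give $R^{\delta}\lesssim_{\eps} 1$, which with Lemma~\ref{lem::limit_one_point} and dominated convergence yields the limit on that event. On the complementary event, the paper bounds the contribution by $\lesssim\eps^{c_1}$ using that $\bs{\eta}^{\delta}$ entering $B_{\eps}(z^{\delta})$ forces both an open and a dual-open arm across $A_{\eps,1}(z^{\delta})$; this polychromatic two-arm estimate (via FKG, the strong RSW Lemma~\ref{lem::RSW}, and quasi-multiplicativity as in \cite{DCMTRCMFractalProperties}) is the precise mechanism, rather than a Beurling half-plane estimate. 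Sending $\eps\to0$ then finishes the proof without ever needing a sharp $\CR^{-1/8}$-type uniform bound.
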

\begin{proof}
	It follows from the observation~\eqref{eqn::discrete_one_arm_decom} that 
	\begin{align*}
	\mathbb{P}_{\Omega}^{\delta}\left[z^{\delta}\leftrightarrow \cup_{j=1}^N(x_{2j-1}^{\delta}x_{2j}^{\delta})\right]=\sum_{\alpha\in\LP_N}\mathbb{E}_{\alpha}^{\delta}\left[\one \left\{\LW(\bs{\eta}^{\delta};z^{\delta})  \right\} \mathbb{P}_{\Omega_{\bs{\eta}^{\delta}}^{\delta}(z^{\delta}),\mathrm{w}}^{\delta}\left[z^{\delta}\leftrightarrow\partial \Omega_{\bs{\eta}^{\delta}}^{\delta}(z^{\delta})\right]\right]\times \mathbb{P}_{\Omega}^{\delta}\left[\vartheta^{\delta}=\alpha\right],
	\end{align*}
	where $\mathbb{E}_{\alpha}^{\delta}$ is the expectation with respect to the law of $\mathbb{P}_{\Omega}^{\delta}$ conditional on the event $\{\vartheta^{\delta}=\alpha\}$.
Thanks to Proposition~\ref{prop::cvg_proba}, it suffices to show that 
\begin{align} \label{eqn::coro::limit_one_point_aux1}
	\lim_{\delta\to 0}\frac{\mathbb{E}_{\alpha}^{\delta}\left[\one \left\{\LW(\bs{\eta}^{\delta};z^{\delta})  \right\} \mathbb{P}_{\Omega_{\bs{\eta}^{\delta}}^{\delta}(z^{\delta}),\mathrm{w}}^{\delta}\left[z^{\delta}\leftrightarrow\partial \Omega_{\bs{\eta}^{\delta}}^{\delta}(z^{\delta})\right]\right]}{\mathbb{P}_{\mathbb{U},\mathrm{w}}^{\delta}\left[0\leftrightarrow \partial \mathbb{U}^{\delta}\right]}= \mathbb{E}_{\alpha}\left[\one \left\{\LW(\bs{\eta};z)\right\}\CR(\Omega\setminus \bs{\eta};z)^{-1/8} \right],\enspace \text{for all }\alpha\in\LP_N.
\end{align}

Fix $\alpha\in \LP_N$ and write $\alpha=\{\{a_1,b_1\},\ldots, \{a_N,b_N\}\}$. Conditional on the event $\{\vartheta^{\delta}=\alpha\}$, for $1\leq j\leq N$, let $\eta^{(j,\delta)}$ be the curve in $(\eta_1^{\delta},\ldots,\eta_{2N}^{\delta})$ having $x_{a_j^{\delta}}^{\delta,\diamond}$ and $x_{b_j^{\delta}}^{\delta,\diamond}$ as endpoints. According to Proposition~\ref{prop::cvg_global_interfaces} (also by coupling them into the same probability space), we may assume that $(\eta^{(1,\delta)},\ldots,\eta^{(N,\delta)})$ converges almost surely as $\delta\to0$ to $(\eta^{(1)},\ldots,\eta^{(N)})\sim \mathbb{P}_{\alpha}$ under the metric~\eqref{eqn::metric_multiple}. Then the discrete domains $\Omega_{\bs{\eta}^{\delta}}^{\delta}(z^{\delta})$ converge almost surely as $\delta\to 0$ to $\Omega_{\bs{\eta}}(z)$ in the Carath\'edory sense.
Let $0<\epsilon\ll 1$. On the one hand, a standard application of the FKG inequality and RSW estimates implies that 
\[\frac{\one \left\{\LW(\bs{\eta}^{\delta};z^{\delta}),\; \bs{\eta}^{\delta}\cap B_{\epsilon}(z^{\delta})=\emptyset  \right\} \mathbb{P}_{\Omega_{\bs{\eta}^{\delta}}^{\delta}(z^{\delta}),\mathrm{w}}^{\delta}\left[z^{\delta}\leftrightarrow\partial \Omega_{\bs{\eta}^{\delta}}^{\delta}(z^{\delta})\right]}{\mathbb{P}_{\mathbb{U},\mathrm{w}}^{\delta}\left[0\leftrightarrow \partial \mathbb{U}^{\delta}\right]}\lesssim 1,\quad \text{as }\delta\to 0.\]
It then follows from Lemma~\ref{lem::limit_one_point} and the dominated convergence theorem that 
\begin{align}\label{eqn::coro::limit_one_point_aux2}
\begin{split}
			&\lim_{\delta\to 0}\frac{\mathbb{E}_{\alpha}^{\delta}\left[\one \left\{\LW(\bs{\eta}^{\delta};z^{\delta}),\; \bs{\eta}^{\delta}\cap B_{\epsilon}(z^{\delta})=\emptyset  \right\} \mathbb{P}_{\Omega_{\bs{\eta}^{\delta}}^{\delta}(z^{\delta}),\mathrm{w}}^{\delta}\left[z^{\delta}\leftrightarrow\partial \Omega_{\bs{\eta}^{\delta}}^{\delta}(z^{\delta})\right]\right]}{\mathbb{P}_{\mathbb{U},\mathrm{w}}^{\delta}\left[0\leftrightarrow \partial \mathbb{U}^{\delta}\right]}\\
	&	\qquad= \mathbb{E}_{\alpha}\left[\one \left\{\LW(\bs{\eta};z),\; \bs{\eta}\cap B_{\epsilon}(z)=\emptyset\right\}\CR(\Omega\setminus \bs{\eta};z)^{-1/8} \right].
\end{split}
\end{align}
On the other hand, note that on the event $\{\bs{\eta}^{\delta}\cap B_{\epsilon}(z^{\delta})\neq \emptyset\}$, there exist one open path and one dual-open path connecting $\partial \Omega^{\delta,\diamond}$ to $\partial B_{\epsilon}(z^{\delta})$. Then a standard application of the FKG inequality and strong RSW estimates in Lemma~\ref{lem::RSW} (see e.g.,~\cite[Proof of Corollary~6.7]{DCMTRCMFractalProperties}) implies that there exists a constant $c_1$ which is independent of $\delta$ and $\epsilon$ such that
\begin{align}\label{eqn::coro::limit_one_point_aux3}                                                                                                                                                                                                                                                                                                                                                                                    
	\frac{\mathbb{E}_{\alpha}^{\delta}\left[\one \left\{\LW(\bs{\eta}^{\delta};z^{\delta}),\; \bs{\eta}^{\delta}\cap B_{\epsilon}(z^{\delta})\neq\emptyset  \right\} \mathbb{P}_{\Omega_{\bs{\eta}^{\delta}}^{\delta}(z^{\delta}),\mathrm{w}}^{\delta}\left[z^{\delta}\leftrightarrow\partial \Omega_{\bs{\eta}^{\delta}}^{\delta}(z^{\delta})\right]\right]}{\mathbb{P}_{\mathbb{U},\mathrm{w}}^{\delta}\left[0\leftrightarrow \partial \mathbb{U}^{\delta}\right]}\lesssim \epsilon^{c_1}.
\end{align}                                                                                                                                                                                                                                                                                                                                                                                                                                                                            

Combining~\eqref{eqn::coro::limit_one_point_aux2} with~\eqref{eqn::coro::limit_one_point_aux3} and letting $\epsilon\to 0$ give~\eqref{eqn::coro::limit_one_point_aux1}, as desired.
\end{proof}

\subsection{Proof of Theorem~\ref{thm::FKIsing_cvg}}
\label{subsec::FKIsing_proof}

\begin{definition} \label{def::partition_random_cluster}
Fix $\kappa\in (4,6]$.	The cluster-weight $q$ and parameter $\kappa$ are related through~\eqref{eqn::qkappa}. We define 
	\begin{align} \label{eqn::def_partition_random_cluster}
		\LG^{(\mathfrak{r})}(\theta_1,\ldots,\theta_{2N})=\sum_{\alpha\in\LP_N} \LM_{\unnested,\alpha}(q) \LZalpharwired(\theta_1,\ldots,\theta_{2N}),\quad \text{for }(\theta_1,\ldots,\theta_{2N})\in\chamber_{2N},
	\end{align}
	where $\LZalpharwired$ is defined in Definition~\ref{def::Wetaz}. 
\end{definition}
From 
Lemma~\ref{lem::LZalpharwired}, the function $\LG^{(\mathfrak{r})}$ defined in~\eqref{eqn::def_partition_random_cluster} satisfies the system of radial BPZ equations~\eqref{eqn::radialBPZ} with
$\aleph=\frac{(6-\kappa)(\kappa-2)}{8\kappa}-\mathfrak{r}>-\frac{3}{2\kappa}$.

\begin{proof}[Proof of Theorem~\ref{thm::FKIsing_cvg}]
Fix $\kappa=16/3$. Without loss of generality, we may assume $\Omega=\mathbb{U}$, $z=0$ and $\varphi$ is the identity map. 
	
	By Lemma~\ref{lem::FKIsing_tightness}, we may choose a subsequence $\delta_n\to 0$ such that $(\eta_1^{\delta_n},\ldots,\eta_{2N}^{\delta_n})$ converges weakly in the metric~\eqref{eqn::metric_multiple} as $n\to\infty$. We denote the limit by $(\eta_1,\ldots,\eta_{2N})$. 
	Let $\vartheta=\{\{a_1,b_1\},\ldots,\{a_N,b_N\}\}\in \LP_N$ be the link pattern given by $(\eta_1,\ldots,\eta_{2N})$. For $1\leq j\leq N$, let $\eta^{(j)}$ be the curve in $(\eta_1,\ldots,\eta_{2N})$ having $x_{a_j}$ and $x_{b_j}$ as endpoints. We denote by $\mathbb{P}$ the law of $(\eta^{(1)},\ldots,\eta^{(N)})$ and by $\mathbb{E}$ the corresponding expectation.

Let $F$ be any bounded continuous function on the space $(X_N, \dist)$. We claim that 
\begin{align} \label{eqn::FKIsing_cvg_aux1}
	\mathbb{E}\left[F(\eta^{(1)},\ldots,\eta^{(N)})\right]= \sum_{\alpha\in \LP_N} \LM_{\unnested,\alpha}(2) \LZ_{\alpha}(\theta_1,\ldots,\theta_{2N}) \frac{\mathbb{E}_{\alpha}\left[\one\left\{\mathcal{W}(\bs{\eta};0)\right\}\CR(\mathbb{U}\setminus \bs{\eta};0)^{-1/8}F(\eta^{(1)},\ldots,\eta^{(N)})\right]}{\LG^{(1/8)}(\theta_1,\ldots,\theta_{2N})},
\end{align}
where the function $\LG^{(1/8)}$ is defined in Definition~\ref{def::partition_random_cluster}. 
Combining the claim~\eqref{eqn::FKIsing_cvg_aux1} with Lemma~\ref{lem::LZalpharwired}, we conclude that the law of $\eta^{(1)}$ is the same as radial $\SLE_{\kappa}$ in $(\U; \exp(\ii\theta_1); 0)$ weighted by the following local martingale, up to the first time $\exp(\ii\theta_{2})$ or $\exp(\ii\theta_{2N})$ is disconnected from the origin:
\begin{equation*}
	M_t(\LG^{(1/8)})=g_t'(0)^{1/8-\tilde{h}}\times\prod_{j=2}^{2N}\phi_t'(\theta_j)^h\times\LG^{(1/8)}(\xi_t, \phi_t(\theta_2), \ldots, \phi_t(\theta_{2N})).
\end{equation*}                                                                                             
This gives~\eqref{eqn::driving_function_FK_conditional} for $j=1$. For $j\in \{2,\ldots,2N\}$, the proof is essentially the same.

We now prove the claim~\eqref{eqn::FKIsing_cvg_aux1}. Let $\vartheta^{\delta}=\{\{a_1^{\delta},b_1^{\delta}\},\ldots,\{a_N^{\delta},b_N^{\delta}\}\}\in \LP_N$ be the link pattern given by $(\eta_1^{\delta},\ldots,\eta_{2N}^{\delta})$. For $1\leq j\leq N$, let $\eta^{(j,\delta)}$ be the curve in $(\eta_1^{\delta},\ldots,\eta_{2N}^{\delta})$ having $x_{a_j^{\delta}}^{\delta,\diamond}$ and $x_{b_j^{\delta}}^{\delta,\diamond}$ as endpoints. We denote by $\tilde{\mathbb{E}}^{\delta}$ the expectation with respect to the law of $(\eta^{(1,\delta)},\ldots,\eta^{(N,\delta)})$. Note that the law of $(\eta^{(1,\delta_n)},\ldots,\eta^{(N,\delta_n)})$ converges weakly to $\mathbb{P}$ in the metric~\eqref{eqn::metric_multiple} as $n\to \infty$, which implies 
\begin{align}\label{eqn::FKIsing_cvg_aux2}                                               
	\lim_{n\to \infty}\tilde{\mathbb{E}}^{\delta_n} \left[F(\eta^{(1,\delta_n)},\ldots,\eta^{(N,\delta_n)})\right]=\mathbb{E}\left[F(\eta^{(1)},\ldots,\eta^{(N)})\right].  
\end{align}
It follows from the domain Markv property of the FK-Ising model that 
\begin{align}\label{eqn::FKIsing_cvg_aux3}
\begin{split}
		&\tilde{\mathbb{E}}^{\delta} \left[F(\eta^{(1,\delta)},\ldots,\eta^{(N,\delta)})\right]\\
		&\qquad= \frac{\mathbb{E}_{\Omega}^{\delta}\left[\one \left\{\LW(\bs{\eta}^{\delta};z^{\delta})  \right\} \mathbb{P}_{\Omega_{\bs{\eta}^{\delta}}^{\delta}(z^{\delta}),\mathrm{w}}^{\delta}\left[z^{\delta}\leftrightarrow\partial \Omega_{\bs{\eta}^{\delta}}^{\delta}(z^{\delta})\right]F(\eta^{(1,\delta)},\ldots,\eta^{(N,\delta)})\right]}{\mathbb{P}_{\Omega}^{\delta}\left[z^{\delta}\leftrightarrow \cup_{j=1}^N(x_{2j-1}^{\delta}x_{2j}^{\delta})\right]}\\
		&\qquad=\frac{\mathbb{E}_{\Omega}^{\delta}\left[\one \left\{\LW(\bs{\eta}^{\delta};z^{\delta})  \right\} \mathbb{P}_{\Omega_{\bs{\eta}^{\delta}}^{\delta}(z^{\delta}),\mathrm{w}}^{\delta}\left[z^{\delta}\leftrightarrow\partial \Omega_{\bs{\eta}^{\delta}}^{\delta}(z^{\delta})\right]F(\eta^{(1,\delta)},\ldots,\eta^{(N,\delta)})\right]/\mathbb{P}^{\delta}_{\mathbb{U},\mathrm{w}}[0\leftrightarrow \partial \mathbb{U}^{\delta}]}{\mathbb{P}_{\Omega}^{\delta}\left[z^{\delta}\leftrightarrow \cup_{j=1}^N(x_{2j-1}^{\delta}x_{2j}^{\delta})\right]/\mathbb{P}^{\delta}_{\mathbb{U},\mathrm{w}}[0\leftrightarrow \partial \mathbb{U}^{\delta}]}.
\end{split}
\end{align}
The denominator was treated in Corollary~\ref{coro::limit_one_point}. For the numerator, one can proceed as in the proof of Corollary~\ref{coro::limit_one_point} to show that 
\begin{align}\label{eqn::FKIsing_cvg_aux4}
\begin{split}
&	\lim_{\delta\to 0}	\mathbb{E}_{\Omega}^{\delta}\left[\one \left\{\LW(\bs{\eta}^{\delta};z^{\delta})  \right\} \mathbb{P}_{\Omega_{\bs{\eta}^{\delta}}^{\delta}(z^{\delta}),\mathrm{w}}^{\delta}\left[z^{\delta}\leftrightarrow\partial \Omega_{\bs{\eta}^{\delta}}^{\delta}(z^{\delta})\right]F(\eta^{(1,\delta)},\ldots,\eta^{(N,\delta)})\right]/\mathbb{P}^{\delta}_{\mathbb{U},\mathrm{w}}[0\leftrightarrow \partial \mathbb{U}^{\delta}]\\
	&\qquad=  \sum_{\alpha\in \LP_N} \LM_{\unnested,\alpha}(2)\frac{\mathcal{Z}_{\alpha}(\theta_1,\ldots,\theta_{2N})}{\mathcal{F}(\theta_1,\ldots,\theta_{2N})}\mathbb{E}_{\alpha}\left[\one \left\{\LW(\bs{\eta};z)\right\}\CR(\Omega\setminus \bs{\eta};z)^{-1/8}F(\eta^{(1)},\ldots,\eta^{(N)}) \right].
\end{split}
\end{align}
Plugging~\eqref{eqn::coro::limit_one_point},~\eqref{eqn::FKIsing_cvg_aux3} and~\eqref{eqn::FKIsing_cvg_aux4} into~\eqref{eqn::FKIsing_cvg_aux2} gives the claim~\eqref{eqn::FKIsing_cvg_aux1}, as we set out to prove. 
\end{proof}

\appendix
\section{Proof of Lemma~\ref{lem::limit_one_point}}
\label{app::technical}

The goal of this appendix is to prove Lemma~\ref{lem::limit_one_point}. We first prove a coupling result in Lemma~\ref{lem::inward_coupling} and collect a result concerning the convergence of FK-Ising interface loops to $\mathrm{CLE}_{16/3}$ in Proposition~\ref{prop::CLE}. Then we complete the proof of Lemma~\ref{lem::limit_one_point}.

\begin{lemma} \label{lem::inward_coupling}
	Suppose that $z^{\delta}\in \delta\mathbb{Z}^2$, $\Omega^{\delta},\hat{\Omega}^{\delta}\subseteq \delta\mathbb{Z}^2$ satisfy $B_{10\epsilon}(z^{\delta})\subseteq \Omega^{\delta}\cap \hat{\Omega}^{\delta}$ for some $\epsilon>0$.  Fix $q\in [1,4)$ and let $a\in (\delta,\epsilon)$.	Then there exists a  constant $c_2\in (0,\infty)$ depending only on $q\in [1,4)$ such that the following holds. There exists a coupling $\PRCM^{\delta}$, between $\tilde{\Lambda}^{\delta}\sim \PRCM_{p_c(q),q,\Omega^{\delta}}^{\pi_1}[\cdot \cond \partial \Omega^{\delta}\leftrightarrow \partial B_{a}(z^{\delta})]$ and $\hat{\Lambda}^{\delta}\sim \PRCM_{p_c(q),q,\hat{\Omega}^{\delta}}^{\pi_2}[\cdot \cond \partial \hat{\Omega}^{\delta}\leftrightarrow \partial B_{a}(z^{\delta})]$, and an event $\LS$, such that, 
	\begin{itemize}
		\item first, $\LS$ is the event that there exists a common open circuit surrounding $z^{\delta}$ inside $A_{a,\epsilon}(z^{\delta})$ in both $\tilde{\Lambda}^{\delta}$ and $\hat{\Lambda}^{\delta}$, and we denote by $\gamma^{\delta}$ the outermost such open circuit;
		\item  second, 
		\begin{align*}
			\PRCM^{\delta}[\LS^c]\lesssim&\left(\frac{a}{\epsilon}\right)^{c_2};
		\end{align*}
		\item and third, if $\LS$ happens, then the status of edges inside of the region surrounded by $\gamma^{\delta}$ are the same under both configurations $\tilde{\Lambda}^{\delta}$ and $\hat{\Lambda}^{\delta}$.
	\end{itemize}
	As a consequence, we have
	\begin{equation} \label{eqn::asy_identi_arm_event}
		\left\vert\frac{\PRCM_{p_c(q),q,\Omega^{\delta}}^{\pi_1}[z^{\delta}\leftrightarrow \partial \Omega^{\delta} \cond \partial \Omega^{\delta}\leftrightarrow \partial B_{a}(z^{\delta})]}{\PRCM_{p_c(q),q,\hat{\Omega}^{\delta}}^{\pi_2}[z^{\delta}\leftrightarrow \partial \hat{\Omega}^{\delta} \cond \partial \hat{\Omega}^{\delta}\leftrightarrow \partial B_{a}(z^{\delta})]}  -1\right\vert \lesssim  \left(\frac{a}{\epsilon}\right)^{c_2} .
	\end{equation}
\end{lemma}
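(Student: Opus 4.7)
The plan is to exploit monotonicity of the random-cluster model (valid for $q\in[1,4)$) together with the RSW bound in Lemma \ref{lem::RSW} to construct a coupling in which, with high probability, both configurations share an open circuit in $A_{a,\epsilon}(z^\delta)$ around $z^\delta$; given such a common circuit, the domain Markov property forces the inside configurations to have the same wired-boundary law, so they can be coupled identically.

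First, introduce an auxiliary random-cluster configuration $\omega_f$ on $B_\epsilon(z^\delta)$ sampled from the free-boundary measure $\PRCM_{p_c(q),q,B_\epsilon(z^\delta)}^{\mathrm{f}}$, and via the standard monotone (Grimmett--Strassen) coupling for random-cluster measures with $q\ge 1$, construct a joint law $\PRCM^\delta$ of $(\omega_f,\tilde\Lambda^\delta,\hat\Lambda^\delta)$ with the property that every edge open in $\omega_f$ is also open in both $\tilde\Lambda^\delta$ and $\hat\Lambda^\delta$. This relies on (i) monotonicity in boundary conditions for $q\ge 1$, which implies that the effective boundary condition induced on $\partial B_\epsilon(z^\delta)$ by the exterior of $\tilde\Lambda^\delta$ or $\hat\Lambda^\delta$ stochastically dominates the free boundary condition, and (ii) that conditioning on the increasing arm event preserves stochastic dominance. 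Consequently, any open circuit of $\omega_f$ inside $A_{a,\epsilon}(z^\delta)$ is automatically a common open circuit of $\tilde\Lambda^\delta$ and $\hat\Lambda^\delta$, so $\LS^c\subseteq\{\omega_f\text{ has no open circuit in }A_{a,\epsilon}(z^\delta)\}$.

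Next, bound $\PRCM^\delta[\LS^c]$ by a dyadic RSW argument applied to $\omega_f$. Consider annuli $A_k := A_{2^k a,\,2^{k+1}a}(z^\delta)$ for $k=0,\ldots,K$ with $K = \lfloor \log_2(\epsilon/a)\rfloor - C$. By Lemma \ref{lem::RSW} applied within each annulus, and using monotonicity in boundary conditions exploited via the domain Markov property from outer to inner, under $\omega_f$ each $A_k$ contains an open circuit around $z^\delta$ with probability at least $c(q)>0$, uniformly in the exterior configuration. Iterating therefore yields
\begin{align*}
\PRCM^\delta[\LS^c]\le\PRCM^\delta[\omega_f\text{ has no open circuit in any }A_k]\le (1-c(q))^{K+1}\lesssim (a/\epsilon)^{c_2}.
\end{align*}
Given the outermost common open circuit $\gamma^\delta$ (well-defined on $\LS$), the domain Markov property implies that the configurations of $\tilde\Lambda^\delta$ and $\hat\Lambda^\delta$ strictly inside the region enclosed by $\gamma^\delta$ are both distributed as the wired random-cluster measure on that region, so we couple them to be identical, completing the construction of $\PRCM^\delta$.

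Finally, \eqref{eqn::asy_identi_arm_event} follows from this coupling: on $\LS\cap\{\partial\Omega^\delta\leftrightarrow\partial B_a(z^\delta)\}$, any open arm from $\partial\Omega^\delta$ to $\partial B_a(z^\delta)$ must cross $\gamma^\delta$, so $\gamma^\delta$ is connected to $\partial\Omega^\delta$ in $\tilde\Lambda^\delta$ and to $\partial\hat\Omega^\delta$ in $\hat\Lambda^\delta$; since the inner configurations agree, $\one_{\{z^\delta\leftrightarrow\partial\Omega^\delta\}}$ and $\one_{\{z^\delta\leftrightarrow\partial\hat\Omega^\delta\}}$ both equal $\one_{\{z^\delta\leftrightarrow\gamma^\delta\}}$ on $\LS$. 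Writing $\tilde P,\hat P$ for the two conditional probabilities, the difference $|\tilde P-\hat P|$ is bounded by the $\PRCM^\delta$-probability of the symmetric difference intersected with $\LS^c$; using the FKG inequality (valid under the marginal conditional measures since conditioning on an increasing event preserves positive association) to negatively correlate the decreasing event $\LS^c$ with the increasing connection event, and then invoking quasi-multiplicativity of one-arm probabilities for $q\in[1,4)$ to obtain $\tilde P\asymp\hat P\asymp$ (inner-arm probability), one concludes $|\tilde P-\hat P|\lesssim(a/\epsilon)^{c_2}\hat P$, yielding the ratio bound. The main technical hurdle is the construction of the monotone joint coupling of the three configurations under the conditioning on the arm event, which requires carefully verifying that conditioning on the increasing arm event preserves the stochastic monotonicity in boundary conditions for the random-cluster model with $q\in[1,4)$.
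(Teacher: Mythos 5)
Your proposal follows essentially the same route as the paper, which adapts Camia's exploration-and-circuit coupling (reference~\cite[Proof of Lemma~2.1]{CamiaPercolationCFT}); the differences are in how you package the intermediate steps, and a few of those steps need repair.

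\textbf{The monotone coupling.} Your claim that the coupling of $\omega_f$ below both $\tilde\Lambda^\delta$ and $\hat\Lambda^\delta$ exists is correct, but the reasoning you give for it is the wrong one and would not survive scrutiny. You cite ``conditioning on the increasing arm event preserves stochastic dominance'' as item (ii); interpreted as ``$\mu_1\succeq\mu_2$ and $A$ increasing imply $\mu_1[\cdot\mid A]\succeq\mu_2[\cdot\mid A]$,'' that statement is \emph{false} in general (simple product-measure counterexamples exist). The correct and much simpler justification is a one-sided chain: for a monotone measure $\mu$ and an increasing event $A$, FKG gives $\mu[\cdot\mid A]\succeq\mu$ directly; combined with domain Markov and monotonicity in boundary conditions one gets $\mu^{\pi_1}_{p_c,q,\Omega^\delta}[\cdot\mid\text{arm}]\big\vert_{B_\epsilon}\succeq\mu^{\pi_1}_{p_c,q,\Omega^\delta}\big\vert_{B_\epsilon}\succeq\mu^{\mathrm{free}}_{p_c,q,B_\epsilon}$; Strassen then gives the coupling. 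You should state it this way to avoid invoking a false principle.

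\textbf{The interior law on $\LS$.} You assert that inside $\gamma^\delta$ both configurations are distributed as the wired random-cluster measure on the enclosed region. That is not quite right: after conditioning on the arm event $\partial\Omega^\delta\leftrightarrow\partial B_a(z^\delta)$, the conditional law of the configuration inside $\gamma^\delta$ (once a circuit connected to $\partial\Omega^\delta$ has been revealed) is the wired random-cluster measure on the interior of $\gamma^\delta$ \emph{further conditioned on} $\gamma^\delta\leftrightarrow\partial B_a(z^\delta)$ inside. This further conditioning is the same for both $\tilde\Lambda^\delta$ and $\hat\Lambda^\delta$, so the conclusion (equal interior laws) survives, but as stated your claim is incorrect.

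\textbf{The coupling construction.} A subtler point: you first invoke a Strassen coupling of the full triple $(\omega_f,\tilde\Lambda^\delta,\hat\Lambda^\delta)$ and afterwards ``couple them to be identical'' inside $\gamma^\delta$. A Strassen coupling already samples the configuration inside $\gamma^\delta$, and there is no clean way to reassign it post hoc while preserving marginals and the monotone property. What Camia actually does (and what the paper leans on) is a \emph{sequential exploration} from $\partial B_\epsilon(z^\delta)$ inward, scale by scale: at each scale reveal the annular configuration jointly, use RSW and FKG to produce, with uniformly positive probability, a common open circuit connected to the already-revealed exterior, and only after the circuit is found sample the interior once for both copies. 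This is where the geometric decay $(a/\epsilon)^{c_2}$ comes from, and the single auxiliary $\omega_f$ you introduce is a convenience for the probability bound but does not by itself produce a well-defined joint law with the right marginals and the identity-inside property. You should present the coupling as an exploration, not Strassen plus a fix.

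\textbf{The final ratio bound.} You invoke ``the FKG inequality (valid under the marginal conditional measures since conditioning on an increasing event preserves positive association).'' Conditioning a monotone measure on an increasing event is \emph{not} known to yield a measure with the FKG lattice property, so that appeal is shaky. The paper avoids it entirely: on $\LS^c$ one bounds the discrepancy by $\PRCM^\delta[\LS^c]$ times $\mu^{1}_{p_c,q,B_a^\delta(z^\delta)}[z^\delta\leftrightarrow\partial B_a^\delta(z^\delta)]$ (using that $\LS$ is measurable on $A_{a,\epsilon}$, domain Markov, and monotonicity in boundary conditions in $B_a$), and then shows the two conditional arm probabilities are each $\gtrsim\mu^{1}_{p_c,q,B_a^\delta(z^\delta)}[z^\delta\leftrightarrow\partial B_a^\delta(z^\delta)]$ by a standard coarse-graining RSW/FKG argument applied to the \emph{unconditioned} measures (then conditioned on an increasing event only uses $\mu[\cdot\mid A]\succeq\mu$). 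Rewriting your last paragraph along these lines would close the gap.
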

\begin{proof}
	One can adopt the strategy in~\cite[Proof of Lemma~2.1]{CamiaPercolationCFT} to show the existence of the coupling $\PRCM^{\delta}$, with the FKG inequality and RSW estimates for critical site percolation replaced by these two properties for the critical random cluster model with cluster weight $q\in[1,4)$, and with the exploration starting from $z^{\delta}$ replaced by the exploration starting from $\partial B_{\epsilon}(z^{\delta})\cap \delta\mathbb{Z}^2$. 
	
	We then show the estimate~\eqref{eqn::asy_identi_arm_event}.  Note that
	\begin{align*}
		\PRCM_{p_c(q),q,\Omega^{\delta}}^{\pi_1}[z^{\delta}\leftrightarrow \partial \Omega^{\delta} \cond \partial \Omega^{\delta}\leftrightarrow \partial B_{a}(z^{\delta}),\; \LS]=&\PRCM_{p_c(q),q,\Omega^{\delta}}^{\pi_1}[z^{\delta}\leftrightarrow \gamma^{\delta}\cond  \partial \Omega^{\delta}\leftrightarrow \partial B_{a}(z^{\delta}),\; \LS],\\
		\PRCM_{p_c(q),q,\hat{\Omega}^{\delta}}^{\pi_2}[z^{\delta}\leftrightarrow \partial \hat{\Omega}^{\delta} \cond  \partial \Omega^{\delta}\leftrightarrow \partial B_{a}(z^{\delta}),\; \LS]=&\PRCM_{p_c(q),q,\hat{\Omega}^{\delta}}^{\pi_2}[z^{\delta}\leftrightarrow \gamma^{\delta}\cond  \partial \Omega^{\delta}\leftrightarrow \partial B_{a}(z^{\delta}),\; \LS].
	\end{align*}
	Let $B^{\delta}_{a}(z^{\delta}):=B_{a}(z^{\delta})\cap\delta\mathbb{Z}^2$. Consequently, thanks to the existence of the coupling $\PRCM^{\delta}$, we have 
	\begin{align}
		\begin{split}\label{eqn::asy_identi_arm_event_aux}
			&\left\vert	\PRCM_{p_c(q),q,\Omega^{\delta}}^{\pi_1}[z^{\delta}\leftrightarrow \partial \Omega^{\delta} \cond \partial \Omega^{\delta}\leftrightarrow \partial B_{a}(z^{\delta})] -\PRCM_{p_c(q),q,\hat{\Omega}^{\delta}}^{\pi_2}[z^{\delta}\leftrightarrow \partial \hat{\Omega}^{\delta} \cond \partial \hat{\Omega}^{\delta}\leftrightarrow \partial B_{a}(z^{\delta})]\right\vert \\
			&\qquad\qquad\qquad\qquad\qquad\leq \PRCM^{\delta}[\mathcal{S}^c]\times \mu^{1}_{p_c(q),q,B_{a}^{\delta}(z^{\delta})}\left[z^{\delta}\leftrightarrow \partial B_{a}^{\delta}(z^{\delta})\right]\\
			&\qquad\qquad\qquad\qquad\qquad\lesssim \left(\frac{\epsilon}{a}\right)^{c_2} \times \mu^{1}_{p_c(q),q,B_{a}^{\delta}(z^{\delta})}\left[z^{\delta}\leftrightarrow \partial B_{a}^{\delta}(z^{\delta})\right],
		\end{split}
	\end{align}
	where $\PRCM^{1}_{p_c(q),q,B_{a}^{\delta}(z^{\delta})}$ is the critical random cluster model on $B_{a}^{\delta}(z^{\delta})$ with the wired boundary condition. 
	Then~\eqref{eqn::asy_identi_arm_event} follows from~\eqref{eqn::asy_identi_arm_event_aux} and a standard application of the FKG inequality and RSW estimates. 
\end{proof}

\begin{proposition} \label{prop::CLE}\textnormal{(\cite[Theorem~1.1]{KemppainenSmirnovFullLimitFKIsing}, ~\cite[Theorem~1.1]{KemppainenSmirnovBoundaryTouchingLoopsFKIsing})}
	Fix a bounded simply connected domain $\Omega$  and suppose that a sequence of admissible medial domains $\Omega^{\delta,\diamond}$ converges to $\Omega$ in the Carath\'{e}odory sense. Consider the critical FK-Ising model on the primal domain $\Omega^{\delta}$ with the wired boundary condition. Let $\Gamma^{\delta}$ be the collection of loops in the loop representation. Then the law of $\Gamma^{\delta}$ converges weakly under the topology induced by $\Dist$ in~\eqref{eqn::LE_metric} to the law of the nested $\mathrm{CLE}_{16/3}$ on $\Omega$; we denote by $\mathbb{P}_{\Omega,\mathrm{w}}$ the latter law. 
\end{proposition}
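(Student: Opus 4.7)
The plan is to follow the Kemppainen--Smirnov strategy in two essentially independent stages: first establish precompactness of the loop ensembles $\{\Gamma^\delta\}_{\delta>0}$ under the metric $\Dist$, and second identify every subsequential limit with nested $\mathrm{CLE}_{16/3}$ by matching with Sheffield's conformal-loop-ensemble exploration tree. Throughout, the wired boundary condition is crucial because it ensures that the loop representation on $\Omega^{\delta,\diamond}$ decomposes canonically into the outermost loops touching $\partial\Omega^\delta$ and the loops strictly inside the complementary domains.

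For tightness, I would verify the Kemppainen--Smirnov condition ($\mathrm{Cond\ G2}$) for each interface: any arc of the interface crossing an annulus $A_{r_1,r_2}(z)$ a large number of times forces a nested stack of alternating primal/dual crossings, whose probability decays polynomially in $r_2/r_1$ by the strong RSW estimates for $q=2$ (Lemma~\ref{lem::RSW}) together with the FKG inequality. This gives equicontinuity of single interfaces; extending to the full loop ensemble uses that any loop of diameter $>\varepsilon$ lies in the exploration subtree initiated at some vertex of a $\delta$-grid and arises from a finite nested sequence of interface branchings, so Prokhorov tightness under $\Dist$ follows from a standard diagonal argument (see the loop-ensemble tightness criterion in~\cite{KemppainenSmirnovRandomCurves}).

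For identification of the limit, I would build the exploration tree on both sides. On the discrete side, starting from a dense countable set $\{z_k\}\subset\Omega$, the wired-loop of $z_k$ in $\Gamma^\delta$ is traced as a single FK-Ising interface in the loop representation, and its law (on the domain boundary) is given by a radial/chordal interface whose Smirnov fermionic observable (the discrete s-holomorphic spinor of~\cite{KemppainenSmirnovFullLimitFKIsing}) converges to its continuous counterpart on compact subsets of $\Omega$. This forces the driving function of the traced interface to converge to $\sqrt{16/3}\,B_t$ (up to the boundary touching), so the loop of $z_k$ converges to a loop of $\mathrm{SLE}_{16/3}$ boundary type. Iterating inside each complementary component (using the domain Markov property of FK-Ising to reduce to the same setup in smaller domains that still converge in the Carath\'eodory sense, which I would justify via Lemma~\ref{lem::inward_coupling} and~\cite[Theorem~1.1]{KemppainenSmirnovBoundaryTouchingLoopsFKIsing}) produces, in the limit, an exploration tree with the branching law of nested $\mathrm{CLE}_{16/3}$. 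Since Sheffield's characterization of $\mathrm{CLE}_\kappa$ for $\kappa\in(8/3,8)$ shows that the branching tree determines the nested loop ensemble uniquely, the subsequential limit must equal nested $\mathrm{CLE}_{16/3}$ and hence the whole sequence $\Gamma^\delta$ converges to it.

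The main obstacle in this program is the boundary behavior: one must control loops that touch or nearly touch $\partial\Omega^\delta$, which is exactly where the Carath\'eodory (not close-Carath\'eodory) hypothesis is weakest. This is handled by the boundary-touching loop estimates of~\cite{KemppainenSmirnovBoundaryTouchingLoopsFKIsing}, which use RSW plus the discrete fermionic observable to show that with high probability no macroscopic loop clusters against a rough piece of $\partial\Omega^\delta$ in a way that fails to converge. A secondary technical point is verifying that the subsequential limit has no extra loops (i.e.\ that the exploration tree exhausts all loops of diameter $\ge\varepsilon$); this follows from the tightness criterion above applied to the exploration residue, together with the fact that $\mathrm{CLE}_{16/3}$ loops themselves satisfy the same quantitative crossing estimates in the limit.
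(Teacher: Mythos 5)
This proposition is not proved in the paper at all: it is imported verbatim from the cited works, \cite[Theorem~1.1]{KemppainenSmirnovFullLimitFKIsing} and \cite[Theorem~1.1]{KemppainenSmirnovBoundaryTouchingLoopsFKIsing}, and the paper's ``proof'' is simply that citation. Your outline is essentially a pr\'ecis of the Kemppainen--Smirnov program that underlies those references (RSW/Condition~G2 tightness, exploration-tree identification via the s-holomorphic observable, boundary-touching loop control), so there is no genuinely different route to compare against; indeed, your own sketch leans on the same two cited theorems for its hardest steps, which makes it a summary of the external proof rather than an independent argument --- exactly the status the paper assigns to this statement.

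A few technical descriptions in your sketch are imprecise and would need correction if this were to be fleshed out. The branch of the discrete exploration tree targeted at an interior point $z_k$ does not converge to a plain radial or chordal $\SLE_{16/3}$: the wired boundary condition together with the branching construction produces in the limit a radial $\SLE_{\kappa}(\kappa-6)$ process with $\kappa=16/3$, and it is this branching $\SLE_{\kappa}(\kappa-6)$ tree, not a family of bare $\SLE_{16/3}$ loops, that Sheffield's construction converts into nested $\CLE_{16/3}$; the phrase ``Sheffield's characterization \ldots determines the nested loop ensemble uniquely'' conflates the construction of $\CLE_{\kappa}$ from the exploration tree with a uniqueness statement that, for $\kappa\in(4,8)$, requires the precise target-independence and reversibility inputs assembled in the cited papers. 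Likewise, the ``wired-loop of $z_k$ traced as a single FK-Ising interface'' is only correct after setting up the discrete branching exploration with its alternating wired/free (Dobrushin-type) conditions as in \cite{KemppainenSmirnovFullLimitFKIsing}; the observable whose convergence drives the identification is attached to that exploration, not directly to the loop of $z_k$. None of this affects the role the proposition plays in the paper, since the paper takes the statement as an external input.
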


\begin{proof}[Proof of Lemma~\ref{lem::limit_one_point}]
	Without loss of generality, we may assume that $z^{\delta}=z=0$. 
	
Choose a decreasing sequence $\{a_k\}_{k=1}^{\infty}$ such that $\lim_{k\to \infty}a_k=0$. For large enough $k$, write
\begin{align}\label{limit_one_point_aux1}
\frac{\mathbb{P}^{\delta}_{\Omega,\mathrm{w}}[0\leftrightarrow \partial \Omega^{\delta}]}{\mathbb{P}^{\delta}_{\mathbb{U},\mathrm{w}}[0\leftrightarrow \partial \mathbb{U}^{\delta}]}= \underbrace{\frac{\mathbb{P}^{\delta}_{\Omega,\mathrm{w}}[\partial B_{a_k}(0)\leftrightarrow \partial \Omega^{\delta}]}{\mathbb{P}^{\delta}_{\mathbb{U},\mathrm{w}}[\partial B_{a_k}(0)\leftrightarrow \partial \mathbb{U}^{\delta}]}}_{T_1^{\delta}}\times \underbrace{\frac{\mathbb{P}^{\delta}_{\Omega,\mathrm{w}}[0\leftrightarrow \partial \Omega^{\delta}\vert\partial B_{a_k}(0)\leftrightarrow \partial \Omega^{\delta}]}{\mathbb{P}^{\delta}_{\mathbb{U},\mathrm{w}}[0\leftrightarrow \partial \mathbb{U}^{\delta}\vert \partial B_{a_k}(0)\leftrightarrow \partial \mathbb{U}^{\delta}]}}_{T_2^{\delta}}.
\end{align}
As explained in~\cite[Section~2.1]{CamiaPercolationCFT}, in the discrete, the events $\{\partial B_{a_k}(0)\leftrightarrow \partial \Omega^{\delta}\}$ and $\{\partial B_{a_k}(0)\leftrightarrow \partial \mathbb{U}^{\delta}\}$ can be expressed in terms of interface loops in the loop representation; moreover, their analog events in the scaling limit defined using the $\mathrm{CLE}$ loops, which we denote by $\{\partial B_{a_k}\leftrightarrow \partial \Omega\}$ and $\{\partial \Omega_{a_k}\leftrightarrow \partial \mathbb{U}\}$, respectively, are continuity events. It then follows from Proposition~\ref{prop::CLE} that 
\begin{align}\label{limit_one_point_aux2}
	\lim_{\delta\to 0} T_1^{\delta} = \frac{\mathbb{P}_{\Omega,\mathrm{w}}[\partial B_{a_k}(0)\leftrightarrow \partial \Omega]}{\mathbb{P}_{\mathbb{U},\mathrm{w}}[\partial B_{a_k}(0)\leftrightarrow \partial \mathbb{U}]}. 
\end{align} 
For the term $T_2^{\delta}$, Lemma~\ref{lem::inward_coupling} implies that there exist two constants $c_2,c_3\in (0,\infty)$ such that 
\begin{align} \label{limit_one_point_aux3}
1-c_3\left(\frac{a_k}{\epsilon}\right)^{c_2}\leq \liminf_{\delta\to 0}T_2^{\delta}\leq \limsup_{\delta\to0 }T_2^{\delta}\leq 1+c_3\left(\frac{a_k}{\epsilon}\right)^{c_2}.  
\end{align}
A standard application of the FKG inequality and RSW estimates implies that there are two constants $c_4,c_5\in (0,\infty)$ such that 
\begin{align*}
	c_4\leq  \frac{\mathbb{P}_{\Omega,\mathrm{w}}[\partial B_{a_k}(0)\leftrightarrow \partial \Omega]}{\mathbb{P}_{\mathbb{U},\mathrm{w}}[\partial B_{a_k}(0)\leftrightarrow \partial \mathbb{U}]}\leq c_5,\quad \text{for all }k\geq 1. 
\end{align*}
So we can choose some subsequence $\{a_{k_j}\}_{j=1}^{\infty}$ such that 
\begin{align}\label{limit_one_point_aux4}
	\lim_{j\to \infty}  \frac{\mathbb{P}_{\Omega,\mathrm{w}}[\partial B_{a_{k_j}}(0)\leftrightarrow \partial \Omega]}{\mathbb{P}_{\mathbb{U},\mathrm{w}}[\partial B_{a_{k_j}}(0)\leftrightarrow \partial \mathbb{U}]}=V, \quad \text{for some }V\in (0,\infty). 
\end{align}
Plugging~\eqref{limit_one_point_aux2},~\eqref{limit_one_point_aux3} and~\eqref{limit_one_point_aux4} into~\eqref{limit_one_point_aux1} gives 
\begin{align*}
	\phi(\Omega;0):=\lim_{\delta\to 0 }\frac{\mathbb{P}^{\delta}_{\Omega,\mathrm{w}}[0\leftrightarrow \partial \Omega^{\delta}]}{\mathbb{P}^{\delta}_{\mathbb{U},\mathrm{w}}[0\leftrightarrow \partial \mathbb{U}^{\delta}]}=V\in (0,\infty),
\end{align*}
which also implies that the value $V$ is independent of the choice of the subsequence $\{a_{k_j}\}_{j=1}^{\infty}$ so that we actually have 
\begin{align} \label{lim_one_point_aux5}
\lim_{k\to \infty} \frac{\mathbb{P}_{\Omega,\mathrm{w}}[\partial B_{a_k}(0)\leftrightarrow \partial \Omega]}{\mathbb{P}_{\mathbb{U},\mathrm{w}}[\partial B_{a_k}(0)\leftrightarrow \partial \mathbb{U}]}=\phi(\Omega;0). 
\end{align}

Now we show that 
\begin{align} \label{lim_one_point_aux6}
	\phi(\Omega;0)=\CR(\Omega;0)^{-1/8}. 
\end{align}
First, assume that $\Omega= R \mathbb{U}$ for some $R>0$. Without loss of generality, we may assume that $R>1$. 
On the one hand, according to~\cite[Proof of Theorem~2]{SchrammSheffieldWilsonConformalRadii} (the first displayed equation in the proof), 
\begin{align*}
	\mathbb{P}_{\mathbb{U},\mathrm{w}} \left[\partial B_{r}(0)\leftrightarrow \partial \mathbb{U}\right] \asymp r^{1/8},\quad \text{as }r\to 0. 
\end{align*}
On the other hand, according to~\eqref{lim_one_point_aux5} and the scale invariance of $\mathrm{CLE}_{16/3}$, we have 
\begin{align} \label{lim_one_point_aux7}
\lim_{r\to 0} \frac{\mathbb{P}_{\mathbb{U},\mathrm{w}}[\partial B_{r/R}(0)\leftrightarrow \partial \mathbb{U}]}{\mathbb{P}_{\mathbb{U},\mathrm{w}}[\partial B_{r}(0)\leftrightarrow \partial\mathbb{U}]}	=\lim_{r\to 0} \frac{\mathbb{P}_{R\mathbb{U},\mathrm{w}}[\partial B_r(0)\leftrightarrow R\partial \mathbb{U}]}{\mathbb{P}_{\mathbb{U},\mathrm{w}}[\partial B_{r}(0)\leftrightarrow \partial\mathbb{U}]}=\phi(R\mathbb{U};0).
\end{align}
For $k> 1$, we write
\begin{align*}
	\mathbb{P}_{\mathbb{U},\mathrm{w}}[\partial B_{1/R^k}(0) \leftrightarrow \partial \mathbb{U}] =\frac{\mathbb{P}_{\mathbb{U},\mathrm{w}}[\partial B_{1/R^k}(0) \leftrightarrow \partial \mathbb{U}] }{\mathbb{P}_{\mathbb{U},\mathrm{w}}[\partial B_{1/R^{k-1}}(0) \leftrightarrow \partial \mathbb{U}] }\frac{\mathbb{P}_{\mathbb{U},\mathrm{w}}[\partial B_{1/R^{k-1}}(0) \leftrightarrow \partial \mathbb{U}] }{\mathbb{P}_{\mathbb{U},\mathrm{w}}[\partial B_{1/R^{k-2}}(0) \leftrightarrow \partial \mathbb{U}] }\cdots \frac{\mathbb{P}_{\mathbb{U},\mathrm{w}}[\partial B_{1/R}(0) \leftrightarrow \partial \mathbb{U}] }{\mathbb{P}_{\mathbb{U},\mathrm{w}}[\partial B_{1}(0) \leftrightarrow \partial \mathbb{U}] },
\end{align*}
where 
\begin{align*}
	\mathbb{P}_{\mathbb{U},\mathrm{w}}[\partial B_{1}(0) \leftrightarrow \partial \mathbb{U}] :=1. 
\end{align*}
Using~\eqref{lim_one_point_aux7} and the convergence of the Ces\`aro mean gives 
\begin{align}\label{lim_one_point_aux8}
\lim_{k\to \infty} \frac{1}{k} \log \mathbb{P}_{\mathbb{U},\mathrm{w}}[\partial B_{1/R^k}(0)\leftrightarrow 0]= \log \phi(R\mathbb{U};0). 
\end{align}
Comparing~\eqref{lim_one_point_aux6} with~\eqref{lim_one_point_aux8} gives 
\begin{align} \label{lim_one_point_9}
	\phi(R\mathbb{U};0)=R^{-1/8}.
\end{align}

Next, we consider the general case. Let $\varphi$ be the conformal map from $\mathbb{U}$ onto $\Omega$ such that $\varphi(0)=0$ and $\varphi'(0)>0$, then we have $\varphi'(0)=\CR(\Omega; 0)$. 
Let $A_{r_k,R_k}(0)$ be the thinnest annulus whose closure contains the symmetric difference\footnote{If $B_{a_k/\varphi'(0)}(0)=\varphi^{-1}(B_{a_k}(0))$, then define $r_k=R_k=a_k/\varphi'(0)$. } between $B_{a_k/\varphi'(0)}(0)$ and $\varphi^{-1}(B_{a_k}(0))$. Note that 
\begin{align}\label{eqn::derivarive}
	\lim_{k\to \infty} \frac{r_k}{a_k}=\lim_{k\to \infty} \frac{R_k}{a_k}= 1/\varphi'(0). 
\end{align}
It follows from~\eqref{lim_one_point_aux5} and the conformal invariance of $\mathrm{CLE}_{16/3}$ that 
\begin{align}\label{lim_one_point_10}
\begin{split}
		\phi(\Omega,0)=&\lim_{k\to \infty} \frac{\mathbb{P}_{\Omega,\mathrm{w}}[\partial B_{a_k}(0)\leftrightarrow \partial \Omega]}{\mathbb{P}_{\mathbb{U},\mathrm{w}}[\partial B_{a_k}(0)\leftrightarrow \partial \mathbb{U}]}=\lim_{k\to \infty} \frac{\mathbb{P}_{\mathbb{U},\mathrm{w}}[\partial \varphi^{-1}(B_{a_k}(0))\leftrightarrow \partial \mathbb{U}]}{\mathbb{P}_{\mathbb{U},\mathrm{w}}[\partial B_{a_k}(0)\leftrightarrow \partial\mathbb{U}]}\\
	\leq & \lim_{k\to \infty} \frac{\mathbb{P}_{\mathbb{U},\mathrm{w}}[\partial B_{R_k}(0)\leftrightarrow \partial \mathbb
		U]}{\mathbb{P}_{\mathbb{U},\mathrm{w}}[\partial B_{a_k}(0)\leftrightarrow \partial\mathbb{U}]}=\underbrace{\lim_{k\to \infty} \frac{\mathbb{P}_{\varphi'(0)\mathbb{U},\mathrm{w}}[\partial B_{a_k}(0)\leftrightarrow \varphi'(0)\partial\mathbb{U}]}{\mathbb{P}_{\mathbb{U},\mathrm{w}}[\partial B_{a_k}(0)\leftrightarrow\partial \mathbb{U}]}}_{T_3^{(k)}}\times \underbrace{\frac{\mathbb{P}_{\varphi'(0)\mathbb{U},\mathrm{w}}\left[\varphi'(0)R_k\partial \mathbb{U}\leftrightarrow \varphi'(0)\partial\mathbb{U}\right]}{\mathbb{P}_{\varphi'(0)\mathbb{U},\mathrm{w}}[\partial B_{a_k}(0)\leftrightarrow \varphi'(0)\partial \mathbb
			U]}}_{T_4^{(k)}}. 
\end{split}
\end{align}
According to~\eqref{lim_one_point_aux5} and~\eqref{lim_one_point_9}, we have
\begin{align}\label{lim_one_point_10_aux1}
	\lim_{k\to \infty}T_3^{(k)}=\phi(\varphi'(0)\mathbb{U};0)=\varphi'(0)^{-1/8}=\CR(\Omega;0)^{-1/8}. 
\end{align}
According to the observation~\eqref{eqn::derivarive} and the fact that the boundary three-arm exponent for the FK-Ising model equals $2$ (see e.g.,~\cite[Theorem~2]{WuPolychromaticArmFKIsing}), which is strictly bigger than $1$, we have
\begin{align}\label{lim_one_point_10_aux2}
	\lim_{k\to \infty}T_4^{(k)}=1.
\end{align}
Plugging~\eqref{lim_one_point_10_aux1} and~\eqref{lim_one_point_10_aux2} into~\eqref{lim_one_point_10} gives
\begin{align}\label{lim_one_point_11}
	\phi(\Omega,0)\leq \CR(\Omega;0)^{-1/8}.
\end{align}
Similarly, one can show that 
\begin{align}\label{lim_one_point_12}
	\phi(\Omega,0)\geq \CR(\Omega;0)^{-1/8}. 
\end{align}
Combining~\eqref{lim_one_point_11} with~\eqref{lim_one_point_12} gives~\eqref{lim_one_point_aux6} and completes the proof. 
\end{proof}

\newcommand{\etalchar}[1]{$^{#1}$}

\end{document}